\pdfoutput=1
\documentclass[leqno]{shinyart}

\usepackage[utf8]{inputenc}

\usepackage{shinybib}

\usepackage{enumitem} 
\usepackage{booktabs}
\usepackage{siunitx}
\sisetup{round-mode=places,round-precision=1,exponent-product=\cdot}
\usepackage{autonum}

\renewcommand{\phi}{\varphi}
\newcommand{\N}{\mathbb{N}}

\newcommand{\R}{\mathbb{R}}
\newcommand{\calF}{\mathcal{F}}
\newcommand{\calG}{\mathcal{G}}
\newcommand{\calE}{\mathcal{E}}

\newcommand{\calS}{\mathcal{S}}

\newcommand{\calM}{{\mathcal{M}}}
\newcommand{\calO}{{\mathcal{O}}}
\newcommand{\scalprod}[1]{\langle #1 \rangle}
\newcommand{\abs}[1]{| #1 |}
\newcommand{\norm}[1]{\| #1 \|}

\DeclareMathOperator{\sign}{\mathrm{sign}}
\DeclareMathOperator{\dom}{\mathrm{dom}}
\DeclareMathOperator{\ran}{\mathrm{ran}}
\DeclareMathOperator{\Id}{\mathrm{Id}}
\DeclareMathOperator{\co}{\mathrm{co}}
\DeclareMathOperator*{\argmin}{\mathrm{arg\,min}}

\newcommand{\prox}{\mathrm{prox}}

\DeclareMathOperator{\divergence}{div}
\DeclareMathOperator{\grad}{grad}

\DeclareMathOperator{\extR}{\overline{\R}}
\newcommand{\defgl}{\mathrel{\mathop:}=}

\newcommand{\dd}{{\mathrm d}}
\renewcommand{\u}{{\bar u}}

\setlist[enumerate]{label={(\roman*)}}
\usepackage{booktabs}
\usepackage{tikz,pgfplots}
\pgfplotsset{compat=newest}
\pgfplotsset{plot coordinates/math parser=false}
\usetikzlibrary{arrows.meta}
\usetikzlibrary{plotmarks}
\usepgfplotslibrary{polar} 

\allowdisplaybreaks

\addbibresource{multibang.bib}

\title{Vector-valued multibang control of differential equations}
\author{%
    Christian Clason\thanks{Faculty of Mathematics, Universität Duisburg-Essen, 45117 Essen, Germany (\email{christian.clason@uni-due.de})}
    \and
    Carla Tameling\thanks{Institute for Mathematical Stochastics, Universität Göttingen, Goldschmidtstr. 7,
    37077 Göttingen, Germany (\email{carla.tameling@mathematik.uni-goettingen.de})}
    \and 
    Benedikt Wirth\thanks{Applied Mathematics, Universität Münster, Einsteinstr. 62,
    48149 Münster, Germany (\email{benedikt.wirth@uni-muenster.de})}
}
\date{September 15, 2017}

\hypersetup{
    pdftitle={Vector-valued multibang control of differential equations},
    pdfauthor={Christian Clason, Carla Tameling, Benedikt Wirth},
    pdfkeywords={multibang control, convex relaxation, Bloch equation, linear elasticity, semismooth Newton method}
}

\begin{document}

\maketitle

\begin{abstract}
    We consider a class of (ill-posed) optimal control problems in which a distributed vector-valued control is enforced to pointwise take values in a finite set $\calM\subset\R^m$.
    After convex relaxation, one obtains a well-posed optimization problem, which still promotes control values in $\calM$.
    We state the corresponding well-posedness and stability analysis and exemplify the results for two specific cases of quite general interest,
    optimal control of the Bloch equation and optimal control of an elastic deformation.
    We finally formulate a semismooth Newton method to numerically solve a regularized version of the optimal control problem and illustrate the behavior of the approach for our example cases.
\end{abstract}

\section{Introduction}

We consider the optimization problem
\begin{equation}\label{eq:optControlPb}
    \min_{u \in U} \frac12\norm{S(u) - z}_Y^2 + \int_{\Omega}  g(u(x))\,\dd x\,, 
\end{equation}
where $\Omega \subset \R^n$ is an open bounded domain, $U=L^2(\Omega;\R^m)$ for some $m\geq 2$, $Y$ is a Hilbert space, $z\in Y$, $S\colon U \to Y$ is a compact and Fr\'{e}chet differentiable (possibly nonlinear) operator, and the pointwise \emph{vector multibang penalty} $g \colon \R^m \to \R \cup \left\{ \infty\right\}$ has a convex polyhedral epigraph and superlinear growth at infinity. This extends the class of scalar problems considered in \cite{CK:2013,CK:2015} to the vector-valued case.
The problem may be viewed either as an optimal control problem, in which we try to choose the control $u$ such that the state $y=S(u)$ comes close to a prescribed desired value $z$,
or as an inverse problem, in which a measurement $z$ has been obtained via a forward operator $S$ from a physical configuration $u$, which we try to recover.
To make the problem well-posed, a regularization typically has to be incorporated, which encodes some a priori knowledge or requirement of $u$.
With the term $\int_{\Omega} g(u(x))\,\dd x$ we make a very particular regularization choice,
and our main interest in this article is the behavior and influence of this choice on the solution,
which we will study by way of example for two different operators $S$ (the solution operator of the Bloch equation and of linearized elasticity) and specific costs $g$ (whose graph is given by a polyhedral cone and a square frustum).
The basic underlying intuition is that this regularization in combination with a quadratic discrepancy term increasingly promotes values of $u$ on lower-dimensional facets, and in particular 
at the vertices, of the graph of $g$, since the linear growth away from a vertex will lead to a comparatively greater increase in the penalty than the corresponding decrease in the discrepancy term. The same mechanism is responsible for the sparsity-promoting property (i.e., the preference for $u=0$) of $L^1$ regularization; it is also related to the fact that in linear optimization, minima are always found at a vertex of the polyhedral domain.

\paragraph{Motivation}
Our regularization choice is motivated by scenarios in which $u$ is required to take values only in a prescribed finite set $\calM\subset\R^m$.
Examples include topology optimization, where the spatial material composition of a (mechanical) structure is optimized
and in which $\calM$ comprises the material parameters of the available material components,
or inverse problems in which the spatial distribution of a few known materials (or, in medical imaging, tissues with known properties) has to be identified.
This leads to the minimization of an energy
\begin{equation}
    \calE^\calM(u)= \frac12\norm{S(u) - z}_Y^2 + \int_{\Omega} \delta_\calM(u(x))\,\dd x
    \qquad\text{with }
\delta_\calM(u)=\begin{cases}0&\text{if }u\in\calM,\\\infty&\text{else.}\end{cases}
\end{equation}
Unfortunately, the energy $\calE^\calM$ is not weakly lower semi-continuous  \cite[Cor.\,2.14]{Braides:2002}
so that the problem is ill-posed (unless the inverse operator $S^{-1}$ were compact into $L^1(\Omega;\R^m)$,
in which case the energy would be strongly coercive in $L^1(\Omega;\R^m)$ and one would only require strong lower semi-continuity):
generically there are no minimizers, and controls $u$ with small energy $\calE^\calM(u)$ will rapidly oscillate between different values in $\calM$.
There are (at least) two possible ways out:
\begin{enumerate}[label=(\roman*)]
    \item
        The first approach adds a penalty of variations of $u$, for instance the total variation seminorm $\norm{u}_{TV}=\int_\Omega\dd|\nabla u|$ or a Mumford--Shah-type regularization functional,
        which has the effect of preventing oscillations and penalizing the interfaces between regions of different values of $u$.
        A disadvantage of this approach is that it quite explicitly regularizes the geometry of the material distribution, which is the sought quantity.
        For instance, such a regularization will lead to rounded-off interfaces that cannot have corners.
    \item
        The second approach considers instead the relaxation (i.e., the lower semi-continuous envelope) of $\calE^\calM$,
        thereby admitting also mixed control values $u(x)\notin\calM$ that represent mixtures of values in $\calM$.
        This is an obvious disadvantage; however, it might be alleviated by adding a convex (to ensure weak lower semi-continuity) cost $\int_\Omega c(u(x))\,\dd x$ that may for instance encode a known preference for a certain material.
        If this is done before relaxation, then mixed control values will no longer have equal costs to pure control values so that the relaxation may again lead to pure control values $u(x)\in\calM$. This has for instance been observed in \cite{CK:2013}.
\end{enumerate}
The additional cost regularization of the latter approach acts on the material amounts rather than the geometry of their distribution
and therefore is worthwhile studying as an alternative to the standard regularization via penalization of interfaces.
Specifically, the relaxation of $\int_\Omega \delta_\calM(u(x))\,\dd x+\alpha \int_\Omega c(u(x))\,\dd x$ for some $\alpha >0$ and $c:\R^m\to\R$ non-negative, strictly convex, and lower semi-continuous, is given by $\int_\Omega g(u(x))\,\dd x$ with
\begin{equation}\label{eq:reg}
    g=g_\infty^{**}\qquad\text{for }g_\infty:=\alpha c+\delta_\calM,
\end{equation}
where the double asterisk denotes the biconjugate or convex envelope.
Those functions $g$ are precisely the ones with convex polyhedral epigraph (since this epigraph is the convex hull of the finitely many points $\{(v,\alpha c(v)))\,:\,v\in\calM\}$, and any function $g$ with convex polyhedral epigraph can be obtained via $c=g/\alpha$ and an appropriate choice of $\calM$), which motivates our problem formulation \eqref{eq:optControlPb}.
While our theoretical statements will hold for any such choice of $c$, the explicit computation of $g$ and the numerical solution will be carried out as in \cite{CIK:2014,CK:2013,CK:2015} for the specific choice
\begin{equation}
    c(v)=\frac12|v|_2^2.
\end{equation}

In the case of a scalar function $u$ (i.e., for $m=1$), this optimization problem reduces to the one considered in \cite{CK:2013};
the difference in the vector-valued case is that now several (or even all) values in $\calM$ can be assigned the same control cost, therefore allowing for multiple equally preferred discrete values. 
Providing explicit and numerically implementable characterizations of the required generalized derivatives is one of the main contributions of this work. Furthermore, we provide an extended analysis of the stability and multibang properties of the optimal controls in the general case.

\paragraph{Model problems}
To illustrate the broad applicability of the proposed approach, we consider as specific examples two different forward operators $S$ and admissible sets $\calM$
(the analysis in \crefrange{sec:existence}{sec:penalty} will be independent of these models, though, beyond some general assumptions).

The first example follows \cite{Spincontrol:15}, where the authors try to drive a collection of spin systems using external electromagnetic fields to a desired spin state in the context of NMR spectroscopy or tomography.
The hardware here only allows a discrete set of control values (the radiofrequency pulse phases and amplitudes).
The underlying model is given by the Bloch equation in a rotating reference frame without relaxation (see \cite{Epstein:2006} for an introduction), which relates the magnetization vector $\mathbf M:[0,T]\to\R^3$ and the applied magnetic field $\mathbf B:[0,T]\to \R^3$ via the bilinear differential equation
\begin{equation}
    \frac{\dd}{\dd t}{\mathbf{M}}(t) = \mathbf{M}(t) \times \mathbf B(t)\,,
    \qquad\mathbf{M}(0)=\mathbf{M}_0.
\end{equation}
The goal is to shift the magnetization vector from the initial state $\mathbf{M}_0$ (e.g., aligned to a strong external field) to a desired state $\mathbf{M}_d$ (e.g., orthogonal to the external field) at time $T$.
The control $u\in L^2((0,T);\R^2)$ enters the equation as $\mathbf{B}(t) = (u_1(t),u_2(t),\omega)$, where $\omega$ is a fixed resonance frequency (which coincides with the rotation frequency of the domain),
and thus the (nonlinear) operator $S$ maps the control $u$ onto the magnetization vector $\mathbf M(T)$ at time $T$.
For details, see \cref{sec:BlochStateEq}.

The second example deals with linearized elasticity as the most basic model of coupled PDEs as state equations, i.e., we consider $S$ to be the solution operator of the elliptic problem
\begin{equation}
    \left\{\begin{aligned}
            -2\mu \divergence \epsilon(y) - \lambda \grad \divergence y &= u \text{  in } \Omega, \\ 
            y &= 0 \text{  on }  \Gamma, \\
            (2\mu \epsilon(y) + \lambda \divergence y) n &= 0 \text{  on }  \partial\Omega\setminus\Gamma
    \end{aligned}\right.
\end{equation}
with distributed control $u$,
see \cref{sec:elasticityStateEq} for details.

\bigskip

Regarding the admissible set $\calM$, we consider for the case of the Bloch equation -- again following \cite{Spincontrol:15} -- radially distributed control values together with the origin, i.e.,
\begin{equation}
\calM = \left\{ \left(\begin{smallmatrix} 0 \\ 0 \end{smallmatrix}\right), \left(\begin{smallmatrix} \omega_0 \cos\theta_1 \\ \omega_0 \sin \theta_1 \end{smallmatrix}\right), \dots, \left(\begin{smallmatrix} \omega_0 \cos\theta_M \\ \omega_0 \sin \theta_M \end{smallmatrix}\right)\right\} 
\end{equation}
for a fixed amplitude $\omega_0>0$ and $M>2$ equi-distributed phases
\begin{equation}
    0\leq\theta_1<\ldots<\theta_M<2\pi\,.
\end{equation}
In this example, all admissible control values apart from $0$ have the same magnitude; it also provides a link to classical sparsity promotion and allows a closed-form treatment of an arbitrary number of such states.

For the case of linearized elasticity, we consider in addition an admissible set containing control values of different magnitude but not the origin. For the sake of an example, we make the concrete choice
\begin{equation}
\calM = \left\{ \begin{pmatrix} 1 \\ 1 \end{pmatrix}, \begin{pmatrix} 1 \\ -1 \end{pmatrix}, \begin{pmatrix} -1 \\ 1 \end{pmatrix}, \begin{pmatrix} -1 \\ -1 \end{pmatrix}, \begin{pmatrix} 2 \\ 2 \end{pmatrix}, \begin{pmatrix} 2 \\ -2 \end{pmatrix}, \begin{pmatrix} -2 \\ 2 \end{pmatrix}, \begin{pmatrix} -2 \\ -2 \end{pmatrix}\right\} \,.
\end{equation}
Beyond illustrating the general procedure, these two examples are meant as useful prototypes that should be directly applicable.

\paragraph{Related work}
Convex relaxation of problems lacking weak lower semi-continuity has a long history; here we only mention the monograph \cite{Ekeland:1999a}. In the context of optimal control of partial differential equations, convex relaxation of discrete control constraints was discussed in \cite{CK:2013,CK:2015}; a similar approach was applied to switching control in \cite{CIK:2014}. Special cases were treated much earlier for scalar controls. In particular, if $\calM$ contains only two points, problem \eqref{eq:optControlPb} coincides with a (regularized) bang-bang control problem; see, e.g., \cite{bergounioux_optimality_1996,tiba_optimal_1990,troltzsch_minimum_1979}. For $\calM=\{0\}$, the relaxation reduces to the well-known $L^1$ norm used to promote sparse controls; see, e.g., \cite{stadler_elliptic_2007,casas_optimality_2012,ito_optimal_2014}. 

There is a vast literature concerning pulse design in magnetic resonance imaging and spectroscopy via optimal control of the Bloch equation, e.g.,
\cite{conolly_optimal_1986,smith_solvent-suppression_1991,rosenfeld_design_1996,khaneja_optimal_2005,xu_designing_2008,gershenzon_design_2009,grissom_fast_2009,skinner_new_2012}. A mathematical treatment of this problem can be found in, e.g., \cite{bonnard_geometric_2014}. Numerical methods for the computation of optimal pulses are based on conjugate gradient methods (see, e.g., \cite{mao_selective_1986}), Krotov methods \cite{vinding_fast_2012}, quasi-Newton and Newton methods with approximate second derivatives \cite{anand_designing_2012} and Newton methods using exact second derivatives computed via the adjoint approach \cite{Aigner:2015} (which was also the basis of the winning approaches in the 2015 ISMRM RF Pulse Design Challenge \cite{ISMRMChallenge}). The latter is the basis for the numerical treatment in this work.

To the best of our knowledge, there is so far only a very limited number of works dealing with the design of discrete-valued pulses, which is of interest since the hardware often allows only a finite set $\calM$ of pulses \cite{cruickshank_kilowatt_2009,prigl_theoretical_1996}.
In \cite{Spincontrol:15}, this problem is treated via an extension of the approach from \cite{khaneja_optimal_2005} together with a quantization of a continuous control field obtained via standard optimization methods. 

\paragraph{Organization}
\Cref{sec:existence} provides the abstract convex analysis framework, including existence of solutions of the optimal control problem, necessary optimality conditions, as well as an appropriate regularization for numerical purposes.
\Cref{sec:stability} then derives stability results based on rather general assumptions on the state operator and the multibang penalty.
\Cref{sec:penalty} gives an explicit characterization of the convex analysis framework for the specific examples of the multibang penalty used in this work, while \cref{sec:stateEq} gives more detail about the model state equations and in particular verifies for them the previously exploited assumptions.
\Cref{sec:semiSmoothNewton} discusses the numerical solution using a semismooth Newton method. Finally, \cref{sec:examples} presents and discusses illustrative numerical examples for both model problems.

\section{Convex analysis framework}\label{sec:existence}

To obtain existence of minimizers and numerically feasible optimality conditions, we follow the general framework of \cite{CK:2015} (stated there for the scalar case), which we briefly summarize in this section and adapt to the vector-valued case.
We recall that $U=L^2(\Omega;\R^m)$ for some bounded open domain $\Omega\subset\R^n$ and $m\geq 2$, $Y$ is a Hilbert space, and
\begin{align}
    \calF&\colon U \to \R \cup \left\{ \infty \right\}, \quad u \mapsto \tfrac12\norm{S(u) - z}_Y^2\,,\\
    \calG&\colon U \to \R \cup \left\{ \infty \right\}, \quad u \mapsto \textstyle\int_{\Omega} g(u(x))\, \dd x\,,
\end{align}
for $g:\R^m\to\R\cup\{\infty\}$ proper, convex, lower semi-continuous with $\dom g = \co\calM$ (the convex hull of $\calM$) for some finite set $\calM\subset\R^m$.
For the operator $S$ we will require
\begin{enumerate}[label=(\textsc{h}\arabic*),ref={\normalfont(\textsc{h}\arabic*)}]
    \item\label{enm:weakWeakContinuity}
        weak-to-weak continuity, i.e.,\quad
        $u_i\rightharpoonup u\text{ in }U
        \quad\Rightarrow\quad
        S(u_i)\rightharpoonup S(u)\text{ in }Y$,
    \item\label{enm:FrechetDifferentiability}
        Fréchet differentiability.
\end{enumerate}
In the following, $\calG^*:U^*\cong U \to\R\cup\{\infty\}$ denotes the Legendre--Fenchel conjugate of $\calG$, and $S'(u)^*:Y\to U$ denotes the (Hilbert-space) adjoint of the Fréchet derivative of $S:U\to Y$.

We now consider the problem
\begin{equation}\label{eq:formal_prob}
    \min_{u\in U} \calE(u)\quad\text{ for }\calE(u):=\calF(u)+\calG(u)\,.
\end{equation}
The following statements are analogous to \cite[Prop.~2.1, Prop.~2.2]{CK:2015} for the vector-valued case.
\begin{proposition}[existence of minimizers] \label{thm:existence}
    Let $S$ satisfy \ref{enm:weakWeakContinuity}. Then there exists a solution $\bar u\in U$ to \eqref{eq:formal_prob}.
\end{proposition}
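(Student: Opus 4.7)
The plan is to run the direct method of the calculus of variations. The key observation is that the constraint $\dom g = \co\calM$ with $\calM$ finite makes the effective domain of $\calG$ particularly rigid: any $u$ with $\calG(u)<\infty$ must satisfy $u(x)\in\co\calM$ a.e., and since $\co\calM$ is a bounded subset of $\R^m$, such $u$ are automatically bounded in $L^\infty(\Omega;\R^m)$ and hence in $U=L^2(\Omega;\R^m)$.

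First I would verify that $\calE$ is proper: picking any constant function $u\equiv v$ with $v\in\calM$ gives $\calG(u)=|\Omega|g(v)<\infty$ and $\calF(u)<\infty$ since $S(u)\in Y$. Since $\calE\geq 0$, the infimum $\alpha\defgl\inf_{u\in U}\calE(u)$ is finite. Choose a minimizing sequence $(u_k)\subset U$ with $\calE(u_k)\to\alpha$. For $k$ large enough, $\calG(u_k)<\infty$ forces $u_k(x)\in\co\calM$ a.e.; hence $\norm{u_k}_{L^\infty(\Omega;\R^m)}\leq\max_{v\in\co\calM}|v|$, so $(u_k)$ is bounded in $U$. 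Extract a subsequence (not relabeled) with $u_k\rightharpoonup\bar u$ in $U$.

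Next I would establish weak lower semi-continuity of both summands at $\bar u$. For $\calG$, convexity of $g$ together with its proper lower semi-continuity implies that $\calG$ is convex and strongly lower semi-continuous on $U$ (the latter via Fatou applied to any a.e.-convergent subsequence of a strongly convergent one, after passing through Mazur if needed); convex strongly lsc functionals on a Hilbert space are weakly lsc, so $\calG(\bar u)\leq\liminf_k\calG(u_k)$. For $\calF$, assumption \ref{enm:weakWeakContinuity} gives $S(u_k)\rightharpoonup S(\bar u)$ in $Y$, hence $S(u_k)-z\rightharpoonup S(\bar u)-z$, and weak lower semi-continuity of the Hilbert-space norm yields $\calF(\bar u)\leq\liminf_k\calF(u_k)$.

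Adding the two bounds gives $\calE(\bar u)\leq\liminf_k\calE(u_k)=\alpha$, so $\bar u$ is a minimizer. I do not expect genuine obstacles here; the only point worth being careful about is the weak lsc of $\calG$, which relies crucially on convexity of $g$ rather than on any coercivity (coercivity of $\calE$ on $U$ is given for free by the boundedness of $\co\calM$). Superlinear growth of $g$ at infinity, mentioned in the introduction, is not needed for existence but would be the substitute argument if $\dom g$ were unbounded.
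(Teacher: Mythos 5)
Your proposal is correct and follows essentially the same route as the paper's proof: a direct method argument, with boundedness of the minimizing sequence coming from the constraint $\dom g=\co\calM$, weak lower semicontinuity of $\calG$ from convexity of $g$, and weak lower semicontinuity of $\calF$ from \ref{enm:weakWeakContinuity} together with weak lsc of the norm. The paper states these steps more tersely (in particular it does not spell out the properness check or the convex-plus-strong-lsc argument), but there is no substantive difference.
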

\begin{proof}
    Consider a minimizing sequence $\{u_i\}_{i\in\N}$.
    Since $g$ is infinite outside of $\co\calM$, we know that $\|u_i\|_{L^\infty(\Omega)}$ is uniformly bounded so that we may extract a subsequence, again denoted by $\{u_i\}_{i\in\N}$, weakly converging  in $U$ to some $\bar u\in U$.
    Now $\int_\Omega g(u(x))\,\dd x$ is sequentially weakly lower semi-continuous by the convexity of $g$,
    while property \ref{enm:weakWeakContinuity} implies weak convergence $S(u_i)\rightharpoonup S(\bar u)$ so that
    \begin{equation}
        \frac12\norm{S(\bar u) - z}_Y^2 + \int_{\Omega}  g(\bar u(x))\,\dd x
        \leq\liminf_{i\to\infty}\frac12\norm{S(u_i) - z}_Y^2 + \int_{\Omega}  g(u_i(x))\,\dd x.
    \end{equation}
    Hence $\bar u$ must be a minimizer.
\end{proof}

\begin{proposition}[optimality conditions] \label{thm:optCond}
    Let $S$ satisfy \ref{enm:FrechetDifferentiability} and let $\bar u\in U$ be a local minimizer of \eqref{eq:formal_prob}. Then there exists a $\bar p \in U$ satisfying
    \begin{equation}
        \label{eq:optsys}
        \left\{\begin{aligned}
                -\bar p &= \calF'(\bar u)=S'(\bar u)^*(S(\bar u)-z),\\
                \bar u &\in \partial\calG^*(\bar p).
        \end{aligned}\right.
    \end{equation}
\end{proposition}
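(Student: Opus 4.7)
The plan is to derive the optimality system from the standard first-order condition for the sum of a Fréchet-differentiable functional and a proper convex lower semi-continuous functional. I would proceed in three steps.

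First, I would verify that $\calF$ is Fréchet differentiable with $\calF'(\bar u) = S'(\bar u)^*(S(\bar u)-z)$. This follows by the chain rule from assumption \ref{enm:FrechetDifferentiability} and the smoothness of the squared-norm map $y\mapsto\tfrac12\|y-z\|_Y^2$, whose derivative at $y$ is the Riesz representative of $y-z$. This pins down the first line of \eqref{eq:optsys} once we set $\bar p=-\calF'(\bar u)$.

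Second, I would show $\bar p\in\partial\calG(\bar u)$ by a direct variational argument that avoids any subdifferential sum rule (since $\calF$ need not be convex). Pick any $v\in U$ with $\calG(v)<\infty$ and, for $t\in(0,1)$, set $u_t=\bar u+t(v-\bar u)$. Convexity of $\calG$ gives $\calG(u_t)\le(1-t)\calG(\bar u)+t\calG(v)$, and local optimality of $\bar u$ yields $\calF(u_t)-\calF(\bar u)\ge \calG(\bar u)-\calG(u_t)\ge -t(\calG(v)-\calG(\bar u))$ for all sufficiently small $t>0$. Dividing by $t$ and passing to the limit using Fréchet differentiability of $\calF$ gives
\begin{equation}
    \scalprod{\calF'(\bar u),v-\bar u}\ge -(\calG(v)-\calG(\bar u))\,,
\end{equation}
i.e., $-\calF'(\bar u)\in\partial\calG(\bar u)$. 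For $v\notin\dom\calG$ the inequality holds trivially, so this characterizes $\bar p=-\calF'(\bar u)$ as a subgradient.

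Third, I would invoke the Fenchel identity: since $g$ is proper, convex, and lower semi-continuous on $\R^m$, so is $\calG$ on $U$, and hence $\bar p\in\partial\calG(\bar u)\Leftrightarrow \bar u\in\partial\calG^*(\bar p)$. This gives the second line of \eqref{eq:optsys}. The only mild obstacle is making sure local (rather than global) minimality suffices for the perturbation argument; that is handled by restricting $t$ so that $u_t$ stays in the neighborhood where $\bar u$ is optimal, which is possible because $u_t\to\bar u$ in $U$ as $t\to 0$ for any fixed $v\in U$.
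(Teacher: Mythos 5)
Your proof is correct and follows essentially the same route as the paper: the same directional perturbation $u_t=\bar u+t(v-\bar u)$, the same use of convexity of $\calG$ to avoid a sum rule, and the same Fenchel duality step $\bar p\in\partial\calG(\bar u)\Leftrightarrow\bar u\in\partial\calG^*(\bar p)$. You are slightly more explicit than the paper in verifying the formula for $\calF'$ and in noting that local minimality suffices because $u_t\to\bar u$ as $t\to 0$, but the argument is the same.
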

\begin{proof}
    Abbreviate $u_t=\bar u+t(u-\bar u)$ for arbitrary $t>0$ and $u\in U$.
    Due to the optimality of $\bar u$ we have
    \begin{equation}
        0\leq[\calF(u_t)+\calG(u_t)]-[\calF(\bar u)+\calG(\bar u)]\,.
    \end{equation}
    Dividing by $t$ and rearranging, we arrive at
    \begin{equation}
        0\leq\frac{\calF(u_t)-\calF(\bar u)}t+\frac{\calG(u_t)-\calG(\bar u)}t
        \leq\frac{\calF(u_t)-\calF(\bar u)}t+\frac{(1-t)\calG(\bar u)+t\calG(u)-\calG(\bar u)}t\,,
    \end{equation}
    where in the second inequality we used the convexity of $\calG$.
    Taking the limit $t\to0$ and setting $\bar p=-\calF'(\bar u)$, we arrive at
    \begin{equation}
        0\leq\langle-\bar p,u-\bar u\rangle+\calG(u)-\calG(\bar u)\,.
    \end{equation}
    As this holds for all $u\in U$, we obtain $\bar p\in\partial\calG(\bar u)$,
    which is equivalent to $\bar u\in\partial\calG^*(\bar p)$.
\end{proof}

Note that 
\begin{equation}\label{eq:subdiff_pointwise}
    \partial\calG^*(p)=\{u\in U:u(x)\in\partial g^*(p(x))\text{ for a.e. }x\in\Omega\}\,.
\end{equation}
It is readily seen that for $g$ chosen as in \eqref{eq:reg}, $g^*$ is piecewise affine and thus $\partial g^*$ is single-valued in each affine region, the values being precisely the elements of $\calM$ (see \cref{sec:penalty}).
More precisely, for each $u\in\calM$ there is an open convex polyhedron $Q(u)\subset\R^m$ such that $\R^m=\bigcup_{u\in\calM}\overline{Q(u)}$ and $\partial g^*(q)=\{u\}$ for all $q\in Q(u)$.
This property suggests that solutions to \eqref{eq:optsys} generically satisfy $u\in\calM$ almost everywhere, which will be exploited in \cref{sec:stability} to derive corresponding stability properties of optimal controls.

\bigskip

In order to apply a semismooth Newton method in function spaces, we need to apply a regularization. Here we replace the subdifferential $\partial\calG^*(\bar p)$ by its single-valued Yosida approximation
\begin{equation}
    (\partial\calG^*)_\gamma(p) =\frac1\gamma\left(p - \prox_{\gamma\calG^*}(p)\right)
\end{equation}
for some $\gamma>0$ and the proximal mapping
\begin{equation}
    \prox_{\gamma\calG^*}(p) = \left(\Id + \gamma\partial\calG\right)^{-1}(p) = \argmin_{\tilde p\in U}\frac1{2\gamma}\norm{\tilde p-p}_{U}^2 + \calG^*(\tilde p),
\end{equation}
i.e., we consider instead of \eqref{eq:optsys}  for $\gamma>0$ the regularized optimality conditions
\begin{equation}\label{eq:regsystem}
    \left\{\begin{aligned}
            -p_\gamma &= \calF'(u_\gamma),\\
            u_\gamma &= (\partial\calG^*)_\gamma(p_\gamma).
    \end{aligned}\right.
\end{equation}
As we will show in \cref{sec:semiSmoothNewton}, $H_\gamma:=(\partial\calG^*)_\gamma$ is Newton-differentiable, thereby allowing the use of semi-smooth Newton methods.
The Yosida approximation $(\partial\calG^*)_\gamma$ is linked to the Moreau envelope 
\begin{equation}
    (\calG^*)_\gamma(p)=\min_{\tilde p\in U}\frac1{2\gamma}\norm{\tilde p-p}_{U}^2+\calG^*(\tilde p)
\end{equation}
via $(\partial \calG^*)_\gamma = \partial(\calG^*)_\gamma$,
see, e.g., \cite[Prop.~12.29]{Bauschke:2011}, which justifies the term \emph{Moreau--Yosida regularization} (of $\calG^*$). Furthermore, from \cite[Prop.~13.21]{Bauschke:2011} we have that
\begin{equation}
    ((\calG^*)_\gamma)^*(u) = \calG(u) + \frac\gamma2\norm{u}_U^2,
\end{equation}
and hence \eqref{eq:regsystem} coincides with the necessary and sufficient optimality conditions for the strictly convex minimization problem
\begin{equation}
    \label{eq:problem_reg}
    \min_{u\in U}\calE_\gamma(u)\quad\text{ for }\calE_\gamma(u)= \calF(u) + \calG(u) + \frac\gamma2 \norm{u}_{U}^2.
\end{equation}
By the same arguments as in the proof of \cref{thm:existence}, we obtain the existence of a minimizer $u_\gamma\in U$ and thus of a corresponding $p_\gamma = -\calF(u_\gamma)\in U$.
\begin{remark}
    An alternative regularization leading to Newton-differentiability is to instead apply the Yosida approximation to the equivalent subdifferential inclusion $\bar p\in\partial\calG(\bar u)$ in \eqref{eq:optsys}. This would correspond to replacing $\calG$ in \eqref{eq:formal_prob} with its (Fréchet-differentiable) Moreau envelope $\calG_\gamma:u\mapsto \min_{\tilde u\in U} \frac1{2\gamma}\norm{\tilde u-u}_U^2+\calG(\tilde u)$, thus smoothing out the non-differentiability that is responsible for the structural properties of the penalty. 
    In contrast, our regularization does not remove the non-differentiability but merely makes the functional (more) strongly convex so that the structural features of the multibang regularization are preserved.
\end{remark}

The following statement is a slight generalization of \cite[Prop.~4.1]{CK:2015}.
\begin{proposition}[limit for vanishing regularization]
    Let $S$ satisfy \ref{enm:weakWeakContinuity}. Then $\Gamma\text{-}\lim_{\gamma\to0}\calE_\gamma=\calE$ with respect to weak convergence in $U$.
    As a consequence, any sequence $u_{\gamma_n}$ of global minimizers to \eqref{eq:problem_reg} for $\gamma_n\to0$ contains a subsequence converging weakly in $U$ to a global minimizer of \eqref{eq:formal_prob}.
    Moreover, this convergence is strong.
\end{proposition}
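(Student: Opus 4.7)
The plan is to verify the two standard conditions for $\Gamma$-convergence with respect to weak convergence in $U$, then use equi-coercivity together with a Hilbert-space argument for the strong convergence refinement.

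For the liminf inequality, I would take an arbitrary sequence $u_n\rightharpoonup u$ in $U$ with $\gamma_n\to 0$ and split $\calE_{\gamma_n}(u_n)$ into three parts. The term $\calF(u_n)=\frac12\|S(u_n)-z\|_Y^2$ is weakly lower semi-continuous because hypothesis \ref{enm:weakWeakContinuity} yields $S(u_n)\rightharpoonup S(u)$ in $Y$ and the squared norm is weakly lsc. The term $\calG(u_n)$ is weakly lsc because $g$ is proper, convex, and lsc, exactly as used in the proof of \cref{thm:existence}. The term $\frac{\gamma_n}{2}\|u_n\|_U^2$ is non-negative and trivially satisfies $\liminf\geq 0$. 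Summing gives $\calE(u)\leq\liminf_n\calE_{\gamma_n}(u_n)$.

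For the recovery sequence, I would simply take the constant sequence $u_n\equiv u$. Then $u_n\rightharpoonup u$ and $\calE_{\gamma_n}(u)=\calE(u)+\frac{\gamma_n}{2}\|u\|_U^2\to\calE(u)$ because $\|u\|_U<\infty$ is fixed. This establishes $\Gamma$-convergence. To conclude the convergence-of-minimizers statement, I would invoke equi-coercivity: any $u_{\gamma_n}$ with $\calE_{\gamma_n}(u_{\gamma_n})<\infty$ satisfies $u_{\gamma_n}(x)\in\dom g=\co\calM$ a.e., so $\{u_{\gamma_n}\}$ is uniformly bounded in $L^\infty(\Omega;\R^m)$ and hence in $U$, allowing extraction of a weakly convergent subsequence. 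The fundamental theorem of $\Gamma$-convergence then yields that the weak limit $\bar u$ is a global minimizer of $\calE$ and that $\calE_{\gamma_n}(u_{\gamma_n})\to\calE(\bar u)$.

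The most interesting step is upgrading weak to strong convergence along the subsequence. Here I would exploit that $u_{\gamma_n}$ minimizes $\calE_{\gamma_n}$, so testing against $\bar u$ gives
\begin{equation}
    \calF(u_{\gamma_n})+\calG(u_{\gamma_n})+\tfrac{\gamma_n}{2}\|u_{\gamma_n}\|_U^2 \leq \calF(\bar u)+\calG(\bar u)+\tfrac{\gamma_n}{2}\|\bar u\|_U^2.
\end{equation}
Since $\bar u$ is a global minimizer of $\calE$, the bracket $\calE(\bar u)-\calE(u_{\gamma_n})$ is non-positive, whence after rearrangement and division by $\gamma_n/2$ one obtains $\|u_{\gamma_n}\|_U\leq\|\bar u\|_U$ for every $n$. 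Combined with the weak lower semi-continuity of the Hilbert-space norm, this forces $\|u_{\gamma_n}\|_U\to\|\bar u\|_U$, and in a Hilbert space this together with $u_{\gamma_n}\rightharpoonup\bar u$ yields $u_{\gamma_n}\to\bar u$ strongly in $U$ via the identity $\|u_{\gamma_n}-\bar u\|_U^2=\|u_{\gamma_n}\|_U^2-2\langle u_{\gamma_n},\bar u\rangle+\|\bar u\|_U^2$. The subtle point worth emphasizing is that one does not need strict convexity of $\calE$ or uniqueness of $\bar u$; the argument uses only that the quadratic regularizer is minimized along the selected subsequence among all minimizers of the limit problem.
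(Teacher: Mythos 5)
Your proposal follows the same route as the paper: the liminf inequality via weak lower semi-continuity of $\calE$ and of the Hilbert norm, the constant recovery sequence, equi-coercivity from $\dom g = \co\calM$, and the strong-convergence upgrade by sandwiching $\calE(u_{\gamma_n})+\tfrac{\gamma_n}{2}\|u_{\gamma_n}\|_U^2 \le \calE_{\gamma_n}(\bar u)$ against the optimality of $\bar u$ for $\calE$ to obtain $\|u_{\gamma_n}\|_U\le\|\bar u\|_U$ and then using weak convergence plus norm convergence in a Hilbert space. The only difference is cosmetic: you spell out the identity $\|u_{\gamma_n}-\bar u\|_U^2=\|u_{\gamma_n}\|_U^2-2\langle u_{\gamma_n},\bar u\rangle+\|\bar u\|_U^2$ and add the (correct, and worth noting) remark that no uniqueness of $\bar u$ is needed, while the paper compresses this into a two-sided chain of inequalities.
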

\begin{proof}
    For the $\Gamma$-limit, we first have to show that for any sequence $\gamma_n\to0$ and any weakly converging sequence $u_n\rightharpoonup u$ we have $\liminf_{\gamma_n\to0}\calE_{\gamma_n}(u_{\gamma_n})\geq\calE(u)$,
    which is an immediate consequence of the sequential weak lower semi-continuity of $\calE$ (shown in the proof of \cref{thm:existence}) and of $\|\cdot\|_{U}$.
    Second, the required recovery sequence is just the constant sequence, $u_n=u$.
    Furthermore, minimizers of $\calE_\gamma$ are uniformly bounded in $U$, since $g$ is infinite outside the convex hull $\co\calM$,
    which together with the $\Gamma$-convergence is well-known to imply the weak convergence in $U$ of minimizers of $\calE_\gamma$ to minimizers of $\calE$.
    Finally, for such a weakly converging sequence $u_{\gamma_n}\rightharpoonup u$ of minimizers of $\calE_{\gamma_n}$ we have
    \begin{equation}
        \calE(u_{\gamma_n})+\frac{\gamma_n}2\norm{u_{\gamma_n}}_{U}^2\leq\calE_{\gamma_n}(u)\leq\calE(u_{\gamma_n})+\frac{\gamma_n}2\norm{u}_{U}^2,
    \end{equation}
    which implies $\norm{u}_{U}\geq\norm{u_{\gamma_n}}_{U}$ so that the convergence $u_{\gamma_n}\to u$ is actually strong.
\end{proof}

\section{Stability properties of multibang controls}\label{sec:stability}

We now discuss stability properties of the controls by exploiting the special structure of the optimality conditions for the multibang control problem.
In particular we consider  in what sense the controls converge as the target state converges; what can be said about controls with values in $\calM$; and when exact controls (which achieve the target state) can be retrieved by the optimization.
To keep the notation concise, we set 
\begin{equation}
    \calE^z(u):=\frac12\norm{S(u) - z}_Y^2 + \int_{\Omega}  g(u(x))\,\dd x\,,
\end{equation}
where $g:\R\to\extR$ is again proper, convex, and weakly lower semi-continuous with $\dom g = \co \calM$.

\subsection{Stability with respect to target perturbations}

First, we examine how perturbations of the target $z$ influence the minimizer of \eqref{eq:formal_prob}.
We will see that as $z_n$ converges strongly to $z$ in $Y$, the corresponding minimizers converge in $U$ in the weak sense.
Strong convergence cannot be expected in general due to worst-case scenarios in which the limit minimizer $\bar u$ has a nonempty ``singular arc''
\begin{equation}
    \calS_{\bar u}=\{x\in\Omega\,|\,\bar u(x)\notin\calM\}\,,
\end{equation}
i.e., the region in which $\bar u$ does not attain any of the distinguished values $\calM$.
However, away from that singular arc one obtains strong convergence and, as a consequence, controls in $\calM$ even for perturbed targets.
In this section we use the following additional hypotheses on $S$
(which will be shown to hold for our model forward operators in \cref{sec:stateEq}).
\begin{enumerate}[label=(\textsc{h}\arabic*),ref={\normalfont(\textsc{h}\arabic*)}]\setcounter{enumi}{2}
    \item\label{enm:compactness}
        $S:U\to Y$ is compact.
    \item\label{enm:adjointConvergence}
        For some Banach space $V\hookleftarrow U$ with $V^*\hookrightarrow L^\infty(\Omega;\R^m)$, we have
        \begin{equation}
            \lim_{\tilde u\rightharpoonup u\text{ in }U}\norm{[S'(\tilde u)-S'(u)]^*y}_{V^*}=0\quad\text{ for all }y\in Y\,.
        \end{equation}
\end{enumerate}

\begin{proposition}[$\Gamma$-convergence of objective functional]\label{thm:GammaConvergence}
    Let $z_n\to z$ in $Y$ and $S$ satisfy \ref{enm:weakWeakContinuity}. Then with respect to weak convergence in $U$, we have
    \begin{equation}
        \Gamma\text{-}\lim_{n\to\infty}\calE^{z_n}=\calE^z\,.
    \end{equation}
\end{proposition}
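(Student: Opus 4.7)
The plan is to verify the two standard conditions of $\Gamma$-convergence with respect to the weak topology on $U$: a liminf inequality along weakly convergent sequences, and the existence of a recovery sequence for each target point $u\in U$.

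For the liminf inequality, I would take an arbitrary sequence $u_n\rightharpoonup u$ in $U$ and show $\liminf_{n\to\infty}\calE^{z_n}(u_n)\geq\calE^z(u)$. Since $S$ satisfies \ref{enm:weakWeakContinuity}, we have $S(u_n)\rightharpoonup S(u)$ in $Y$, and combined with the strong convergence $z_n\to z$ in $Y$ we obtain $S(u_n)-z_n\rightharpoonup S(u)-z$. The weak lower semi-continuity of the Hilbert-space norm then gives
\begin{equation}
    \tfrac12\norm{S(u)-z}_Y^2\leq\liminf_{n\to\infty}\tfrac12\norm{S(u_n)-z_n}_Y^2.
\end{equation}
The integral term $\calG(u)=\int_\Omega g(u(x))\,\dd x$ is sequentially weakly lower semi-continuous on $U$ by convexity of $g$ (exactly as used in the proof of \cref{thm:existence}), so $\calG(u)\leq\liminf_{n\to\infty}\calG(u_n)$. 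Adding the two inequalities yields the claim.

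For the recovery sequence at a given $u\in U$, the natural choice is the constant sequence $u_n=u$, which trivially converges weakly to $u$. Then $\calG(u_n)=\calG(u)$ for every $n$, while continuity of $y\mapsto\tfrac12\norm{y-\cdot}_Y^2$ in its second argument together with $z_n\to z$ in $Y$ gives
\begin{equation}
    \lim_{n\to\infty}\tfrac12\norm{S(u)-z_n}_Y^2=\tfrac12\norm{S(u)-z}_Y^2,
\end{equation}
so that $\limsup_{n\to\infty}\calE^{z_n}(u_n)=\calE^z(u)$, as required.

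There is essentially no obstacle here: the liminf step uses only weak-to-weak continuity of $S$ together with standard lower semi-continuity properties of the norm and the convex integral functional, and the recovery sequence is the trivial constant one. The only care needed is to combine the weak convergence $S(u_n)\rightharpoonup S(u)$ with the strong convergence $z_n\to z$ correctly so that the difference converges weakly and the norm is lower semi-continuous along it; the strong convergence of $z_n$ is precisely what makes this work.
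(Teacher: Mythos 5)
Your proof is correct and follows essentially the same route as the paper's: the liminf inequality via weak-to-weak continuity of $S$, weak lower semi-continuity of the norm applied to $S(u_n)-z_n\rightharpoonup S(u)-z$, and weak lower semi-continuity of $\calG$ by convexity; and the constant recovery sequence for the limsup inequality. You are somewhat more explicit about combining the weak convergence $S(u_n)\rightharpoonup S(u)$ with the strong convergence $z_n\to z$, but this is exactly the step the paper's proof uses implicitly.
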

\begin{proof}
    For the $\liminf$ inequality, let $u_n\rightharpoonup u$ weakly in $U$, then by property \ref{enm:weakWeakContinuity} and the weak lower semi-continuity of $\norm{\cdot}_Y$ and the convexity of $g$, we have
    \begin{equation}
        \begin{aligned}
            \liminf_{n\to\infty}\calE^{z_n}(u_n)
            &=\liminf_{n\to\infty}\frac12\norm{S(u_n) - z_n}_Y^2 + \int_{\Omega}  g(u_n(x))\,\dd x\\
            &\geq\frac12\norm{S(u) - z}_Y^2 + \int_{\Omega}  g(u(x))\,\dd x
            =\calE^z(u)\,.
        \end{aligned}
    \end{equation}
    For the $\limsup$ inequality, choose $u_n=u\in U$ to obtain
    \begin{equation}
        \begin{split}
            \limsup_{n\to\infty}\calE^{z_n}(u_n)
            =\limsup_{n\to\infty}\frac12\norm{S(u) - z_n}_Y^2 + \int_{\Omega}  g(u(x))\,\dd x
            =\calE^z(u)\,.
            \qedhere
        \end{split}
    \end{equation}
\end{proof}

This proposition now implies a weak stability of the control.
\begin{corollary}[stability of control and state]
    Under the conditions of \cref{thm:GammaConvergence} and \ref{enm:compactness}, any sequence $\{u_n\}_{n\in\N}$ of minimizers of $\calE^{z_n}$ contains a subsequence converging weakly in $U$ to a minimizer $\bar u$ of $\calE^z$.
    The corresponding states $y_n=S(u_n)$ converge strongly in $Y$ to $\bar y=S(\bar u)$.
\end{corollary}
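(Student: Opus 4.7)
The plan is to combine uniform coercivity of the penalty with the $\Gamma$-convergence statement of \cref{thm:GammaConvergence}, in the standard way that sequences of minimizers of $\Gamma$-converging, equi-coercive functionals converge (up to subsequence) to minimizers of the $\Gamma$-limit; the strong convergence of the states is then a direct consequence of the compactness hypothesis \ref{enm:compactness}.

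More concretely, I would proceed in four steps. First, I would establish a uniform bound on $\{u_n\}$ in $U$: since $\dom g=\co\calM$ is a bounded subset of $\R^m$, every minimizer $u_n$ satisfies $u_n(x)\in\co\calM$ for a.e.\ $x\in\Omega$ (otherwise $\calE^{z_n}(u_n)=\infty$, contradicting the existence of minimizers guaranteed by \cref{thm:existence}), hence $\|u_n\|_{L^\infty(\Omega;\R^m)}\leq\max_{v\in\co\calM}|v|$, and using boundedness of $\Omega$ we get a uniform bound in $U$. Second, by reflexivity of $U$ and the Banach--Alaoglu theorem, some subsequence (not relabelled) satisfies $u_n\rightharpoonup\bar u$ weakly in $U$, and since $\co\calM$ is closed and convex the limit $\bar u$ also takes values in $\co\calM$ almost everywhere.

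Third, I would show that $\bar u$ minimizes $\calE^z$. For any competitor $v\in U$, the constant recovery sequence $v_n=v$ produced in the proof of \cref{thm:GammaConvergence} gives $\calE^{z_n}(v)\to\calE^z(v)$. Combining this with the $\liminf$ inequality from \cref{thm:GammaConvergence} applied along $u_n\rightharpoonup\bar u$ and with minimality of $u_n$,
\begin{equation}
    \calE^z(\bar u)\leq\liminf_{n\to\infty}\calE^{z_n}(u_n)\leq\limsup_{n\to\infty}\calE^{z_n}(v)=\calE^z(v),
\end{equation}
so $\bar u$ is a global minimizer of $\calE^z$. Fourth, for the states, compactness of $S$ from \ref{enm:compactness} maps the weakly convergent sequence $u_n\rightharpoonup\bar u$ to a strongly convergent sequence $y_n=S(u_n)\to S(\bar u)=\bar y$ in $Y$.

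The argument is essentially a textbook application of $\Gamma$-convergence, so I do not expect any serious obstacle: the only step requiring a moment's care is verifying equi-coercivity, which in our setting comes for free from the effective domain constraint $\dom g=\co\calM$ rather than, say, an abstract growth condition, since this hard constraint automatically forces the uniform $L^\infty$-bound that yields weak subsequential compactness in $U$.
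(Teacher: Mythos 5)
Your argument is correct and follows essentially the same route as the paper: uniform $L^\infty$-bound from $\dom g=\co\calM$ gives equi-coercivity, the standard $\Gamma$-convergence argument then yields the weak subsequential convergence of minimizers, and compactness of $S$ upgrades the convergence of the states to strong convergence. The one small imprecision is in your last step: the paper invokes \emph{both} \ref{enm:weakWeakContinuity} and \ref{enm:compactness} for the state convergence, and for good reason. If \ref{enm:compactness} is read in its usual sense for a possibly nonlinear operator (bounded sets are mapped into relatively compact sets), then it only gives you a strongly convergent subsequence of $\{S(u_n)\}$ with some limit; you still need the weak-to-weak continuity \ref{enm:weakWeakContinuity} to identify that limit as $S(\bar u)$ (and then the usual subsequence argument upgrades this to convergence of the full sequence). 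Your phrasing treats \ref{enm:compactness} as ``complete continuity,'' which is a stronger property. Since the conditions of \cref{thm:GammaConvergence} already include \ref{enm:weakWeakContinuity}, this is available and the conclusion stands, but it would be cleaner to cite it explicitly.
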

\begin{proof}
    Since $g$ is infinite outside $\co\calM$ we know that $\norm{u}_{L^\infty(\Omega;\R^n)}$ is uniformly bounded among all $u\in U$ with finite energy $\calE^{z_n}(u)$,
    where the bound is independent of $n$.
    Thus, the $\calE^{z_n}$ are equi-mildly coercive so that the convergence of minimizers $u_n$ follows from the $\Gamma$-convergence of the functionals.
    The convergence of states $y_n=S(u_n)\to \bar y=S(\bar u)$ along the subsequence follows from $u_n\rightharpoonup \bar u$ together with properties \ref{enm:weakWeakContinuity} and \ref{enm:compactness} (weak-to-weak continuity and compactness of $S$, respectively).
\end{proof}

Under additional assumptions, we also obtain convergence of the dual variable.
\begin{corollary}[stability of dual]\label{thm:stability}
    Under the conditions of \cref{thm:GammaConvergence} and \ref{enm:weakWeakContinuity}--\ref{enm:adjointConvergence},
    consider the sequence of minimization problems $\min_{u\in U}\calE^{z_n}(u)$.
    The corresponding optimal controls $u_n$, states $y_n$, and dual variables $p_n$ satisfy up to a subsequence
    \begin{equation}
        u_n\rightharpoonup \bar u\;\text{in }U\,,
        \quad
        y_n\to \bar y\;\text{in }Y\,,
        \quad\text{and}\quad
        p_n\to \bar p\;\text{in }V^*\,,
    \end{equation}
    where $\bar u$ is a minimizer of $\calE^z$, $\bar y = S(\bar u)$, and $\bar p$ satisfies \eqref{eq:optsys}.
\end{corollary}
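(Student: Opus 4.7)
The plan is as follows. From the preceding corollary on stability of control and state, one first extracts a subsequence (not relabeled) along which $u_n \rightharpoonup \bar u$ in $U$ and $y_n \to \bar y = S(\bar u)$ in $Y$, where $\bar u$ minimizes $\calE^z$. Define $\bar p := -\calF'(\bar u) = -S'(\bar u)^*(\bar y - z)$, so that the first equation of \eqref{eq:optsys} holds by construction. It then remains to prove $p_n \to \bar p$ in $V^*$ and, along the same subsequence, $\bar u \in \partial\calG^*(\bar p)$.

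For the convergence of the dual variables, I would use the decomposition
\[
\bar p - p_n = \bigl[S'(u_n)^* - S'(\bar u)^*\bigr](y_n - z_n) + S'(\bar u)^*\bigl[(y_n - z_n) - (\bar y - z)\bigr].
\]
The second summand tends to zero in $V^*$ by continuity of $S'(\bar u)^*\colon Y \to V^*$ together with the strong convergence $y_n - z_n \to \bar y - z$ in $Y$. For the first summand, write $y_n - z_n = (\bar y - z) + r_n$ with $r_n \to 0$ in $Y$; the ``frozen'' part $[S'(u_n)^* - S'(\bar u)^*](\bar y - z)$ tends to zero in $V^*$ directly by \ref{enm:adjointConvergence} applied to $y = \bar y - z$. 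The remainder $[S'(u_n)^* - S'(\bar u)^*] r_n$ is controlled by the operator-norm bound $\sup_n \|S'(u_n)^* - S'(\bar u)^*\|_{L(Y, V^*)} < \infty$, which follows from the pointwise convergence \ref{enm:adjointConvergence} via the Banach--Steinhaus principle; multiplying by $\|r_n\|_Y \to 0$ yields the required convergence to $0$ in $V^*$.

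To pass to the limit in the subgradient inclusion $u_n \in \partial\calG^*(p_n)$, I would exploit the continuous chain of embeddings $V^* \hookrightarrow L^\infty(\Omega;\R^m) \hookrightarrow U$ (valid since $\Omega$ is bounded) to upgrade $p_n \to \bar p$ from $V^*$ to strong convergence in $U$. Since $\calG^*\colon U \to \R \cup \{\infty\}$ is convex, proper, and lower semi-continuous, $\partial\calG^*$ is a maximal monotone operator on $U$ whose graph is sequentially closed with respect to strong convergence in the first argument paired with weak convergence in the second; applying this closedness to $p_n \to \bar p$ strongly and $u_n \rightharpoonup \bar u$ weakly in $U$ delivers $\bar u \in \partial\calG^*(\bar p)$, completing \eqref{eq:optsys}.

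The main obstacle I anticipate is upgrading the convergence of $p_n$ to the strong topology of $V^*$, rather than settling for the weak topology of $U$ inherited from $u_n$: the stronger topology is essential to close the graph of $\partial\calG^*$ against the weak convergence of $u_n$, and it is precisely here that \ref{enm:adjointConvergence} together with the uniform boundedness principle plays the decisive role, taming the product $[S'(u_n)^* - S'(\bar u)^*](y_n - z_n)$ in the $V^*$-norm even though $u_n$ converges only weakly in $U$.
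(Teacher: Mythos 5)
Your proof is correct, and the key argument — that $p_n\to\bar p$ in $V^*$ — follows the paper's strategy exactly: use the Banach--Steinhaus theorem together with \ref{enm:adjointConvergence} to get a uniform bound $\sup_n\norm{S'(u_n)^*}_{L(Y;V^*)}<\infty$, apply the pointwise convergence from \ref{enm:adjointConvergence} to the fixed element $z-\bar y\in Y$, and absorb the rest using the strong convergence $z_n-y_n\to z-\bar y$ in $Y$. The only cosmetic difference is the grouping: the paper writes $p_n-\bar p=S'(u_n)^*\big[(z_n-y_n)-(z-\bar y)\big]+[S'(u_n)^*-S'(\bar u)^*](z-\bar y)$ and is done in one step, whereas you split off the operator difference first and then need a further decomposition $y_n-z_n=(\bar y-z)+r_n$ inside the first summand; both reach the same two estimates. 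The one part where you genuinely diverge is the final paragraph, where you verify $\bar u\in\partial\calG^*(\bar p)$ by upgrading $p_n\to\bar p$ from $V^*$ to $U$ via $V^*\hookrightarrow L^\infty(\Omega;\R^m)\hookrightarrow U$ and invoking strong-weak sequential closedness of the graph of the maximal monotone operator $\partial\calG^*$. This is valid, but it is unnecessary work: the preceding corollary already gives you that $\bar u$ is a minimizer of $\calE^z$, so \cref{thm:optCond} immediately produces a $\bar p$ satisfying both lines of \eqref{eq:optsys}, and the first line determines $\bar p$ uniquely as $S'(\bar u)^*(z-\bar y)$ — which is precisely the $\bar p$ you defined. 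The paper therefore gets the subgradient inclusion for free, which is why its proof addresses only the $V^*$-convergence of $p_n$. Your graph-closedness route is a correct self-contained alternative, and it is worth knowing that it works without appealing again to the optimality conditions, but in the present context it duplicates what \cref{thm:optCond} already guarantees.
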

\begin{proof}
    We already know $u_n\rightharpoonup \bar u$ and $y_n\to \bar y$.
    By the Banach--Steinhaus theorem and \ref{enm:adjointConvergence}, $[S'(u_n)-S'(\bar u)]^*$ is uniformly bounded in $L(Y;V^*)$ and thus also $S'(u_n)^*$. Now
    \begin{equation}
        \begin{split}
            \begin{aligned}[b]
                \norm{p_n-\bar p}_{V^*}
                &=\norm{S'(u_n)^*(z_n-y_n)-S'(\bar u)^*(z-\bar y)}_{V^*}\\
                &\leq\norm{S'(u_n)^*(z_n-y_n)-S'(u_n)^*(z-\bar y)}_{V^*}+\norm{S'(u_n)^*(z-\bar y)-S'(\bar u)^*(z-\bar y)}_{V^*}\\
                &\leq\norm{S'(u_n)^*}_{L(Y;V^*)}\norm{z_n-y_n-(z-\bar y)}_Y+\norm{[S'(u_n)^*-S'(\bar u)^*](z-\bar y)}_{V^*}
                \to0\,.
            \end{aligned}
            \qedhere 
        \end{split}
    \end{equation}
\end{proof}

The final result shows strong convergence of controls outside the singular arc, which will be seen to correspond to the case where $\partial g^*(\bar p(x))$ is set-valued (cf.~\eqref{eq:conj_subdiff_radial} and \eqref{eq:conj_subdiff_concentric}).
\begin{proposition}[locally strong convergence of control]\label{thm:localConvergence}
    Let the conditions of \cref{thm:GammaConvergence} and \ref{enm:weakWeakContinuity}--\ref{enm:adjointConvergence} hold.
    Furthermore, let $Q$ be the set on which $\partial g^*$ is single-valued, and abbreviate $\Omega_P=\{x\in\Omega:p(x)\in P\}$ for given $P\subset\R^m$. Then we have
    \begin{enumerate}[label=(\roman*)]
        \item for any $P\subset\subset Q$ compact and $n$ large enough, $u_n(x) = \bar u(x) \in \calM$ for a.e. $x\in \Omega_P$;
        \item $u_n|_{\Omega_Q}\to \bar u|_{\Omega_Q}$ strongly in $L^2(\Omega_Q;\R^m)$ and $\bar u(x)\in \calM$ for a.e. $x\in \Omega_Q$.
    \end{enumerate}
\end{proposition}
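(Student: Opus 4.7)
The plan is to leverage the $L^\infty$-convergence of the dual variables established in \cref{thm:stability} together with the polyhedral structure of $g^*$. Concretely, \cref{thm:stability} together with \ref{enm:adjointConvergence} yields $p_n\to\bar p$ in $V^*\hookrightarrow L^\infty(\Omega;\R^m)$, while the set $Q$ where $\partial g^*$ is single-valued decomposes as the disjoint union $Q=\bigcup_{u\in\calM}Q(u)$ of open convex polyhedra on which $\partial g^*\equiv\{u\}$; the complement $\R^m\setminus Q$ is the union of the lower-dimensional facets separating neighboring cells. Throughout, $\Omega_P$ is understood with respect to the limit $\bar p$.

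For part (i), fix a compact $P\subset\subset Q$ and set $\delta:=\operatorname{dist}(P,\R^m\setminus Q)>0$. Choose $N$ with $\norm{p_n-\bar p}_{L^\infty}<\delta/2$ for $n\geq N$, so that outside the null set $\bigcup_{n\geq N} N_n$ (one null set per $n$) the pointwise bound $|p_n(x)-\bar p(x)|<\delta/2$ holds simultaneously for all $n\geq N$. For almost every $x\in\Omega_P$ the point $\bar p(x)\in P$ lies in some cell $Q(u(x))$ with $u(x)\in\calM$, and the open ball $B_{\delta/2}(\bar p(x))$ lies in $Q$ by the choice of $\delta$; being connected and meeting $Q(u(x))$, it must lie entirely inside that single cell. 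Hence $p_n(x)\in Q(u(x))$, so $\partial g^*(p_n(x))=\partial g^*(\bar p(x))=\{u(x)\}$. Applying the pointwise form \eqref{eq:subdiff_pointwise} of the optimality conditions \eqref{eq:optsys} to both $(u_n,p_n)$ and $(\bar u,\bar p)$ then forces $u_n(x)=\bar u(x)=u(x)\in\calM$.

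For part (ii), the same single-valuedness argument applied to $\bar p(x)\in Q$ shows $\bar u(x)\in\calM$ a.e.\ on $\Omega_Q$. Next, exhaust $Q$ by the compacta $P_k:=\{q\in\R^m:|q|\leq k,\ \operatorname{dist}(q,\R^m\setminus Q)\geq 1/k\}$, which satisfy $P_k\subset\subset Q$ and $P_k\uparrow Q$; by continuity of measure $|\Omega_Q\setminus\Omega_{P_k}|\to0$ as $k\to\infty$. Since $\dom g\subset\co\calM$ is bounded, there exists $C$ with $\norm{u_n}_{L^\infty},\norm{\bar u}_{L^\infty}\leq C$, and combining with (i),
\begin{equation*}
    \int_{\Omega_Q}|u_n-\bar u|^2\,\dd x \;\leq\; \int_{\Omega_{P_k}}|u_n-\bar u|^2\,\dd x + 4C^2|\Omega_Q\setminus\Omega_{P_k}|,
\end{equation*}
where the first term vanishes once $n$ is large enough (depending on $k$). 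Given any $\eps>0$, one first chooses $k$ to make the tail below $\eps$ and then $n$ so that the bulk term vanishes, yielding the claimed strong $L^2$-convergence.

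The main obstacle is the buffer/connectedness argument in part (i): one must rule out that the perturbation $p_n(x)-\bar p(x)$ pushes $p_n(x)$ across a facet into a neighboring cell $Q(u')$ with $u'\neq u(x)$. This is precisely why the embedding $V^*\hookrightarrow L^\infty$ built into \ref{enm:adjointConvergence} is needed: only a uniform $L^\infty$-bound on $p_n-\bar p$ yields the pointwise buffer $\delta/2$ required to trap $p_n(x)$ in the correct cell. Once (i) is in place, (ii) is a routine measure-theoretic exhaustion relying only on the uniform $L^\infty$ bound on the controls inherited from $\dom g\subset\co\calM$.
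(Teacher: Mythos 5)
Your proof is correct and follows essentially the same strategy as the paper's: use the $L^\infty$-convergence $p_n\to\bar p$ from \cref{thm:stability} to trap $p_n(x)$ in the same connected cell of $Q$ as $\bar p(x)$, conclude $u_n(x)=\bar u(x)\in\calM$ pointwise via single-valuedness, then pass from compact exhaustion to $\Omega_Q$ and upgrade to $L^2$-convergence using the uniform $L^\infty$-bound on the controls. The only cosmetic difference is that for (ii) the paper invokes dominated convergence while you do the measure-theoretic estimate by hand; for uniformly bounded sequences these are interchangeable.
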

\begin{proof}
    By \cref{thm:stability}, we have $p_n\to \bar p$ in $L^\infty(\Omega;\R^m)$.
    In particular, for $n$ large enough, for all $x\in\Omega_P$ the value $p_n(x)$ lies in the same connected component of $Q$ as $\bar p(x)$.
    Hence, $u_n(x)=\bar u(x)$ due to $u_n(x)\in\partial g^*(p_n(x))=\partial g^*(\bar p(x))$ and  $\bar u(x)\in\partial g^*(\bar p(x))$.
    Since this holds for any compact subset $P$ of $Q$, we actually have pointwise convergence $u_n(x)\to \bar u(x)$ for almost all $x\in\Omega_Q$.
    The uniform boundedness of $u_n$ (since otherwise $g(u_n(x))=\infty$) then implies strong convergence by the dominated convergence theorem.
\end{proof}

\subsection{Controls in $\calM$}

Here, we examine more closely controls taking values only in $\calM$. In the following, we refer to minimizers $\bar u\in U$ of $\calE^z$ with $\bar u(x) \in \calM$ for a.e. $x\in \Omega$ as \emph{multibang controls}.
First, we note that such controls allow to achieve an energy arbitrarily close to the optimum.
\begin{remark}[near-optimality]
    Under hypotheses \ref{enm:weakWeakContinuity} and \ref{enm:compactness}, we have
    \begin{equation}
        \min_{u\in U} \calE^z(u)= \inf_{\substack{u\in U\\ u(x)\in\calM \text{ a.e.}}} \calE^z(u)\,.
    \end{equation}
    Indeed, let $\bar u\in U$ minimize $\calE^z$.
    By the definition of $g$, there exists a sequence $\{u_n\}_{n\in\N}\subset U$ with $u_n(x)\in \calM$ a.e., $u_n\rightharpoonup \bar u$ in $U$, and $\int_\Omega g(u_n(x))\,\dd x\to\int_\Omega g(\bar u(x))\,\dd x$.
    Furthermore, $S(u_n)\to S(\bar u)$ in $Y$ so that $\calE^z(u_n)\to\calE^z(\bar u)$.
\end{remark}

In the remainder of this subsection, we shall restrict ourselves to the case that
\begin{enumerate}[label=(\textsc{h}\arabic*),ref={\normalfont(\textsc{h}\arabic*)}]\setcounter{enumi}{4}
    \item\label{enm:linearity}
        $S:U\to Y$ is linear,
\end{enumerate}
which will only apply to the elasticity example, but not to the Bloch setting.
The intuition is that the case with multibang controls is generic (or even that targets with non-multibang controls, i.e., $u(x)\notin\calM$ on a non-negligible set, are nowhere dense in $Y$).
This is consistent with \cref{thm:localConvergence}, since targets with a singular arc of zero measure (or rather with $\Omega_Q=\Omega$) can be perturbed without producing a singular arc. 
Below we will at least see that targets leading to multibang controls  are dense in $Y$,
and that the mapping $z\mapsto\argmin_{u\in U}\calE^z(u)$ is not continuous in any target $z$ for which the singular arc has positive measure.
\begin{proposition}[approximation via multibang control]\label{thm:discrControlAppr}
    Let $S$ satisfy \ref{enm:weakWeakContinuity}--\ref{enm:linearity}.
    Then for any $z\in Y$ and corresponding minimizer $\bar u\in U$ of $\calE^z$, there exists a sequence $\{z_n\}_{n\in\N} \subset Y$ with $z_n\to z$ such that the corresponding minimizers $u_n\in U$ of $\calE^{z_n}$ satisfy $u_n(x)\in\calM$ a.e.,  $u_n\rightharpoonup \bar u$, and $\calE^{z_n}(u_n)=\calE^z(\bar u)$.
\end{proposition}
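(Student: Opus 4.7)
The plan is to first extract the dual information from the optimality conditions for $\bar u$, then build multibang approximants $u_n$ of $\bar u$ whose values are consistent with the face structure of $\partial g^*(\bar p(\cdot))$, and finally perturb the target to $z_n$ so that $u_n$ becomes a minimizer of $\calE^{z_n}$ with matching energy. By \cref{thm:optCond}, there exists $\bar p \in U$ with $-\bar p = S^*(S(\bar u) - z)$ and $\bar u(x) \in \partial g^*(\bar p(x))$ a.e. Since $p \mapsto \partial g^*(p)$ takes values in the (finite) collection of faces of $\co\calM$, I would partition $\Omega$ into finitely many measurable sets $A_\alpha$ on each of which the face $F_\alpha := \partial g^*(\bar p(x))$ is constant with vertices $v_1^\alpha,\dots,v_{k_\alpha}^\alpha \in F_\alpha \cap \calM$. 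A measurable selection then yields $\bar u(x) = \sum_j \lambda_j^\alpha(x)\, v_j^\alpha$ on $A_\alpha$ with $\lambda_j^\alpha \geq 0$, $\sum_j \lambda_j^\alpha = 1$; because the Fenchel identity forces $g$ to agree on $F_\alpha$ with the affine function $u\mapsto \scalprod{\bar p(x),u} - g^*(\bar p(x))$, this decomposition additionally satisfies $g(\bar u(x)) = \sum_j \lambda_j^\alpha(x)\, g(v_j^\alpha)$ pointwise.

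For each $n$, the multibang approximant $u_n$ is constructed by a layered refinement: subdivide each $A_\alpha$ into cubes $Q_l^n$ of side $1/n$, set $\bar\lambda_{j,l}^\alpha := |Q_l^n|^{-1} \int_{Q_l^n} \lambda_j^\alpha(x)\, \dd x$, split $Q_l^n$ into disjoint measurable pieces $E_{j,l}^\alpha$ of Lebesgue measure $\bar\lambda_{j,l}^\alpha |Q_l^n|$, and put $u_n \equiv v_j^\alpha$ on $E_{j,l}^\alpha$. Then $u_n(x) \in \calM$ a.e., and the construction has three key properties: (i) the exact identity $\int_{Q_l^n} \phi \cdot u_n\, \dd x = \int_{Q_l^n} \phi \cdot \bar u\, \dd x$ for every $\phi$ constant on $Q_l^n$, from which a standard density argument gives $u_n \rightharpoonup \bar u$ in $U$; (ii) $\int_\Omega g(u_n)\, \dd x = \int_\Omega g(\bar u)\, \dd x$ \emph{exactly}, since by the affinity of $g$ on $F_\alpha$ both integrals reduce to $\sum_{\alpha,l} |Q_l^n| \sum_j \bar\lambda_{j,l}^\alpha g(v_j^\alpha)$; and (iii) $u_n(x) \in F_\alpha \subseteq \partial g^*(\bar p(x))$ on $A_\alpha$, hence $\bar p(x) \in \partial g(u_n(x))$ a.e., so $\bar p \in \partial\calG(u_n)$.

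Finally, set $z_n := z + S(u_n) - S(\bar u)$. Property \ref{enm:compactness} together with $u_n \rightharpoonup \bar u$ gives $S(u_n) \to S(\bar u)$ strongly in $Y$, hence $z_n \to z$. The optimality system for $\calE^{z_n}$ at $u_n$ then holds because
\begin{equation*}
-\calF'(u_n) = -S^*(S(u_n) - z_n) = S^*(z - S(\bar u)) = \bar p \in \partial\calG(u_n),
\end{equation*}
and by linearity \ref{enm:linearity} of $S$ the functional $\calE^{z_n}$ is convex, so $u_n$ is a global minimizer. Since $z_n - S(u_n) = z - S(\bar u)$ the discrepancy terms match exactly, and combined with (ii) this yields $\calE^{z_n}(u_n) = \calE^z(\bar u)$. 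The main obstacle in this argument is enforcing the identity $\calG(u_n) = \calG(\bar u)$ \emph{exactly} rather than merely in the limit, while simultaneously preserving $\bar p \in \partial\calG(u_n)$; both are secured by locking the choice of multibang values into the face decomposition of $\bar u(x)$ dictated by $\partial g^*(\bar p(x))$, which relies decisively on the polyhedral epigraph structure of $g$.
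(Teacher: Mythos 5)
Your argument follows the same route as the paper's (deliberately terse) sketch: read off $\bar p$ and $\bar u(x)\in\partial g^*(\bar p(x))$ from \cref{thm:optCond}, build multibang $u_n\rightharpoonup\bar u$ taking values only in $\calM\cap\partial g^*(\bar p(x))$ with $\calG(u_n)=\calG(\bar u)$, shift the target to $z_n=Su_n+(z-S\bar u)$, and verify optimality via convexity and $\bar p\in\partial\calG(u_n)$. Where the paper merely asserts that such $u_n$ "can be found," you supply the explicit Vitali-type layered-refinement construction and the key observation that $g$ is affine on each face $\partial g^*(\bar p(x))$ by the Fenchel identity, which is precisely what makes $\calG(u_n)=\calG(\bar u)$ hold \emph{exactly}; this is a correct and welcome elaboration of the same argument.
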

\begin{proof}[Sketch of proof]
    By \eqref{eq:optsys}, we have $\bar p=S^*(z-S\bar u)$ and $\bar u(x)\in\partial g^*(\bar p(x))$ for almost all $x\in\Omega$.
    The piecewise affine structure of $g^*:\R^m\to\R$ implies that $\bar u(x)$ is a convex combination of (at most) $m+1$ values $\hat u_j\in\calM\cap\partial g^*(\bar p(x))$.
    Thus one can find $u_n\rightharpoonup \bar u$ with $u_n(x)\in\calM\cap\partial g^*(\bar p(x))$ for almost all $x\in\Omega$.
    Choosing $z_n=Su_n+(z-S\bar u)$, we have $z_n\to z$ as well as $\bar p=S^*(z_n-Su_n)$ and $u_n(x)\in\partial g^*(\bar p(x))$ for almost all $x\in \Omega$. Hence by the convexity of the energy $\calE^{z_n}$, $u_n$ is a minimizer of $\calE^{z_n}$.
    Furthermore, one can even choose $u_n$ such that $\int_\Omega g(u_n(x))\,\dd x=\int_\Omega g(\bar u(x))\,\dd x$ so that $\calE^{z_n}(u_n) =  \calE^z(\bar u)$ as claimed.
\end{proof}

\begin{corollary}[strong convergence of control]
    Let the conditions of \cref{thm:discrControlAppr} hold. Then: 
    \begin{enumerate}[label=(\roman*)]
        \item The targets $z$ admitting a multibang control $\bar u$ minimizing $\calE^z$ are dense in $Y$.
        \item If $S$ is injective and the minimizer $\bar u$ to $\calE^z$ has a singular arc of positive measure, then one cannot have strong convergence of minimizers $u_n$ of $\calE^{z_n}$ for all $z_n\to z$.
    \end{enumerate}
\end{corollary}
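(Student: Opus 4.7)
The plan is to obtain both parts as essentially immediate consequences of Proposition~\ref{thm:discrControlAppr}, the key new ingredient being that pointwise a.e.\ limits of functions taking values in the finite set $\calM$ again lie in $\calM$.

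For part (i), I would take any $z\in Y$ and, using Proposition~\ref{thm:existence}, a minimizer $\bar u$ of $\calE^z$. Proposition~\ref{thm:discrControlAppr} then supplies a sequence $z_n\to z$ whose corresponding minimizers $u_n$ of $\calE^{z_n}$ satisfy $u_n(x)\in\calM$ a.e. Hence every $z\in Y$ is approximable by targets admitting a multibang minimizer.

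For part (ii), let $\bar u$ be a minimizer of $\calE^z$ with $\lvert\calS_{\bar u}\rvert>0$. I would first note that since $S$ is linear and injective, $\calF(u)=\tfrac12\norm{Su-z}_Y^2$ is strictly convex, so $\calE^z$ is strictly convex and $\bar u$ is the unique minimizer; this guarantees that any talk of $u_n\to\bar u$ is unambiguous. Applying Proposition~\ref{thm:discrControlAppr} to this $\bar u$ produces a sequence $z_n\to z$ and multibang minimizers $u_n$ of $\calE^{z_n}$ with $u_n(x)\in\calM$ a.e.\ and $u_n\rightharpoonup\bar u$ weakly in $U$. Arguing by contradiction, assume $u_n\to\bar u$ strongly in $L^2(\Omega;\R^m)$. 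Passing to a subsequence, $u_n(x)\to\bar u(x)$ for a.e.\ $x\in\Omega$. Because $\calM$ is a finite (in particular discrete) subset of $\R^m$, a convergent sequence in $\calM$ is eventually constant, so $\bar u(x)\in\calM$ for a.e.\ $x$, contradicting $\lvert\calS_{\bar u}\rvert>0$. This proves that this particular sequence $z_n\to z$ admits no strongly convergent multibang minimizers, which is what is required.

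Neither step presents a real obstacle; the only point requiring care is the unique-minimizer remark in part (ii), which justifies why the strong convergence hypothesis identifies the limit with $\bar u$. The essential mechanism — that strong $L^2$ convergence forces pointwise a.e.\ convergence along a subsequence, and that a discrete-valued sequence can only converge to a discrete value — mirrors the standard obstruction to strong relaxation results in problems with genuine mixed phases.
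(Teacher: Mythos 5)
Your proof is correct and follows essentially the same path as the paper: part (i) is the immediate density consequence of Proposition~\ref{thm:discrControlAppr}, and part (ii) combines uniqueness of minimizers (from strict convexity via injectivity of the linear $S$), the multibang approximating sequence from Proposition~\ref{thm:discrControlAppr}, and the fact that strong $L^2$ convergence forces pointwise a.e.\ convergence along a subsequence, which would push the $\calM$-valued $u_n$ to an $\calM$-valued limit on the singular arc. The extra remarks you make (discreteness of $\calM$ forcing eventual constancy, and identification of the strong limit with $\bar u$) are just spelling out details the paper leaves implicit.
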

\begin{proof}
    The first statement is a direct consequence of \cref{thm:discrControlAppr}. The second statement follows from the strict convexity of $\calE^z$ and thus the uniqueness of its minimizers, together with the fact that strong convergence in $U$ implies pointwise convergence:
    Indeed, let $\bar u$ have a singular arc $\calS_{\bar u}$ of positive measure and choose $z_n\to z$ such that the unique minimizers $u_n$ of $\calE^{z_n}$ are multibang controls (which is possible by the first statement).
    If we had strong convergence $u_n\to\bar u$ in $U$, then (up to a subsequence) also $u_n\to\bar u$ pointwise almost everywhere, in particular on $\calS_{\bar u}$.
    This contradicts $u_n(x)\in\calM$ almost everywhere.
\end{proof}

\subsection{Retrieval of exact controls}

We now consider more specifically the consequence of the convex relaxation \eqref{eq:reg} for some non-negative and strictly convex $c:\R^m\to\R$.
A peculiar feature of the multibang control in this case is that for attainable targets -- i.e., if there exists a $\hat u\in U$ such that $z=S(\hat u)$ -- the generating control $\hat u$ can only be recovered as a minimizer $\bar u$ of the optimization problem~\eqref{eq:formal_prob} if 
$c(\hat u(x))= \min_{v\in\calM} c(v)$ almost everywhere. This demonstrates the desirability to allow multiple admissible control values of equal magnitude. \begin{proposition}[achievement of target]
    If $S$ satisfies \ref{enm:FrechetDifferentiability}, then, for any minimizer  $\bar u\in U$ of $\calE^z$ that satisfies $S(\bar u) = z$, it holds that $g(\bar u(x)) = \min_{v\in\calM} g(v)$ almost everywhere.
    In particular, if in addition $\bar u(x)\in\calM$ almost everywhere, then $c(\bar u(x))=\min_{v\in\calM} c(v)$.
\end{proposition}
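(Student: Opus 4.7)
The plan is to exploit the optimality system from \cref{thm:optCond} together with the structure of $g$ as the biconjugate of $g_\infty = \alpha c + \delta_\calM$. Since $\bar u$ minimizes $\calE^z$ and in addition $S(\bar u) = z$, we have $\calF'(\bar u) = S'(\bar u)^*(S(\bar u) - z) = 0$, so the dual variable in \eqref{eq:optsys} is $\bar p = 0$, and the inclusion $\bar u \in \partial \calG^*(\bar p)$ together with \eqref{eq:subdiff_pointwise} yields $\bar u(x) \in \partial g^*(0)$ for almost every $x \in \Omega$. By Fenchel duality, $v \in \partial g^*(0)$ is equivalent to $0 \in \partial g(v)$, i.e.\ $v \in \argmin g$. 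Hence $g(\bar u(x)) = \min_{v\in\R^m} g(v)$ almost everywhere.

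The next step is to identify $\min_{\R^m} g$ with $\min_{\calM} g$ via the convex-envelope formula. For any $v \in \co\calM$, every representation $v = \sum_j \lambda_j v_j$ with $v_j \in \calM$, $\lambda_j \ge 0$, $\sum_j \lambda_j = 1$ satisfies $\sum_j \lambda_j \alpha c(v_j) \ge \min_{w\in\calM} \alpha c(w)$; taking the infimum over such decompositions in $g = g_\infty^{**}$ gives $g(v) \ge \min_{w\in\calM} \alpha c(w)$. Conversely, for $v^* \in \argmin_{w\in\calM} \alpha c(w)$ the trivial decomposition shows $g(v^*) \le \alpha c(v^*) = \min_{w\in\calM} \alpha c(w)$. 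Combined with $g \le g_\infty = \alpha c$ on $\calM$, this yields the chain $\min_{v\in\R^m} g(v) = \min_{v\in\calM} g(v) = \min_{v\in\calM} \alpha c(v)$, which proves the first assertion.

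For the ``in particular'' claim, the crucial observation is that \emph{strict} convexity of $c$ forces $g(v) = \alpha c(v)$ for every $v \in \calM$. Indeed, any representation $v = \sum_j \lambda_j v_j$ with $v_j \in \calM$, $\lambda_j > 0$, $\sum_j \lambda_j = 1$ and with the $v_j$ not all identical satisfies $c(v) < \sum_j \lambda_j c(v_j)$ by strict convexity, and hence $\sum_j \lambda_j \alpha c(v_j) > \alpha c(v)$; the trivial decomposition $v = v$ attains $\alpha c(v)$, so the infimum defining $g_\infty^{**}(v)$ is exactly $\alpha c(v)$. Substituting $\bar u(x) \in \calM$ into the first assertion then yields $\alpha c(\bar u(x)) = g(\bar u(x)) = \min_{v\in\calM} \alpha c(v)$, and dividing by $\alpha$ gives the desired identity.

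The main obstacle I expect is bookkeeping: one must carefully distinguish the three minima $\min_{\R^m} g$, $\min_\calM g$, and $\min_\calM \alpha c$, and invoke strict convexity of $c$ at precisely the right step to collapse $g_\infty^{**}$ to $\alpha c$ on the finite set $\calM$. The optimality-condition computation itself is essentially immediate once one recognises that $\bar p = 0$.
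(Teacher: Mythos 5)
Your proposal is correct and follows essentially the same route as the paper: derive $\bar p = 0$ from $S(\bar u)=z$ and the linearity of $S'(\bar u)^*$, pass to $0\in\partial g(\bar u(x))$ pointwise, and then identify $\min_{\R^m} g$ with $\min_{\calM}\alpha c$. Where the paper simply cites the abstract identity $\min f^{**} = \inf f$, you unfold the lower convex envelope as an infimum over convex decompositions over $\calM$ — a more elementary but equivalent step. You also supply the argument the paper leaves implicit for the ``in particular'' clause: strict convexity of $c$ forces the trivial decomposition to be uniquely optimal at any $v\in\calM$, so $g=\alpha c$ on $\calM$, which is exactly what is needed to pass from $g(\bar u(x))=\min_\calM g$ to $c(\bar u(x))=\min_\calM c$. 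That is a worthwhile clarification.
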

\begin{proof}
    If $S(\bar u)=z$, the first relation in the optimality condition \eqref{eq:optsys} together with linearity of $S'(\bar u)$ implies $\bar p=0$. Hence, the second relation yields $\bar u\in\partial\calG^*(0)$ and therefore $0\in\partial\calG(\bar u)$. By \eqref{eq:subdiff_pointwise}, this implies $0\in\partial g(\bar u(x))$ for almost all $x\in\Omega$ and therefore 
    \begin{equation}
        g(\bar u(x)) = \min_{v\in \R^m} g(v) = \inf_{v\in\R^m} g_\infty(v) = \inf_{v\in \calM} \alpha c(v) =  \min_{v\in \calM} g(v)
    \end{equation}
    since $\min f^{**} = \inf f$ by the properties of the convex hull, see, e.g., \cite[Prop.~12.9\,(iii)]{Bauschke:2011}.
\end{proof}
If, however, $c(\hat u(x))=\min_{v\in\calM} c(v)$ is not satisfied almost everywhere, then the generating control $\hat u$ can only be recovered in the limit $\alpha\to 0$.
In fact, in this limit the best approximation is achieved, i.e., an optimal control which yields the minimum possible tracking term $\calF$. In the following, we denote by $u_\alpha$ the minimizer of $\calE^z$ (which depends on $\alpha$ via the definition \eqref{eq:reg} of $g$) for given $\alpha>0$.
\begin{proposition}[$\Gamma$-convergence for vanishing regularization]
    For given $z\in Y$, let $M:=\inf_{u\in U}\norm{S(u)-z}_Y$ and $\calO:=\{u\in U:\norm{S( u)-z}_Y=M\}$.
    If $S$ satisfies \ref{enm:weakWeakContinuity}, then with respect to weak convergence in $U$ we have
    \begin{equation}
        \Gamma\text{-}\lim_{\alpha\to0}\frac1\alpha\left(\calE^z-\frac{M^2}2\right)
        =\delta_{\calO}+\calG_1
    \end{equation}
    where
    \begin{equation}
        \calG_1(u)=\int_{\Omega}  g_1^{**}(u(x))\,\dd x
        \quad\text{ for }\quad
        g_1(u)=c(u) + \delta_{\calM}(u)\,.
    \end{equation}
\end{proposition}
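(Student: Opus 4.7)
The plan is to exploit the scaling of the multibang penalty with $\alpha$ so as to reduce the statement to a short computation plus one weak lower semicontinuity argument.

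First I would observe that, since $\alpha\delta_\calM = \delta_\calM$ for every $\alpha>0$, one has $g_\infty = \alpha c + \delta_\calM = \alpha(c+\delta_\calM) = \alpha g_1$, and hence $g = g_\infty^{**} = (\alpha g_1)^{**} = \alpha g_1^{**}$ by the elementary identity $(\alpha f)^{**} = \alpha f^{**}$. Consequently the rescaled objective decouples as
\[
    F_\alpha(u) \defgl \frac1\alpha\!\left(\calE^z(u)-\frac{M^2}{2}\right)
    = \frac1\alpha\!\left(\frac12\norm{S(u)-z}_Y^2 - \frac{M^2}{2}\right) + \calG_1(u),
\]
and the first summand is nonnegative for every $u\in U$ by the definition of $M$, vanishing precisely on $\calO$.

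For the $\liminf$ inequality, let $\alpha_n\to 0$ and $u_n\rightharpoonup u$ weakly in $U$. By \ref{enm:weakWeakContinuity}, $S(u_n)\rightharpoonup S(u)$ in $Y$, so weak lower semicontinuity of the norm yields $\liminf_{n\to\infty}\norm{S(u_n)-z}_Y^2\geq\norm{S(u)-z}_Y^2$. If $u\notin\calO$, this means $\norm{S(u)-z}_Y^2>M^2$, and hence for every $K>0$ one has $\tfrac1{\alpha_n}(\tfrac12\norm{S(u_n)-z}_Y^2-M^2/2)>K$ eventually, forcing $\liminf_n F_{\alpha_n}(u_n)=+\infty=(\delta_\calO+\calG_1)(u)$. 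If on the other hand $u\in\calO$, the first summand of $F_{\alpha_n}(u_n)$ remains $\geq 0$, and the weak sequential lower semicontinuity of $\calG_1$ (which is convex and lower semicontinuous because $g_1^{**}$ is) gives $\liminf_n\calG_1(u_n)\geq\calG_1(u)$, so that $\liminf_n F_{\alpha_n}(u_n)\geq\calG_1(u)=(\delta_\calO+\calG_1)(u)$.

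For the $\limsup$ inequality the constant sequence $u_n\equiv u$ is a recovery sequence: either $(\delta_\calO+\calG_1)(u)=+\infty$ and nothing is to be shown, or $u\in\calO$ and $\calG_1(u)<\infty$, in which case the first summand of $F_\alpha(u)$ vanishes identically so that $F_\alpha(u)=\calG_1(u)$ for every $\alpha>0$. The argument is essentially routine once the scaling identity $g=\alpha g_1^{**}$ is recognised; the only mildly delicate point is the first case of the $\liminf$ analysis, where \ref{enm:weakWeakContinuity} is needed to propagate the strict inequality $\norm{S(u)-z}_Y>M$ through the weak limit and thereby force the penalty to blow up at the prescribed rate $1/\alpha_n$.
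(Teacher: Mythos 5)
Your proof is correct and follows essentially the same route as the paper: the central observation is the scaling identity $g = g_\infty^{**} = (\alpha g_1)^{**} = \alpha g_1^{**}$, which decouples the rescaled objective as the sum of a nonnegative ``distance-to-$\calO$'' term and $\calG_1$. The paper's proof is terser and only explicitly treats the case $u\notin\calO$ in the liminf inequality (relying on \ref{enm:weakWeakContinuity} to force blow-up), whereas you also spell out the case $u\in\calO$, where nonnegativity of the first summand plus weak sequential lower semicontinuity of $\calG_1$ gives $\liminf_n F_{\alpha_n}(u_n)\geq\calG_1(u)$. That extra step is actually needed for a complete $\Gamma$-liminf argument, so your version is the more careful one; the recovery-sequence argument via the constant sequence is identical.
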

\begin{proof}
    The limsup inequality is trivial using the constant sequence; for the liminf inequality we only have to consider a sequence $u_\alpha\rightharpoonup u\notin \calO$.
    In that case, 
    \begin{equation}
        \liminf_{\alpha\to0}\norm{S(u_\alpha)-z}_Y\geq\norm{S(u)-z}_Y>M
    \end{equation}
    so that 
    \begin{equation}
        \begin{split}
            \frac1\alpha\left(\min_{u \in U} \frac12\norm{S(u) - z}_Y^2 + \int_{\Omega}  g(u(x))\,\dd x-\frac{M^2}2\right)\to\infty.
            \qedhere
        \end{split}
    \end{equation}
\end{proof}

\begin{corollary}[approximation of target]
    Under the conditions of the previous proposition, if $\calO\neq\emptyset$, then any family $\{u_\alpha\}_{\alpha>0}$ of minimizers of $\calE^z$ contains a subsequence converging weakly to a minimizer $\bar u\in \calO$ of $\calG_1$.
\end{corollary}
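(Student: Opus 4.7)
The plan is to derive the corollary as a direct consequence of the $\Gamma$-convergence result of the previous proposition combined with the fundamental theorem of $\Gamma$-convergence. Set $F_\alpha := \tfrac{1}{\alpha}\bigl(\calE^z - \tfrac{M^2}{2}\bigr)$; since subtracting a constant and scaling by the positive factor $1/\alpha$ does not affect the argmin, a minimizer $u_\alpha$ of $\calE^z$ is equivalently a minimizer of $F_\alpha$, and the previous proposition asserts $\Gamma\text{-}\lim F_\alpha = \delta_\calO + \calG_1$ with respect to weak convergence in $U$.

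Next I would verify equi-coercivity of the family $\{F_\alpha\}_{\alpha>0}$. Any $u$ with $F_\alpha(u) < \infty$ has $\calG_1(u) < \infty$, hence $u(x) \in \co\calM$ for a.e.\ $x \in \Omega$. Since $\co\calM$ is bounded in $\R^m$, this yields a uniform $L^\infty$-bound, and thus a uniform $L^2$-bound, on every sublevel set of $F_\alpha$, independently of $\alpha$. Consequently all such sublevel sets sit inside a single weakly (sequentially) compact subset of $U$.

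Equi-coercivity then guarantees that the family of minimizers $\{u_\alpha\}_{\alpha>0}$ is uniformly bounded in $U$ and admits a weakly convergent subsequence $u_\alpha \rightharpoonup \bar u$. By the fundamental theorem of $\Gamma$-convergence, this weak limit $\bar u$ is a minimizer of $\delta_\calO + \calG_1$. Because this functional equals $+\infty$ outside $\calO$, the minimum (which is finite since $\calO\neq\emptyset$ and, as just noted, $\bar u\in\dom\calG_1$ because $\{u:u(x)\in\co\calM\text{ a.e.}\}$ is weakly closed) forces $\bar u \in \calO$; and among elements of $\calO$, the limit $\bar u$ minimizes $\calG_1$, which is precisely the stated conclusion.

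The argument is essentially a routine application of a well-known principle, and the only non-trivial ingredient is the equi-coercivity. That step is made transparent by the observation that the effective domain of every $F_\alpha$ is contained in the fixed bounded set $\{u : u(x)\in\co\calM\text{ a.e.}\}$, so the $\alpha$-dependence of the penalty $g$ does not compromise the uniform bound underlying coercivity.
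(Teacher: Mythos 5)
Your proof follows the same route as the paper's, which simply invokes equi-mild coercivity together with the fundamental theorem of $\Gamma$-convergence from Braides (Def.\,1.19 and Thm.\,1.21). You unpack that citation: reduce to the rescaled functionals $F_\alpha$, derive equi-coercivity from the fact that every effective domain sits inside the fixed bounded, weakly closed set $\{u: u(x)\in\co\calM\text{ a.e.}\}$, and apply the fundamental theorem to extract a weakly convergent subsequence whose limit minimizes the $\Gamma$-limit $\delta_\calO + \calG_1$. The only loose end is your parenthetical justification that $\min(\delta_\calO+\calG_1)$ is finite ``since $\calO\neq\emptyset$ and $\bar u\in\dom\calG_1$'': this is circular, because $\bar u\in\dom\calG_1$ only helps if one already knows $\bar u\in\calO$, which is what you are trying to conclude. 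What is actually needed is $\lim_\alpha\inf F_\alpha<\infty$ (equivalently, that $\calO\cap\dom\calG_1\neq\emptyset$); that said, the paper's own one-line proof is silent on this same point, so your argument is at the same level of rigor as the original.
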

\begin{proof}
    This follows from the equi-mild coerciveness of the energies and the $\Gamma$-convergence, see \cite[Def.\,1.19 \& Thm.\,1.21]{Braides:2002}.
\end{proof}

\section{Vector-valued multibang penalty}\label{sec:penalty}

To implement the general framework of \cref{sec:existence}, we need explicit characterizations of the Fenchel conjugate and its subdifferential as well as its Moreau--Yosida regularization.
Here we consider the specific multibang penalty \eqref{eq:reg} for the choice $c(v) = \frac12|v|_2^2$, i.e., $\calG$ is defined as an integral functional for the normal integrand 
\begin{equation}\label{eq:multibangPenalty}
    g = \left(\frac\alpha2 |\cdot|_2^2 +\delta_\calM\right)^{**}=g_\infty^{**}.
\end{equation}
We can thus proceed by pointwise computation, where we need to differentiate based on the specific choice of the admissible set $\calM$.

We first summarize the general procedure. Since $g^* = (g_\infty^{**})^* = (g_\infty^*)^{**} = g_\infty^*$, the Legendre--Fenchel conjugate of $g$ is given by
\begin{equation}\label{eq:conj}
    g^*(q) = g_\infty^*(q) = \sup_{v\in\R^m} \scalprod{v,q} - g_\infty(v) = \max_{v\in \calM}\,  \scalprod{v,q} - \tfrac\alpha2 |v|_2^2.
\end{equation}
Hence, $g^*$ is the maximum of a finite number of convex and continuous  functions, and we can thus compute the subdifferential using the maximum rule; see, e.g., \cite[Prop.~4.5.2, Rem.~4.5.3]{Schirotzek:2007}. Setting 
\begin{equation}
    g_v^*(q) \defgl \scalprod{v,q} - \tfrac\alpha2 |v|_2^2,
\end{equation}
we have
\begin{equation}\label{eq:conj_subdiff}
    \partial g^*(q) = \co \bigcup_{\substack{v\in\calM:\\ g^*(q) = g_v^*(q)}}\partial g_v^*(q) = \co \left\{v\in\calM:g^*(q) = g_v^*(q)\right\}
\end{equation}
with $\co$ denoting the convex hull.
Finally, for the proximal mapping 
\begin{equation}
    \prox_{\gamma g^*}(q) := \argmin_{w\in\R^m} \frac1{2\gamma} |w-q|_2^2 + g^*(w) =  (\Id + \gamma \partial g^*)^{-1}(q)\,,
\end{equation}
we will make use of the equivalence 
\begin{equation}\label{eq:proxMapRelation}
    w = (\Id + \gamma \partial g^*)^{-1}(q)\qquad \Leftrightarrow\qquad q \in (\Id + \gamma \partial g^*)(w) = \{w\} + \gamma \partial g^*(w)
\end{equation}
and follow the case distinction in the maximum rule \eqref{eq:conj_subdiff}.
The Moreau--Yosida regularization of $\partial g^*$ is then given by
\begin{equation}\label{eq:my_reg}
    (\partial g^*)_{\gamma}(q) = \frac{1}{\gamma}\left(q - \prox_{\gamma g^*}(q)\right).
\end{equation}
For details, we refer to, e.g., \cite{Bauschke:2011}.

\subsection{Radially distributed control values}\label{sec:multibang:bloch}

Here, we take as set $\calM\subset\R^2$ of  admissible control values  the vector $0$ together with vectors of fixed amplitude $\omega_0>0$ and $M>2$ equidistributed phases
\begin{equation}
    0\leq\theta_1<\ldots<\theta_M<2\pi
\end{equation}
(where we shall assume $\theta_{i+1}-\theta_i<\pi$ for $i=1,\ldots,M-1$ and $\theta_1-(\theta_M-2\pi)<\pi$),
that is,
\begin{equation}\label{eq:setBloch}
\calM = \left\{ \left(\begin{smallmatrix} 0 \\ 0 \end{smallmatrix}\right), \left(\begin{smallmatrix} \omega_0 \cos\theta_1 \\ \omega_0 \sin \theta_1 \end{smallmatrix}\right), \dots, \left(\begin{smallmatrix} \omega_0 \cos\theta_M \\ \omega_0 \sin \theta_M \end{smallmatrix}\right)\right\} \eqqcolon \left\{ \u_0, \u_1, \dots \u_M\right\}\,.
\end{equation}

In the following it will be helpful to identify an angle $\theta\in[0,2\pi)$ with the corresponding point $\vec\theta=(\cos\theta,\sin\theta)$ on the unit circle $S^1$.
Let $\phi_i$ denote the midpoint between $\theta_i$ and $\theta_{i+1}$ (identifying $\theta_{M+1}=\theta_1$ for simplicity), that is, $\vec\phi_i=(\vec\theta_i+\vec\theta_{i+1})/|\vec\theta_i+\vec\theta_{i+1}|_2$, and introduce the circular sectors
\begin{equation}
    C_i=\left\{\omega\vec\theta\in\R^2:\theta\in(\phi_i,\phi_{i+1}),\,\omega\geq0\right\}\,.
\end{equation}
Here, $\theta\in(\phi_i,\phi_{i+1})$ is to be understood $2\pi$-periodically, that is, $\phi_{M+1}$ shall be identified with $\phi_1$, and $(\phi_i,\phi_{i+1})$ with $\phi_{i+1}<\phi_i$ shall be interpreted as $(\phi_i,\phi_{i+1}+2\pi)$.

\paragraph{Fenchel conjugate}

Using the equivalence of angles and sectors introduced above, it is straightforward to see
\begin{equation}
    \scalprod{q, \u_i} \geq \scalprod{q, \u_j}\;\text{ for all } q\in\overline{C_i},\,j\neq0\,.
\end{equation}
Thus, inserting the concrete choice of $\calM$ into \eqref{eq:conj}, we obtain
\begin{equation}
    g^*(q) =
    \begin{cases} 
        0 &  \text{if } \scalprod{q, \u_i} \leq  \frac\alpha2\omega_0^2\text{ for all } 1\leq i\leq M,\\
        \scalprod{q,\u_i} - \frac\alpha2\omega_0^2 &  \text{if }q\in\overline{C_i}\text{ and }\scalprod{q, \u_i} \geq \frac\alpha2\omega_0^2.
    \end{cases}
\end{equation}
Let us therefore introduce the sets (cf.~\cref{fig:subdomainsbloch:subdiff})
\begin{align}
    Q_0 &\defgl  \left\{ q \in \R^2 :\scalprod{q, \u_i} < \tfrac\alpha2\omega_0^2\text{ for all } 1\leq i\leq M\right\} ,\\
    Q_i &\defgl  \left\{ q \in C_i :\scalprod{q, \u_i} > \tfrac\alpha2\omega_0^2 \right\},&1\leq i\leq M,\\
    Q_{i_1\ldots i_k} &\defgl \bigcap_{i\in\{i_1,\ldots,i_k\}}\overline{Q_i}\setminus\bigcup_{i\notin\{i_1,\ldots,i_k\}}\overline{Q_i},&0\leq i_1,\ldots,i_k\leq M.
\end{align}
With this notation we obtain
\begin{equation}
    g^*(q)=
    \begin{cases} 
        0 &  \text{if } q\in\overline{Q_0},\\
        \scalprod{q,\u_i} - \frac\alpha2\omega_0^2 &  \text{if }q\in\overline{Q_i},\quad 1\leq i\leq M.
    \end{cases}
\end{equation}

\paragraph{Subdifferential}

From the maximum rule \eqref{eq:conj_subdiff}, we directly obtain
\begin{equation}\label{eq:conj_subdiff_radial}
    \partial g^*(q)= 
    \begin{cases}
        \{\u_i\}&\text{if }q\in Q_i,\quad\qquad 0\leq i\leq M,\\
        \co\{\u_{i_1},\ldots,\u_{i_k}\}&\text{if }q\in Q_{i_1\ldots i_k},\quad 0\leq i_1,\ldots,i_k\leq M.
    \end{cases}
\end{equation}

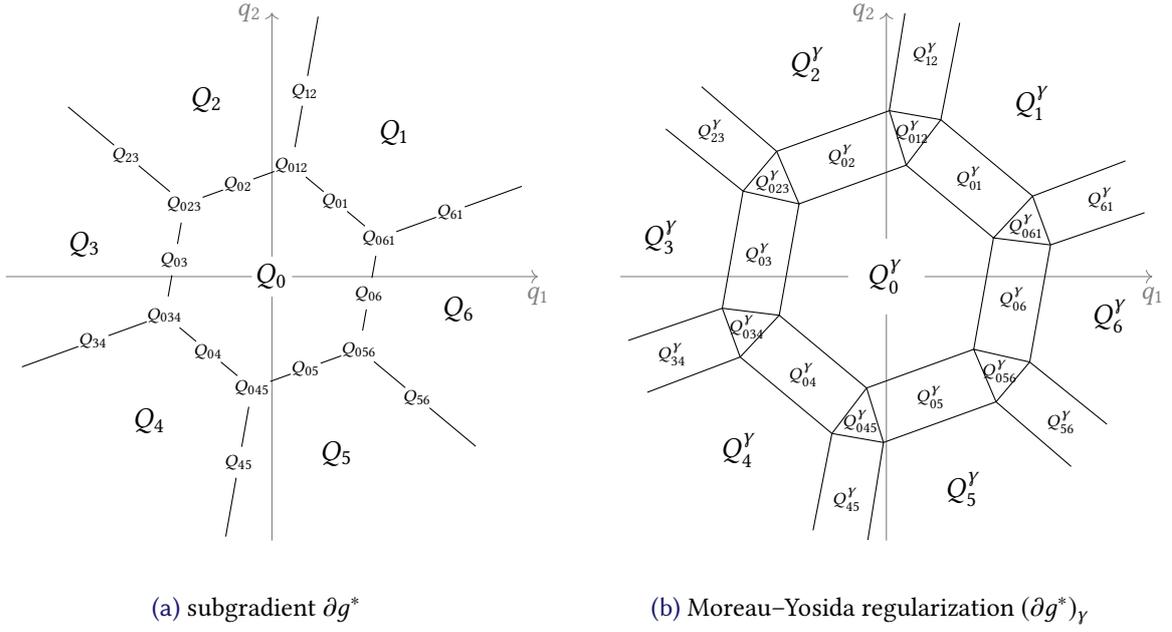
\begin{figure}[t]
    \begin{subfigure}[t]{0.45\linewidth}
        \centering
        \begin{tikzpicture}
    \draw[gray,->](0,-3.5) -- (0,3.5) node[left] {\small $q_2$};
    \draw[gray,->](-3.5,0) -- (3.5,0) node[below] {\small $q_1$};
    \draw (0,0) node[circle,fill=white,inner sep=0pt] {$Q_0$};
    \foreach \x  in {1,...,6}{
        \coordinate (q\x) at (20+\x*60:1.5cm);
        \coordinate (Q\x) at (20+\x*60:3.5cm);
        \node at (\x*60-10:2.5cm) {$Q_\x$};
        \draw (q\x) -- (Q\x);
    }
    \draw (q1) -- (q2) -- (q3) -- (q4) -- (q5) -- (q6) -- cycle;
    \foreach \x [evaluate=\x as \xnext using {int(mod(\x,6)+1)}] in {1,...,6}{
        \draw (\x*60-10:1.3cm) node[circle,fill=white,inner sep=0pt] {\tiny $Q_{0\x}$};
        \draw (20+\x*60:2.5cm) node[circle,fill=white,inner sep=0pt] {\tiny $Q_{\x\xnext}$};
        \draw (20+\x*60:1.5cm) node[circle,fill=white,inner sep=0pt] {\tiny $Q_{0\x\xnext}$};
    }
\end{tikzpicture}
        \caption{subgradient $\partial g^*$}\label{fig:subdomainsbloch:subdiff}
    \end{subfigure}
    \hfill
    \begin{subfigure}[t]{0.45\linewidth}
        \centering
        \begin{tikzpicture}
    \draw[gray,->](0,-3.5) -- (0,3.5) node[left] {\small $q_2$};
    \draw[gray,->](-3.5,0) -- (3.5,0) node[below] {\small $q_1$};
    \draw (0,0) node[circle,fill=white] {$Q^\gamma_0$};
    \foreach \x [evaluate=\x as \xnext using {int(mod(\x,6)+1)}] in {1,...,6}{
        \coordinate (q\x) at (20+\x*60:1.5cm);
        \coordinate (q\x1) at (20+\x*60-9:2.2cm);
        \coordinate (q\x2) at (20+\x*60+9:2.2cm);
        \coordinate (Q\x1) at (20+\x*60-6:3.5cm);
        \coordinate (Q\x2) at (20+\x*60+6:3.5cm);
        \node at (\x*60-10:3cm) {$Q^\gamma_\x$};
        \node at (\x*60-10:1.7cm) {\tiny $Q^\gamma_{0\x}$};
        \node at (20+\x*60:3cm) {\tiny $Q^\gamma_{\x\xnext}$};
        \node at (20+\x*60:1.95cm) {\tiny $Q^\gamma_{0\x\xnext}$};
    }
    \foreach \x [evaluate=\x as \xnext using {int(mod(\x,6)+1)}] in {1,...,6}{
        \draw (Q\x1) -- (q\x1) -- (q\x) -- (q\x2) -- (Q\x2);
        \draw (q\x) -- (q\xnext);
        \draw (q\x1) -- (q\x2) -- (q\xnext1);
    }
\end{tikzpicture}
        \caption{Moreau--Yosida regularization $(\partial g^*)_\gamma$}\label{fig:subdomainsbloch:my}
    \end{subfigure}
    \caption{Subdomains for radially distributed $\calM$}
    \label{fig:subdomainsbloch}
\end{figure}

\paragraph{Proximal mapping}

Here, we proceed as follows: For each $Q_{i_1\ldots i_k}$, we 
\begin{enumerate}
    \item compute the set $Q_{i_1\ldots i_k}^\gamma\defgl(\Id+\gamma\partial g^*)(Q_{i_1\ldots i_k})$;
    \item  solve for $w\in Q_{i_1\ldots i_k}$ the relation $q\in\{w\}+\gamma\partial g^*(w)$ for arbitrary $q\in Q_{i_1\ldots i_k}^\gamma$.
\end{enumerate}
By \eqref{eq:proxMapRelation}, we then have  $w=\prox_{\gamma g^*}(q)$.
The details are provided in \cref{tab:prox_radial}, while the sets  $Q_{i_1\ldots i_k}^\gamma$ are visualized in \cref{fig:subdomainsbloch:my}.

\begin{table}
    \caption{Computation of proximal map for radially distributed control values ($i+1$ is to be understood modulo $M$)}
    \label{tab:prox_radial}
    \begin{tabular}{llll}
        \toprule
        $Q_{i_1\ldots i_k}$ & $(\Id+\gamma\partial g^*)(w)$  & $Q_{i_1\ldots i_k}^\gamma$               & $(\Id+\gamma\partial g^*)^{-1}(q)$\\
        \midrule
        $Q_0$               & $w$                            & $Q_0$                                    & $q$\\
        $Q_i$               & $w+\gamma \u_i$                 & $Q_i+\gamma \u_i$                         & $q-\gamma \u_i$\\
        $Q_{0,i}$            & $w+\gamma\co\{0,\u_i\}$         & $Q_{0,i}+[0,\gamma]\u_i$                   & $q - \left(\frac{\scalprod{q,\u_i}}{\omega_0^2} - \frac\alpha2\right)\u_i$\\
        $Q_{i,i+1}$         & $w+\gamma\co\{\u_i,\u_{i+1}\}$   & $Q_{i,i+1}+\gamma\co\{\u_i,\u_{i+1}\}$     & $q-\frac{\gamma(\u_i + \u_{i+1})}2-\frac{\scalprod{q,\u_i-\u_{i+1}}(\u_i-\u_{i+1})}{|\u_i-\u_{i+1}|_2^2}$\\
        $Q_{0,i,i+1}$       & $w+\gamma\co\{0,\u_i,\u_{i+1}\}$ & $Q_{0,i,i+1}+\gamma\co\{0,\u_i,\u_{i+1}\}$ & $\alpha\left(\frac{\omega_0}{|\u_i + \u_{i+1}|_2}\right)^2 (\u_i + \u_{i+1})$\\
        \bottomrule
    \end{tabular}
\end{table}

To explain the case $Q_{0,i}$, note that for $q\in Q_{0,i}^\gamma$ we must have by definition of the set $Q_{0,i}^\gamma$ that 
\begin{equation}
    (\Id+\gamma\partial g^*)^{-1}(q)=q-\lambda \u_i \in Q_{0,i}\subset\left\{v\in\R^2:\scalprod{v,\u_i}=\frac\alpha2\omega_0^2\right\}
\end{equation}
for an appropriate choice of $\lambda\in[0,\gamma]$. Thus,
\begin{equation}
    \scalprod{q-\lambda \u_i,\u_i}=\tfrac\alpha2\omega_0^2
    \quad\text{ and so }\quad
    \lambda=\frac{\scalprod{q,\u_i}}{\omega_0^2} - \frac\alpha2\,.
\end{equation}

Likewise, for $q\in Q_{i,i+1}^\gamma$ we must have 
\begin{equation}
    (\Id+\gamma\partial g^*)^{-1}(q)=q-\lambda \u_i-(\gamma-\lambda) \u_{i+1}\in Q_{i,i+1}\subset(\u_i-\u_{i+1})^\perp
\end{equation}
for some $\lambda\in[0,\gamma]$. Thus,
\begin{equation}
    0=\scalprod{q-\lambda \u_i-(\gamma-\lambda) \u_{i+1},\u_i-\u_{i+1}}=\scalprod{q,\u_i-\u_{i+1}}+(\tfrac\gamma2-\lambda)|\u_i-\u_{i+1}|_2^2
\end{equation}
and so
\begin{equation}
    \lambda=\frac\gamma2+\frac{\scalprod{q,\u_i-\u_{i+1}}}{|\u_i-\u_{i+1}|_2^2}\,.
\end{equation}

Finally, note that $Q_{0,i,i+1}=\{\alpha(\frac{\omega_0}{|\u_i + \u_{i+1}|_2})^2 (\u_i + \u_{i+1})\}$ only contains a single element,
which must therefore be equal to $(\Id+\gamma\partial g^*)^{-1}(q)$ for all $q\in Q_{0,i,i+1}^\gamma$.

\paragraph{Moreau--Yosida regularization}

Inserting the above into definition \eqref{eq:my_reg} of the Moreau--Yosida regularization yields
\begin{equation}\label{eq:my_bloch}
    (\partial g^*)_{\gamma}(q) =
    \begin{cases}
        0 & \text{ if } q \in Q_0^{\gamma}, \\
        \u_i & \text{ if } q \in Q_i^{\gamma}, \\
        \left(\tfrac{\scalprod{q, \u_i}}{\gamma \omega_0^2} - \tfrac{\alpha}{2\gamma}\right) \u_i & \text{ if } q \in Q_{0,i}^{\gamma}, \\
        \frac{\u_i + \u_{i+1}}2+\frac{\scalprod{q,\u_i-\u_{i+1}}(\u_i-\u_{i+1})}{\gamma|\u_i-\u_{i+1}|_2^2}  & \text{ if } q \in Q_{i,i+1}^\gamma,\\
        \frac{q}{\gamma} - \frac\alpha{\gamma}\left(\frac{\omega_0}{|\u_i + \u_{i+1}|_2}\right)^2 \left(\u_i + \u_{i+1}\right) &\text{ if } q \in Q_{0,i,i+1}^\gamma.
    \end{cases} 
\end{equation}
Finally, in a numerical implementation it will be necessary to identify efficiently for a given $q\in\R^2$ the set $Q_{i_1\ldots i_k}^\gamma$ in which it is contained.
To this end, determine $i_q,j_q,k_q\in \{1,\ldots,M\}$ via
\begin{equation}
    q\in\overline{C_{i_q}}\,,\qquad
    q-\gamma \u_{i_q}\in\overline{C_{j_q}}\,,\qquad
    q - \left(\frac{\scalprod{q,\u_{i_q}}}{\omega_0^2} - \frac\alpha2\right)\u_{i_q}\in\overline{C_{k_q}}\,,
\end{equation}
and set 
\begin{equation}
    \rho_q:=\scalprod{q,\u_{i_q}}\,,\qquad
    \sigma_q:=\scalprod{q-\tfrac\gamma2(\u_{i_q}+\u_{j_q}),\u_{i_q}+\u_{j_q}}\,.
\end{equation}
Now it is straightforward to identify the correct subdomain via
\begin{align}
    Q_0^\gamma&=\left\{q\in\R^2:\rho_q<\tfrac\alpha2\omega_0^2\right\}\,,\\
    Q_i^\gamma&=\left\{q\in\R^2:\rho_q>(\tfrac\alpha2+\gamma)\omega_0^2,\,i_q=i,\,j_q=i\right\}\,,\\
    Q_{0,i}^\gamma&=\left\{q\in\R^2: \tfrac\alpha2\omega_0^2\leq\rho_q\leq (\tfrac\alpha2+\gamma)\omega_0^2,\,i_q=i,\,k_q=i\right\}\,,\\
    Q_{i,i+1}^\gamma&=\left\{q\in\R^2:\{i,i+1\}=\{i_q,j_q\},\,\sigma_q>\alpha\omega_0^2\right\}\,,\\
    Q_{0,i,i+1}^\gamma&=\left\{q\in\R^2:\{i,i+1\}=\{i_q,i_q+\sign(\u_{i_q}\times q)\},\,k_q\neq i_q,\,\sigma_q\leq\alpha\omega_0^2\right\}\,.
\end{align}

\paragraph{Newton derivative}

Since proximal mappings are Lipschitz continuous and we are in a finite-dimensional setting, a Newton derivative of $h_\gamma:=(\partial g^*)_\gamma$ is given by any choice
\begin{equation}
    D_Nh_\gamma(q) \in \partial_C h_\gamma(q) = \co \left\{\lim_{n\to\infty} \nabla h_\gamma(q_n)\right\},
\end{equation}
where $\partial_C$ denotes Clarke's generalized gradient which admits an explicit characterization by Rademacher's theorem; see, e.g., \cite{Clarke:2013}. We can further use that $h_\gamma$ is continuous and piecewise continuously differentiable and take
\begin{equation}\label{eq:newton_bloch}
    D_N h_{\gamma}(q) = \begin{cases}
        0  & \text{ if } q \in Q_i^{\gamma}, \\
        \frac{1}{\gamma\omega_0^2} \u_i\u_i^T & \text{ if } q \in Q_{0,i}^{\gamma},\\
        \frac{1}{\gamma\abs{\u_i-\u_{i+1}}_2^2}(\u_i - \u_{i+1})(\u_i - \u_{i+1})^T   & \text{ if } q \in Q_{i,i+1}^{\gamma},\\
        \frac{1}{\gamma}\Id & \text{ if }  q \in Q_{0,i,i+1}^{\gamma}. 
    \end{cases}
\end{equation}

\subsection{Concentric corners}\label{sec:multibang:elast}

We now address the case of admissible control values of different magnitudes, where we consider for the sake of an example the concrete set
\begin{equation}\label{eq:M_elast}
    \begin{aligned}
    \calM &= \left\{ \begin{pmatrix} 1 \\ 1 \end{pmatrix}, \begin{pmatrix} 1 \\ -1 \end{pmatrix}, \begin{pmatrix} -1 \\ 1 \end{pmatrix}, \begin{pmatrix} -1 \\ -1 \end{pmatrix}, \begin{pmatrix} 2 \\ 2 \end{pmatrix}, \begin{pmatrix} 2 \\ -2 \end{pmatrix}, \begin{pmatrix} -2 \\ 2 \end{pmatrix}, \begin{pmatrix} -2 \\ -2 \end{pmatrix}\right\} \\[0.5em]
          &= \left\{\u_{1,1}^1, \u_{1,-1}^1, \u_{-1,1}^1, \u_{-1,-1}^1, \u_{1,1}^2, \u_{1,-1}^2, \u_{-1,1}^2, \u_{-1,-1}^2\right\}.
    \end{aligned}
\end{equation}

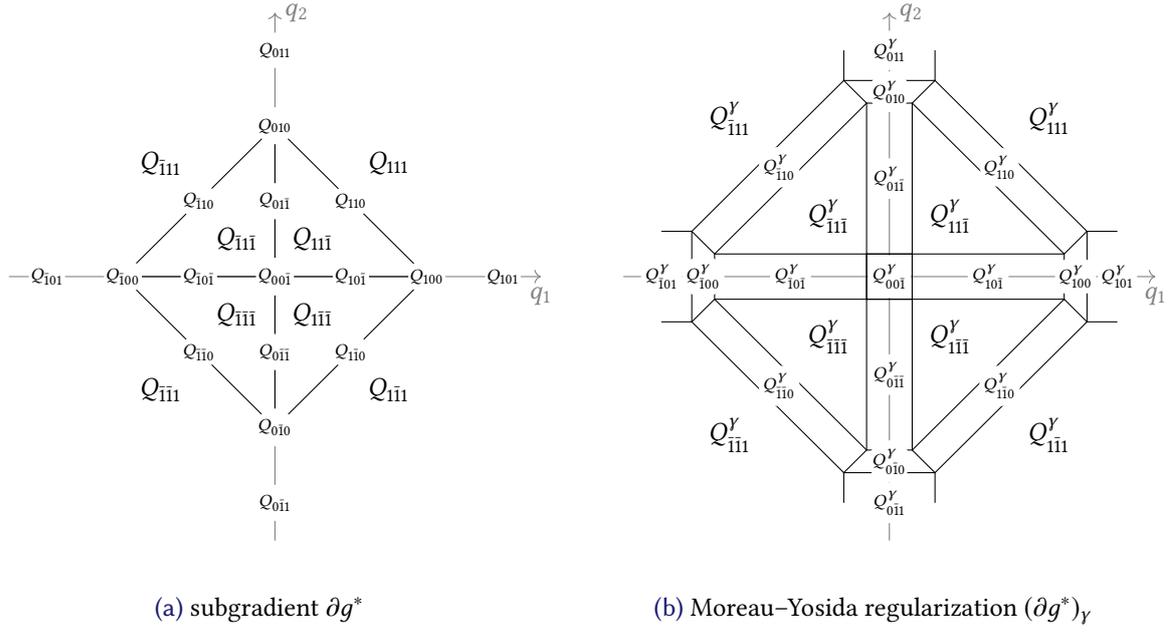
\begin{figure}[t]
    \begin{subfigure}[t]{0.45\linewidth}
        \centering
        \begin{tikzpicture}
    \draw[gray,->](0,-3.5) -- (0,3.5) node[right] {\small $q_2$};
    \draw[gray,->](-3.5,0) -- (3.5,0) node[below] {\small $q_1$};

    \draw (2,0) -- (0,2) -- (-2,0) -- (0,-2) -- cycle;
    \draw(0,-2) -- (0,2);
    \draw(-2,0) -- (2,0);

    \draw (0,0) node[circle,fill=white,inner sep=0pt] {\tiny $Q_{00\bar1}$};
    \draw (0.5,0.5) node[circle,fill=white,inner sep=0pt] {\small$Q_{11\bar1}$};
    \draw (0.5,-0.5) node[circle,fill=white,inner sep=0pt] {\small$Q_{1\bar1\bar1}$};
    \draw (-0.5,0.5) node[circle,fill=white,inner sep=0pt] {\small$Q_{\bar11\bar1}$};
    \draw (-0.5,-0.5) node[circle,fill=white,inner sep=0pt] {\small$Q_{\bar1\bar1\bar1}$};

    \draw (1.5,1.5) node[circle,fill=white,inner sep=0pt] {\small$Q_{111}$};
    \draw (1.5,-1.5) node[circle,fill=white,inner sep=0pt] {\small$Q_{1\bar11}$};
    \draw (-1.5,1.5) node[circle,fill=white,inner sep=0pt] {\small$Q_{\bar111}$};
    \draw (-1.5,-1.5) node[circle,fill=white,inner sep=0pt] {\small$Q_{\bar1\bar11}$};

    \draw (1,0) node[circle,fill=white,inner sep=0pt] {\tiny $Q_{10\bar1}$};
    \draw (0,1) node[circle,fill=white,inner sep=0pt] {\tiny $Q_{01\bar1}$};
    \draw (-1,0) node[circle,fill=white,inner sep=0pt] {\tiny $Q_{\bar10\bar1}$};
    \draw (0,-1) node[circle,fill=white,inner sep=0pt] {\tiny $Q_{0\bar1\bar1}$};

    \draw (3,0) node[circle,fill=white,inner sep=0pt] {\tiny $Q_{101}$};
    \draw (0,3) node[circle,fill=white,inner sep=0pt] {\tiny $Q_{011}$};
    \draw (-3,0) node[circle,fill=white,inner sep=0pt] {\tiny $Q_{\bar101}$};
    \draw (0,-3) node[circle,fill=white,inner sep=0pt] {\tiny $Q_{0\bar11}$};

    \draw (1,1) node[circle,fill=white,inner sep=0pt] {\tiny $Q_{110}$};
    \draw (1,-1) node[circle,fill=white,inner sep=0pt] {\tiny $Q_{1\bar10}$};
    \draw (-1,1) node[circle,fill=white,inner sep=0pt] {\tiny $Q_{\bar110}$};
    \draw (-1,-1) node[circle,fill=white,inner sep=0pt] {\tiny $Q_{\bar1\bar10}$};

    \draw (2,0) node[circle,fill=white,inner sep=0pt] {\tiny $Q_{100}$};
    \draw (0,2) node[circle,fill=white,inner sep=0pt] {\tiny $Q_{010}$};
    \draw (-2,0) node[circle,fill=white,inner sep=0pt] {\tiny $Q_{\bar100}$};
    \draw (0,-2) node[circle,fill=white,inner sep=0pt] {\tiny $Q_{0\bar10}$};
\end{tikzpicture}
        \caption{subgradient $\partial g^*$}\label{fig:subdomainselast:subdiff}
    \end{subfigure}
    \hfill
    \begin{subfigure}[t]{0.45\linewidth}
        \centering
        \begin{tikzpicture}
    \draw[gray,->](0,-3.5) -- (0,3.5) node[right] {\small $q_2$};
    \draw[gray,->](-3.5,0) -- (3.5,0) node[below] {\small $q_1$};

    \draw (0.3,0.3) -- (0.3,-0.3) -- (-0.3,-0.3) -- (-0.3,0.3) -- cycle;
    \draw (0.3,0.3) -- (0.3,2.3) -- (-0.3,2.3) -- (-0.3,0.3) -- cycle;
    \draw (0.3,-0.3) -- (0.3,-2.3) -- (-0.3,-2.3) -- (-0.3,-0.3) -- cycle;
    \draw (0.3,0.3) -- (2.3,0.3) -- (2.3,-0.3) -- (0.3,-0.3) -- cycle;
    \draw (-0.3,-0.3) -- (-2.3,-0.3) -- (-2.3,0.3) -- (-0.3,0.3) -- cycle;

    \draw (0.6,3) -- (0.6,2.6) -- (-0.6,2.6) -- (-0.6,3);
    \draw (0.6,-3) -- (0.6,-2.6) -- (-0.6,-2.6) -- (-0.6,-3);
    \draw (3,0.6) -- (2.6,0.6) -- (2.6,-0.6) -- (3,-0.6);
    \draw (-3,0.6) -- (-2.6,0.6) -- (-2.6,-0.6) -- (-3,-0.6);

    \draw (2.3,0.3) -- (0.3,2.3) -- (0.6,2.6) -- (2.6,0.6) -- cycle;
    \draw (2.3,-0.3) -- (0.3,-2.3) -- (0.6,-2.6) -- (2.6,-0.6) -- cycle;
    \draw (-2.3,0.3) -- (-0.3,2.3) -- (-0.6,2.6) -- (-2.6,0.6) -- cycle;
    \draw (-2.3,-0.3) -- (-0.3,-2.3) -- (-0.6,-2.6) -- (-2.6,-0.6) -- cycle;
    
    \draw (0,0) node[circle,fill=white,inner sep=0pt] {\tiny $Q^\gamma_{00\bar1}$};
    \draw (0.8,0.8) node[circle,fill=white,inner sep=0pt] {\small$Q^\gamma_{11\bar1}$};
    \draw (0.8,-0.8) node[circle,fill=white,inner sep=0pt] {\small$Q^\gamma_{1\bar1\bar1}$};
    \draw (-0.8,0.8) node[circle,fill=white,inner sep=0pt] {\small$Q^\gamma_{\bar11\bar1}$};
    \draw (-0.8,-0.8) node[circle,fill=white,inner sep=0pt] {\small$Q^\gamma_{\bar1\bar1\bar1}$};

    \draw (2.1,2.1) node[circle,fill=white,inner sep=0pt] {\small$Q^\gamma_{111}$};
    \draw (2.1,-2.1) node[circle,fill=white,inner sep=0pt] {\small$Q^\gamma_{1\bar11}$};
    \draw (-2.1,2.1) node[circle,fill=white,inner sep=0pt] {\small$Q^\gamma_{\bar111}$};
    \draw (-2.1,-2.1) node[circle,fill=white,inner sep=0pt] {\small$Q^\gamma_{\bar1\bar11}$};

    \draw (1.3,0) node[circle,fill=white,inner sep=0pt] {\tiny $Q^\gamma_{10\bar1}$};
    \draw (0,1.3) node[circle,fill=white,inner sep=0pt] {\tiny $Q^\gamma_{01\bar1}$};
    \draw (-1.3,0) node[circle,fill=white,inner sep=0pt] {\tiny $Q^\gamma_{\bar10\bar1}$};
    \draw (0,-1.3) node[circle,fill=white,inner sep=0pt] {\tiny $Q^\gamma_{0\bar1\bar1}$};

    \draw (3,0) node[circle,fill=white,inner sep=0pt] {\tiny $Q^\gamma_{101}$};
    \draw (0,3) node[circle,fill=white,inner sep=0pt] {\tiny $Q^\gamma_{011}$};
    \draw (-3,0) node[circle,fill=white,inner sep=0pt] {\tiny $Q^\gamma_{\bar101}$};
    \draw (0,-3) node[circle,fill=white,inner sep=0pt] {\tiny $Q^\gamma_{0\bar11}$};

    \draw (1.45,1.45) node[circle,fill=white,inner sep=0pt] {\tiny $Q^\gamma_{110}$};
    \draw (1.45,-1.45) node[circle,fill=white,inner sep=0pt] {\tiny $Q^\gamma_{1\bar10}$};
    \draw (-1.45,1.45) node[circle,fill=white,inner sep=0pt] {\tiny $Q^\gamma_{\bar110}$};
    \draw (-1.45,-1.45) node[circle,fill=white,inner sep=0pt] {\tiny $Q^\gamma_{\bar1\bar10}$};

    \draw (2.45,0) node[circle,fill=white,inner sep=0pt] {\tiny $Q^\gamma_{100}$};
    \draw (0,2.45) node[circle,fill=white,inner sep=0pt] {\tiny $Q^\gamma_{010}$};
    \draw (-2.45,0) node[circle,fill=white,inner sep=0pt] {\tiny $Q^\gamma_{\bar100}$};
    \draw (0,-2.45) node[circle,fill=white,inner sep=0pt] {\tiny $Q^\gamma_{0\bar10}$};
\end{tikzpicture}
        \caption{Moreau--Yosida regularization $(\partial g^*)_\gamma$}\label{fig:subdomainselast:my}
    \end{subfigure}
    \caption{Subdomains for concentric corners, where $\bar1$ is written for $-1$ to simplify notation (the line dimensions are provided in \cref{fig:subdiff:elast:dimensions})}
    \label{fig:subdiff:elast}
\end{figure}
\begin{figure}[t]
    \begin{subfigure}[t]{0.45\linewidth}
        \centering
        \begin{tikzpicture}
    \draw[gray,->](0,-3.5) -- (0,3.5) node[right] {\small $q_2$};
    \draw[gray,->](-3.5,0) -- (3.5,0) node[below] {\small $q_1$};

    \draw (2,0) -- (0,2) -- (-2,0) -- (0,-2) -- cycle;
    \draw(0,-2) -- (0,2);
    \draw(-2,0) -- (2,0);

    \draw (2,0) node[below] {\small\hspace*{1ex}$3\alpha$};
    \draw (0,2) node[left] {\small$3\alpha$};
    \draw (-2,0) node[below] {\small $-3\alpha$\hspace*{1ex}};
    \draw (0,-2) node[left] {\small $-3\alpha$};
\end{tikzpicture}
        \caption{subgradient $\partial g^*$}\label{fig:dimensionselast:subdiff}
    \end{subfigure}
    \hfill
    \begin{subfigure}[t]{0.45\linewidth}
        \centering
        \begin{tikzpicture}
    \draw (0.3,0.3) -- (0.3,-0.3) -- (-0.3,-0.3) -- (-0.3,0.3) -- cycle;
    \draw (0.3,0.3) -- (0.3,2.3) -- (-0.3,2.3) -- (-0.3,0.3) -- cycle;
    \draw (0.3,-0.3) -- (0.3,-2.3) -- (-0.3,-2.3) -- (-0.3,-0.3) -- cycle;
    \draw (0.3,0.3) -- (2.3,0.3) -- (2.3,-0.3) -- (0.3,-0.3) -- cycle;
    \draw (-0.3,-0.3) -- (-2.3,-0.3) -- (-2.3,0.3) -- (-0.3,0.3) -- cycle;

    \draw (0.6,3) -- (0.6,2.6) -- (-0.6,2.6) -- (-0.6,3);
    \draw (0.6,-3) -- (0.6,-2.6) -- (-0.6,-2.6) -- (-0.6,-3);
    \draw (3,0.6) -- (2.6,0.6) -- (2.6,-0.6) -- (3,-0.6);
    \draw (-3,0.6) -- (-2.6,0.6) -- (-2.6,-0.6) -- (-3,-0.6);

    \draw (2.3,0.3) -- (0.3,2.3) -- (0.6,2.6) -- (2.6,0.6) -- cycle;
    \draw (2.3,-0.3) -- (0.3,-2.3) -- (0.6,-2.6) -- (2.6,-0.6) -- cycle;
    \draw (-2.3,0.3) -- (-0.3,2.3) -- (-0.6,2.6) -- (-2.6,0.6) -- cycle;
    \draw (-2.3,-0.3) -- (-0.3,-2.3) -- (-0.6,-2.6) -- (-2.6,-0.6) -- cycle;
    
    \draw [gray,line width = 3pt] (0,0.3) -- (2.3,0.3) -- (2.6,0.6) -- (3,0.6) node [above] {\small $\eta(q_1)$};
    
    \draw [white,line width = 15pt] (0,-0.6) -- (0.3,-0.6);
    \draw [white,line width = 20pt] (0,-1.2) -- (0.6,-1.2);
    \draw [white,line width = 20pt] (0,-1.8) -- (2.3,-1.8);
    \draw [white,line width = 35pt] (0,-2.4) -- (2.6,-2.4);
    
    \draw [<->] (0,-0.6) -- (0.3,-0.6);
    \draw [<->] (0,-1.2) -- (0.6,-1.2);
    \draw [<->] (0,-1.8) -- (2.3,-1.8);
    \draw [<->] (0,-2.4) -- (2.6,-2.4);

    \draw (0.15,-0.6) node[below]{\small $\gamma$};
    \draw (0.3,-1.2) node[below]{\small $2\gamma$};
    \draw (1.15,-1.8) node[below]{\small $3\alpha+\gamma$};
    \draw (1.3,-2.4) node[below]{\small $3\alpha+2\gamma$};

    \draw [gray,->] (0,-3.5) -- (0,3.5) node[right] {\small $q_2$};
    \draw [gray,->] (-3.5,0) -- (3.5,0) node[below] {\small $q_1$};
\end{tikzpicture}
        \caption{Moreau--Yosida regularization $(\partial g^*)_\gamma$}\label{fig:dimensionselast:my}
    \end{subfigure}
    \centering
    \caption{Dimensions for \cref{fig:subdiff:elast}}
    \label{fig:subdiff:elast:dimensions}
\end{figure}

\paragraph{Fenchel conjugate}
Again inserting $\calM$ into \eqref{eq:conj}, we see that the maximum is either attained by $v=(q_1/|q_1|,q_2/|q_2|)$ or by $v=2(q_1/|q_1|,q_2/|q_2|)$,
where in the case $q_i=0$ we may define $q_i/|q_i|\in\{-1,1\}$ arbitrarily. Hence we obtain after some algebraic manipulations
\begin{equation}
    g^*(q)
    =\max\left\{\abs{q}_1-\alpha,2\abs{q}_1-4\alpha\right\}
    =
    \begin{cases}
        \abs{q}_1 - \alpha & \text{if }\abs{q}_1 \leq 3\alpha,\\
        2\abs{q}_1 - 4\alpha & \text{if }\abs{q}_1 \geq 3\alpha.
    \end{cases}
\end{equation}

\paragraph{Subdifferential}

From \eqref{eq:conj_subdiff}, we directly obtain
\begin{equation}
    \partial g^*(q) = \co\bigcup_{\substack{i\in\{1,2\}: \\ g^*(q) = g_i^*(q)}} \partial g_i^*(q)
    \quad\text{ for }\quad
    \begin{cases}
        g_1^*(q) = \abs{q}_1 - \alpha,\\
        g_2^*(q) = 2\abs{q}_1 - 4\alpha.
    \end{cases}
\end{equation}
In the above we have
\begin{equation}
\partial g_1^*(q) = \begin{pmatrix} \sign(q_1) \\ \sign(q_2)\end{pmatrix},
    \qquad
\partial g_2^*(q) = 2\begin{pmatrix} \sign(q_1) \\ \sign(q_2)\end{pmatrix},
\end{equation}
where $\sign$ denotes the set-valued sign of convex analysis, i.e., $\sign(0)=[-1,1]$.
Therefore we obtain
\begin{equation}
    \partial g^*(q) = \left.\begin{cases}
            \partial g_1^*(q) & \text{if }\abs{q}_{1} < 3\alpha\\
            \partial g_2^*(q) & \text{if }\abs{q}_{1} > 3\alpha\\
            \co\{\partial g_1^*(q),\partial g_2^*(q) \} & \text{if }\abs{q}_{1} = 3\alpha
    \end{cases}\right\}
= \begin{pmatrix} \sign(q_1) \\ \sign(q_2)\end{pmatrix}\cdot
    \begin{cases}
        1 & \text{if }\abs{q}_{1} < 3\alpha,\\
        2 & \text{if }\abs{q}_{1} > 3\alpha,\\
        [1,2] & \text{if }\abs{q}_{1} = 3\alpha.
    \end{cases}
\end{equation}
For an economic notation, let us introduce for $i,j,k\in\{-1,0,1\}$ the sets
\begin{equation}
I_k=k(0,\infty)=\begin{cases}(-\infty,0)&\text{if }k=-1\\\{0\}&\text{if }k=0\\(0,\infty)&\text{if }k=1\\\end{cases}
    \quad\text{ and }
    Q_{ijk}=\{q\in\R^2: q_1\in I_i,\,q_2\in I_j,\,\abs{q}_1-3\alpha\in I_k\}\,.
\end{equation}
A visualization is given in \cref{fig:subdomainselast:subdiff}.
Note that the index $0$ always indicates a lower-dimensional structure, in particular we have
\begin{equation}
    Q_{0jk}\subset\overline{Q_{-1,j,k}}\cap\overline{Q_{1,j,k}}\,,\quad
    Q_{i0k}\subset\overline{Q_{i,-1,k}}\cap\overline{Q_{i,1,k}}\,,\quad
    Q_{ij0}\subset\overline{Q_{i,j,-1}}\cap\overline{Q_{i,j,1}}\,.
\end{equation}
Using this notation, we can write the subdifferential as
\begin{equation}\label{eq:conj_subdiff_concentric}
    \partial g^*(q)=\begin{cases}
        \{\u_{ij}^{(k+3)/2}\}&\text{if }q\in Q_{ijk},\ i,j,k\in\{-1,1\},\\
        \co\big\{\u_{rs}^{(t+3)/2}: r,s,t\in\{-1,1\},\,|r-i|,|s-j|,|t-k|\leq1\big\}&\text{if }q\in Q_{ijk},\ 0\in\{i,j,k\},
    \end{cases}
\end{equation}
which provides more insight into its structure.
In particular, on each lower-dimensional $Q_{ijk}$ the subdifferential is the convex hull of the subdifferentials on the adjacent two-dimensional sets.

\paragraph{Proximal mapping}

To obtain the Moreau--Yosida regularization of $\partial g^*$ for $\gamma>0$, we proceed as above by first noting that $w=(\Id + \gamma\partial g^*)^{-1}(q) \in Q_{ijk}$ holds if and only if
\begin{equation}
    q \in (\Id+\gamma\partial g^*)(Q_{ijk})=: Q_{ijk}^{\gamma}.
\end{equation}
A visualization of these sets is provided in \cref{fig:subdomainselast:my}; we postpone the discussion of these sets to the end of the section and first calculate the specific value of the proximal mapping based on \eqref{eq:proxMapRelation} together with the case distinction in the subdifferential.

Let $w\in Q_{ijk}$ and correspondingly $q\in Q_{ijk}^\gamma$ for some $i,j,k\in\{-1,0,1\}$.
\begin{enumerate}[label=\roman*)]
    \item
        If $i,j,k\in\{-1,1\}$ we have $(\Id+\gamma\partial g^*)(w)=w+\gamma \u_{ij}^{(k+3)/2}$ so that
        \begin{equation}
            (\Id+\gamma\partial g^*)^{-1}(q)=q-\gamma \u_{ij}^{(k+3)/2}\qquad\text{for }q\in Q_{ijk}^\gamma\text{ with }i,j,k\in\{-1,1\}.
        \end{equation}
    \item
        If two of $i,j,k$ are zero, $(\Id+\gamma\partial g^*)^{-1}(q)$ must be the single unique element of $Q_{ijk}$, thus
        \begin{equation}
            (\Id+\gamma\partial g^*)^{-1}(q)=\begin{cases}
                0&\text{if }q\in Q_{0,0,-1}^\gamma,\\
                3\alpha(i,j)&\text{if }q\in Q_{i,j,0}^\gamma\text{ with }i=0\text{ or }j=0.
            \end{cases}
        \end{equation}
    \item
        If $i=0$ and $j,k\neq0$, then for $w\in Q_{0jk}$ we have 
        \begin{equation}
            (\Id+\gamma\partial g^*)(w)=w+\gamma\co\left\{\u_{-1,j}^{(k+3)/2},\u_{1,j}^{(k+3)/2}\right\}=w+\gamma\frac{k+3}2([-1,1],j).
        \end{equation}
        Thus for $q\in Q_{0jk}^\gamma$ we have $(\Id+\gamma\partial g^*)^{-1}(q)=q-\gamma\frac{k+3}2(\lambda,j)$, where $\lambda\in[-1,1]$ is such that $q-\gamma\frac{k+3}2(\lambda,j)\in Q_{0jk}\subset\{0\}\times\R$.
        Therefore $\lambda=\frac2{\gamma(k+3)}q_1$, and
        \begin{equation}
            (\Id+\gamma\partial g^*)^{-1}(q)=(0,q_2-\gamma\tfrac{k+3}2j)\qquad\text{for }q\in Q_{0jk}^\gamma\text{ with }j,k\in\{-1,1\}.
        \end{equation}
        Analogously,
        \begin{equation}
            (\Id+\gamma\partial g^*)^{-1}(q)=(q_1-\gamma\tfrac{k+3}2i,0)\qquad\text{for }q\in Q_{i0k}^\gamma\text{ with }i,k\in\{-1,1\}.
        \end{equation}
    \item
        If $k=0$ and $i,j\neq0$, then for $w\in Q_{ij0}$ we have
        \begin{equation}
            (\Id+\gamma\partial g^*)(w)=w+\gamma\co\{\u_{ij}^1,\u_{ij}^2\}=w+\gamma[1,2](i,j).
        \end{equation}
        Thus for $q\in Q_{ij0}^\gamma$ we have $(\Id+\gamma\partial g^*)^{-1}(q)=q-\gamma\lambda(i,j)$, where $\lambda\in[1,2]$ is such that $q-\gamma\lambda(i,j)\in Q_{ij0}\subset\{w\in\R^2:\abs{w}_1=3\alpha\}$.
        Therefore $\lambda=\frac{\abs{q}_1-3\alpha}{2\gamma}$, and
        \begin{equation}
            (\Id+\gamma\partial g^*)^{-1}(q)=q-\tfrac{\abs{q}_1-3\alpha}{2}(i,j)\qquad\text{for }q\in Q_{ij0}^\gamma\text{ with }i,j\in\{-1,1\}.
        \end{equation}
\end{enumerate}

It remains to discuss the sets $Q_{ijk}^\gamma$. 
Rather than list all sets explicitly, we instead provide a procedure for determining for a given $q\in\R^2$ the corresponding subdomain, which is what is actually required for the numerical implementation.
For that purpose, let us introduce the function (compare the illustration in \cref{fig:subdiff:elast:dimensions})
\begin{equation}
\eta(x)=\begin{cases}\gamma&\text{if }x<3\alpha+\gamma,\\x-3\alpha&\text{if }3\alpha+\gamma\leq x\leq3\alpha+2\gamma,\\2\gamma&\text{if }x>3\alpha+2\gamma.\end{cases}
\end{equation}
With this function we have $q\in Q_{ijk}^\gamma$ for $i,j,k$ given by
\begin{align}
i&=\begin{cases}0&\text{if }|q_1|\leq\eta(|q_2|),\\\sign(q_1)&\text{else,}\end{cases}\\[1em]
j&=\begin{cases}0&\text{if }|q_2|\leq\eta(|q_1|),\\\sign(q_2)&\text{else,}\end{cases}\\[1em]
    k&=\begin{cases}
        -1&\text{if }\abs{q}_\infty<3\alpha+\gamma\text{ and }\abs{q}_1<3\alpha+2\gamma,\\
        1&\text{if }\abs{q}_\infty>3\alpha+2\gamma\text{ or }\abs{q}_1>3\alpha+4\gamma,\\
    0&\text{else.}\end{cases}
\end{align}

\paragraph{Moreau--Yosida regularization}

Inserting this into the definition \eqref{eq:my_reg} of the Moreau--Yosida regularization yields
\begin{equation}\label{eq:myreg_elast}
    (\partial g^*)_\gamma(q)=\begin{cases}
        \u_{ij}^{(k+3)/2} & \text{if }q\in Q_{ijk}^\gamma\text{ with }i,j,k\neq 0,\\
        \frac1\gamma(q-3\alpha(i,j)) & \text{if }q\in Q_{ijk}^\gamma\text{ with } |i|+|j|+|k| = 1,\\
        (\frac1\gamma q_1,\frac{k+3}2j) & \text{if }q\in Q_{0jk}^\gamma\text{ with }j,k\neq 0,\\
        (\frac{k+3}2i,\frac1\gamma q_2) & \text{if }q\in Q_{i0k}^\gamma\text{ with }i,k\neq 0,\\
        \frac{\abs{q}_1-3\alpha}{2\gamma}(i,j) & \text{if }q\in Q_{ij0}^\gamma\text{ with }i,j\neq 0.
    \end{cases}
\end{equation}

\paragraph{Newton derivative}

Finally, we can again take as a Newton derivative any element of the Clarke gradient; here, we choose
\begin{equation}\label{eq:newton_elast}
    D_Nh_{\gamma}(q) = \begin{cases}
        0  & \text{ if } q\in Q_{ijk}^\gamma\text{ with }i,j,k\neq 0, \\
        \frac{1}{\gamma} \Id  & \text{ if } q\in Q_{ijk}^\gamma\text{ with } |i|+|j|+|k| = 1,\\

        \frac{1}{\gamma}(j,i)^T(j,i) 
        & \text{ if }q\in Q_{ijk}^\gamma\text{ with } |i| + |j| = 1,k\neq 0,\\

        \frac{1}{2\gamma}(i,j)^T(i,j) & \text{ if } q\in Q_{ij0}^\gamma\text{ with }i,j\neq 0. \\
    \end{cases}
\end{equation}

\section{State equation}\label{sec:stateEq}

In this section, we specify in more detail our model state operators and verify that the assumptions \ref{enm:weakWeakContinuity}--\ref{enm:adjointConvergence} of \cref{sec:existence,sec:stability} are satisfied for our model problems.

\subsection{Bloch equation}\label{sec:BlochStateEq}

As our motivating model problem, we consider the Bloch equation in a rotating reference frame without relaxation
\begin{equation}
    \frac{\dd}{\dd t}{\mathbf{M}}^{(\omega)}(t) = \mathbf{M}^{(\omega)}(t) \times \mathbf B^{(\omega)}(t)\,,
    \qquad\mathbf{M}^{(\omega)}(0)=(0,0,1)^T,
\end{equation}
which describes the temporally evolving magnetization $\mathbf M^{(\omega)}\in\R^3$ of an ensemble of spins rotating at the same resonance offset frequency $\omega$ (called \emph{isochromat}),
starting from a given equilibrium magnetization.
The time-varying effective magnetic field $\mathbf B^{(\omega)}(t)$ is of the form
\begin{equation}
    \mathbf B^{(\omega)}(t)=(\omega_x(t),\omega_y(t),\omega)^T\,,
\end{equation}
where $u(t):=(\omega_x(t),\omega_y(t))\in \R^2$ can be controlled.
The aim is to achieve a magnetization $\mathbf M^{(\omega)}(T)=\mathbf M_d$ within the time interval $\Omega=[0,T]$ for a list of offset frequencies $\omega_1,\ldots,\omega_J$.
In terms of our previous notation we thus set
\begin{equation}\label{eq:settingBloch}
    S:L^2(\Omega;\R^2) \to (\R^3)^J\qquad u\mapsto\left[\mathbf M^{(\omega_1)}(T),\ldots,\mathbf M^{(\omega_J)}(T)\right]\,.
\end{equation}
This choice of $S$ satisfies the assumptions \ref{enm:weakWeakContinuity}--\ref{enm:adjointConvergence}; see \cref{sec:BlochProperties}.

\subsection{Linear elasticity}\label{sec:elasticityStateEq}

In this case, $\Omega\subset\R^2$ represents an elastic body fixed at $\Gamma\subset\partial\Omega$ (with positive Hausdorff measure $\mathcal{H}^1(\Gamma)>0$),
where we assume $\Gamma$ and $\partial\Omega\setminus\Gamma$ to be smooth or $\Omega$ to be a convex polygon with $\Gamma$ being the union of some faces.
The elastic body is subject to a controlled body force $u:\Omega\to\R^2$.
The resulting displacement $y:\Omega\to\R^2$ is governed by the equations of linearized elasticity with Lamé parameters $\mu$ and $\lambda$,
\begin{equation}\label{eq:lame}
    \left\{\begin{aligned}
            -2\mu \divergence \epsilon(y) - \lambda \grad \divergence y &= u \text{  in } \Omega, \\ 
            y &= 0 \text{  on }  \Gamma, \\
            (2\mu \epsilon(y) + \lambda \divergence y) n &= 0 \text{  on }  \partial\Omega\setminus\Gamma, 
    \end{aligned}\right.
\end{equation}
where $n$ denotes the unit outward normal, $Dy=[\nabla y_1|\nabla y_2]^T$ is the displacement gradient, and $\epsilon(y)=\frac{Dy+Dy^T}2$ is the symmetrized gradient.
Defining
\begin{equation}
    H^1_\Gamma(\Omega) \defgl \left\{v \in H^1(\Omega;\R^2): v = 0 \text{ on } \Gamma \right\},
\end{equation}
we may take
\begin{equation}\label{eq:settingElasticity}
    S: H_{\Gamma}^1(\Omega)^* \to H_{\Gamma}^1(\Omega),\qquad u\mapsto y\text{ solving \eqref{eq:lame}}\,.
\end{equation}

The solution operator $S$ of the linear elasticity problem is well-known to be a bounded linear operator from $U=L^2(\Omega;\R^2)$ into $H^1_\Gamma(\Omega)\hookrightarrow L^2(\Omega;\R^2)=:Y$, see, e.g., \cite{Braess:2007}.
This immediately implies weak-to-weak continuity and Fréchet differentiability with $S'(u)=S$ for all $u\in U$.
Similarly, $S'(u)^*=S^*$ for all $u\in U$, and it is readily checked that actually $S$ is self-adjoint so that $S^*=S$.
As a consequence we have $\ran S'(u)^*=\ran S\hookrightarrow L^\infty(\Omega;\R^2)$.
Indeed, in case of polygonal domains $\Omega$ this follows from $\ran S\subset H^{3/2}(\Omega;\R^2)$ by \cite[Thm.~2.3]{Ni92},
and in the case of piecewise smooth domains with smooth traction boundary it follows from $\ran S\subset H^{2}(\Omega;\R^2)$ by \cite[Thm.~8]{MaNi10}.
Summarizing, this choice of $S$ satisfies assumptions \ref{enm:weakWeakContinuity}--\ref{enm:linearity}.

\section{Numerical solution}\label{sec:semiSmoothNewton}

We now discuss the numerical solution of the regularized system \eqref{eq:regsystem} via a semismooth Newton method.

\subsection{Bloch equation}

As is usual for time-dependent state equations, we avoid a full space-time discretization by following a reduced approach, i.e., we consider in place of \eqref{eq:regsystem} the equation
\begin{equation}\label{eq:my_sys_bloch}
    u_\gamma - H_\gamma(-\calF'(u_\gamma)) = 0.
\end{equation}
Recall that $H_\gamma$ is a superposition operator defined via 
\begin{equation}
    [H_\gamma(p)](x) = h_\gamma (p(x)) \qquad \text{for a.e. }x\in \Omega,
\end{equation}
with $h_\gamma = (\partial g^*)_\gamma$ given by \eqref{eq:my_bloch}.
By \cref{prop:blochAdjointContinuity}, we have $-\calF'(u_\gamma)=S'(u_\gamma)^*(z-S(u_\gamma))\in L^\infty(\Omega;\R^2)$, and hence we can consider $H_\gamma:L^r(\Omega;\R^2)\to L^2(\Omega;\R^2)$ for any $r>2$. Since $h_\gamma$ is Lipschitz continuous and piecewise differentiable, semismoothness of $H_\gamma$ follows from \cite[Thm.~3.49]{Ulbrich:2011} with Newton derivative given by
\begin{equation}
    [D_N H_\gamma(p) h](x) = D_Nh_\gamma(p(x))h(x) \qquad \text{for a.e. }x\in \Omega
\end{equation}
and $D_Nh_\gamma$ defined in \eqref{eq:newton_bloch}.

Further, note that $S$ is twice continuously differentiable.
Indeed, this follows by an analogous argument as for Fréchet differentiability in the proof of \cref{thm:operatorProperties}:
Using the same notation, the second derivative applied to test directions $\varphi,\psi\in L^2(\Omega;\R^2)$ will be given by $S''(u)(\varphi,\psi)=\mathbf{W}(T)=(\mathbf{W}^1(T),\ldots,\mathbf{W}^J(T))$ with
\begin{equation}
    \left\{\begin{aligned}
            \tfrac{\dd}{\dd t} \mathbf{W}^j(t)&=B_{u}^{\omega_j}(t)\mathbf{W}^j(t)+B_\varphi^0(t)\delta\mathbf M_\psi^{(\omega_j)}(t)+B_\psi^0(t)\delta\mathbf M_\varphi^{(\omega_j)}(t)\,,\qquad t\in[0,T],\\
            \mathbf{W}^j(0)&=0\,,
    \end{aligned}\right.
\end{equation}
where $S'(u)(\varphi)=(\delta\mathbf M_\varphi^{(\omega_1)}(T),\ldots,\delta\mathbf M_\varphi^{(\omega_J)}(T))$ with $\delta\mathbf M_\varphi^{(\omega)}$ satisfying \eqref{eq:BlochFrechetEquation}.
This equation has exactly the same structure as \eqref{eq:BlochFrechetEquation}, and thus the argument for showing 
\begin{equation}
    |S'(\tilde u)(\varphi)-S'(u)(\varphi)-S''(u)(\tilde u-u,\varphi)|_2=\|\varphi\|_{L^2(\Omega;\R^2)}O(\|\tilde u-u\|_{L^2(\Omega;\R^2)}^2)
\end{equation}
works analogously.
Since $S$ is twice continuously differentiable, we can apply the chain rule, e.g., from \cite[Thm.~3.69]{Ulbrich:2011} to obtain
\begin{equation}
    D_N (H_\gamma \circ (-\calF'))(u)\phi = - D_N H_\gamma(-\calF'(u))\calF''(u)\phi
\end{equation}
for any $\phi\in L^2(\Omega;\R^2)$. 
A semismooth Newton step is thus given by $u^{k+1} = u^k +\delta u$, where $\delta u$ is the solution to
\begin{equation}\label{eq:ssn_bloch}
    \left(\Id +  D_N H_\gamma(-\calF'(u^k))\calF''(u^k)\right)\delta u = -u^k+ H_\gamma(-\calF'(u^k))\,,
\end{equation}
which can be obtained, e.g., using a matrix-free Krylov subspace method such as GMRES.

Recall that following \cref{prop:bloch_adjoint} and \cite{Aigner:2015}, $p=-\calF'(u)$ can be evaluated by solving the adjoint equations
\begin{equation}
    \label{eq:adjoint}
    \left\{\begin{aligned}
            -\tfrac{\dd}{dt}{\mathbf{P}}^{(\omega_j)}(t)&= B^\omega_u(t) \mathbf{P}^{(\omega_j)}(t),\qquad t\in[0,T],\\
            \mathbf{P}^{(\omega_j)}(T)&=\mathbf{M}^{(\omega_j)}_u(T)-(\mathbf{M}_d)_j,
    \end{aligned}\right.
\end{equation}
for $j=1,\dots,J$ and setting
\begin{equation}
    \label{eq:gradient}
    p(t) = \sum_{j=1}^J\begin{pmatrix} 
        \big(\mathbf M_u^{(\omega_j)}(t)\big)_3{\mathbf{P}^{(\omega_j)}_2}(t) - \big(\mathbf M_u^{(\omega_j)}(t)\big)_2{\mathbf{P}^{(\omega_j)}_3}(t)\\
        \big(\mathbf M_u^{(\omega_j)}(t)\big)_3{\mathbf{P}^{(\omega_j)}_1}(t) - \big(\mathbf M_u^{(\omega_j)}(t)\big)_1{\mathbf{P}^{(\omega_j)}_3}(t)
    \end{pmatrix}
    =\sum_{j=1}^J
    \begin{pmatrix}
        \mathbf{M}^{(\omega_j)}_u(t)^T{B_1} {\mathbf{P}^{(\omega_j)}}(t)\\
        \mathbf{M}^{(\omega_j)}_u(t)^T{B_2} {\mathbf{P}^{(\omega_j)}}(t)
    \end{pmatrix}
\end{equation}
for $t\in[0,T]$, where for the sake of brevity, we have set
\begin{equation}
{B_1} :=\begin{pmatrix} 0 & 0 & 0 \\ 0 & 0 & -1 \\ 0 & 1 &0\end{pmatrix},\qquad
{B_2} :=\begin{pmatrix} 0 & 0 & -1 \\ 0 & 0 & 0 \\ 1 & 0 &0\end{pmatrix}.
\end{equation}
Similarly, the application of $\calF''(u)\phi$ for given $u,\phi\in L^2(\Omega;\R^2)$ is given by 
\begin{equation}
    \calF''(u)\phi=\sum_{j=1}^J\begin{pmatrix}
        \delta \mathbf{M}_\phi^{(\omega_j)}(t)^T{B_1} {\mathbf{P}^{(\omega_j)}}(t) +  \mathbf{M}_u^{(\omega_j)}(t)^T{B_1}\delta {\mathbf{P}^{(\omega_j)}}(t)\\
        \delta \mathbf{M}_\phi^{(\omega_j)}(t)^T{B_2} {\mathbf{P}^{(\omega_j)}}(t) +  \mathbf{M}_u^{(\omega_j)}(t)^T{B_2}\delta {\mathbf{P}^{(\omega_j)}}(t)
    \end{pmatrix},
\end{equation}
where $\delta \mathbf{M}_\phi^{(\omega)}$ (the directional derivative of $\mathbf{M}^{(\omega)}$ with respect to ${u}$) is given by the solution of the linearized state equation \eqref{eq:BlochFrechetEquation}
and $\delta {\mathbf{P}^{(\omega)}}$ (the directional derivative of ${\mathbf{P}^{(\omega)}}$ with respect to ${u}$) is given by the solution of the linearized adjoint equation
\begin{equation}
    \label{eq:adjoint_lin}
    \left\{\begin{aligned}
            -\tfrac{\dd}{\dd t}{\delta {\mathbf{P}^{(\omega)}}}(t) &= {B_u^\omega}(t) {\delta {\mathbf{P}^{(\omega)}}}(t)+ B_\phi^0(t) {\mathbf{P}^{(\omega)}}(t),\qquad t\in[0,T],\\
            \delta {\mathbf{P}^{(\omega)}}(T) &= \delta \mathbf{M}_\phi^{(\omega)}(T).
    \end{aligned}\right.
\end{equation}
This characterization can be derived using formal Lagrangian calculus and rigorously justified using the implicit function theorem; see, e.g., \cite[Chapter 1.6]{Hinze2009}.

\bigskip

Since the forward operator $S$ is nonlinear, the problem \eqref{eq:problem_reg} is nonconvex. Hence, convergence of the semismooth Newton method \eqref{eq:ssn_bloch} to a minimizer $u_\gamma$ requires a second-order sufficient (local quadratic growth) condition at $u_\gamma$ for $\gamma>0$ small, which is difficult to verify. Furthermore, we need to deal with the fact that Newton methods converge only locally, with the convergence region shrinking with $\gamma$. For this reason, we perform a continuation in $\gamma$, i.e., we solve \eqref{eq:regsystem} for a sequence $\gamma_1>\gamma_2>\ldots$ of regularization parameters, each time using the result for $\gamma_n$ as initialization for the iteration with $\gamma_{n+1}$. In addition, we include in each step of the semismooth Newton method a line search for $\delta u$ based on the residual norm of the reduced optimality condition \eqref{eq:my_sys_bloch}.
While globalization of nonsmooth Newton methods is a delicate issue that we do not want to address in this work, we remark that this heuristic approach seems to work well in practice.

\bigskip

We finally address the discretization of \eqref{eq:ssn_bloch}. The Bloch equation is discretized using a Crank--Nicolson method, where the states $\mathbf{M}^{(\omega)}$ are discretized as continuous piecewise linear functions with values $\mathbf{M}_m^{(\omega)}:=\mathbf{M}^{(\omega)}(t_m)$ for discrete time points $t_1,\ldots,t_{N_u}$, and the control ${u}$ is treated as a piecewise constant function, i.e., ${u}=\sum_{m=1}^{N_u} {u_m} \chi_{(t_{m-1},t_m]}(t)$, where $\chi_{(a,b]}$ is the characteristic function of the half-open interval~$(a,b]$. To obtain a consistent scheme, where discretization and optimization commute, the adjoint state $\mathbf{P}^{(\omega)}$ in \eqref{eq:adjoint} is discretized as piecewise constant using an appropriate time-stepping scheme \cite{BecMeiVex_07}, and the linearized state $\delta\mathbf{M}^{(\omega)}$ and the linearized adjoint state $\delta\mathbf{P}^{(\omega)}$ are discretized in the same way as the state and adjoint state, respectively; see~\cite{Aigner:2015}.

\subsection{Linearized elasticity}

For the case of linearized elasticity, we can proceed exactly as in  \cite{CIK:2014,CK:2013}. First, note that due to the embedding $H_{\Gamma}^1(\Omega) \hookrightarrow L^p(\Omega;\R^2)$ for $p>2$, the superposition operator $H_{\gamma}$ (for $h_\gamma:=(\partial g^*)_\gamma$ now given by \eqref{eq:myreg_elast}) is again semismooth with Newton derivative $D_NH_\gamma$ (for $D_N h_\gamma$ now given by \eqref{eq:newton_elast}).

To obtain a symmetric Newton system, we reduce \eqref{eq:regsystem} to the state $y_\gamma=S(u_\gamma)$ and the dual variable $p_\gamma$. Since $S$ is a bounded linear operator, we have $S'(u) = S$ and therefore by definition of $S$ obtain
\begin{equation}
    \label{eq:redregsystem}
    \left\{\begin{aligned}
            A^*p_{\gamma} &= z - y_{\gamma}, \\
            Ay_{\gamma} &= H_{\gamma}(p_{\gamma}),
    \end{aligned} \right.
\end{equation}
where $A$ denotes the elliptic linear differential operator arising from the system \eqref{eq:lame} of linearized elasticity.
Consequently, we consider
\begin{equation}
F(y,p) \defgl \begin{pmatrix} y - z +  A^*p \\ Ay - H_{\gamma}(p) \end{pmatrix} = \begin{pmatrix} 0 \\ 0 \end{pmatrix},
    \label{eq:Newtonfun}
\end{equation}
where $F:Y\times U^*\to Y\times U$. Since the regularized optimal state $y_\gamma$ and the adjoint state $p_\gamma$ are in $H^1_\Gamma(\Omega)$, we may consider $F:H^1_\Gamma(\Omega)\times H^1_\Gamma(\Omega)\to H^1_\Gamma(\Omega)^*\times H^1_\Gamma(\Omega)^*$.
For a semismooth Newton step, we obtain $(\delta y, \delta p)$ by solving
\begin{equation}
    \begin{pmatrix}
        \Id &  A^* \\
        A & -D_NH_{\gamma}(p^k)
    \end{pmatrix}
    \begin{pmatrix}
        \delta y \\
        \delta p
    \end{pmatrix} 
    = 
    \begin{pmatrix}
        z - y^k -A^*p^k \\
        -Ay^k + H_{\gamma}(p^k)
    \end{pmatrix}
    \label{eq:Newtonstep}
\end{equation}   
for given $(y^k,p^k)$, and we set $y^{k+1} = y^k + \delta y$ and $p^{k+1} = p^k + \delta p$.

\bigskip

Due to the linearity of the state equation (and hence convexity of the problem), the convergence of the semismooth Newton method for every $\gamma>0$ to a minimizer of \eqref{eq:problem_reg} can be shown exactly as in \cite{CIK:2014,CK:2013}. As in the case of the Bloch equation, we  include a continuation in $\gamma$ as well as a line search based on the residual norm in \eqref{eq:Newtonfun}.

\bigskip

For the discretization, we consider \eqref{eq:Newtonfun} in its weak form
\begin{equation}\label{eq:elasticReducedWeakSystem}
    \begin{pmatrix}
        \int_\Omega2\mu\epsilon(p):\epsilon(\phi)+\lambda\divergence(p)\,\divergence\phi+(y-z)\phi\,\dd x\\
        \int_\Omega2\mu\epsilon(y):\epsilon(\psi)+\lambda\divergence y\,\divergence\psi-h_\gamma(p)\psi\,\dd x
    \end{pmatrix} = 
    \begin{pmatrix} 
        0 \\ 0 
    \end{pmatrix}
    \qquad\text{for all }\phi,\psi\in H^1_\Gamma(\Omega).
\end{equation}
We now discretize the state $y$, the adjoint state $p$, and the test functions $\phi_h,\psi_h$ using piecewise linear finite element functions $y_h,p_h,\phi_h,\psi_h\in V_h$,
where $V_h\subset H^1_\Gamma(\Omega)$ denotes the space of piecewise linear, $\R^2$-valued functions on a uniform triangulation of $\Omega$.
Analogously to \cite{CIK:2014,CK:2013}, we employ exact quadrature for all terms
except for $\int_\Omega h_\gamma(p_h)\psi_h\,\dd x$, which we approximate by $\int_\Omega I_h(h_\gamma(p_h))\psi_h\,\dd x$ for the piecewise linear nodal interpolation operator $I_h$.
Thus, letting $\phi_1,\ldots,\phi_{N_h}$ denote a nodal basis of $V_h$ and introducing the mass and stiffness matrices
\begin{equation}
    M_h=\left(\int_\Omega\phi_i\cdot \phi_j\,\dd x\right)_{ij},\quad
    L_h=\left(\int_\Omega\epsilon(\phi_i):\epsilon(\phi_j)\,\dd x\right)_{ij},\quad
    K_h=\left(\int_\Omega\divergence\phi_i\cdot \divergence\phi_j\,\dd x\right)_{ij},
\end{equation}
as well as $A_h=2\mu L_h+\lambda K_h$ and the vector $Z_h=\left(\int_\Omega z\cdot\phi_1\,\dd x,\ldots,\int_\Omega z\cdot\phi_{N_h}\,\dd x\right)^T$, the discrete version of \eqref{eq:elasticReducedWeakSystem} reads
\begin{equation}
    \begin{pmatrix}
        A_h^T\mathbf p+M_h\mathbf y-Z_h\\
        A_h\mathbf y-M_hh_\gamma(\mathbf p)
    \end{pmatrix} 
    = 
    \begin{pmatrix} 
        0 \\ 0 
    \end{pmatrix},
\end{equation}
and \eqref{eq:Newtonstep} becomes
\begin{equation}
    \begin{pmatrix}
        M_h &  A_h^T \\
        A_h & -M_hD_Nh_{\gamma}(\mathbf p^k)
    \end{pmatrix}
    \begin{pmatrix}
        \delta\mathbf y \\
        \delta\mathbf p
    \end{pmatrix} 
    = 
    \begin{pmatrix}
        Z_h -\mathbf y^k -A_h^T\mathbf p^k \\
        -A_h\mathbf y^k + M_hh_{\gamma}(\mathbf p^k)
    \end{pmatrix}
\end{equation}
where $\mathbf y=(y_i)_i$ and $\mathbf p=(p_i)_i$ are the nodal values of $y_h$ and $p_h$, and where $h_\gamma(\mathbf p)=(h_\gamma(p_i))_i$ and $D_Nh_\gamma(\mathbf p)=(D_Nh_\gamma(p_i)\delta_{ij})_{ij}$.

\section{Numerical examples}\label{sec:examples}

We illustrate the proposed approach for the two model problems described in \cref{sec:stateEq} and the two specific multibang penalties described in \cref{sec:penalty}.
The Matlab code used to generate these examples can be downloaded from \url{http://github.com/clason/vectormultibang}. 

\subsection{Bloch equation}

The first example is based on the optimal excitation of isochromats in nuclear magnetic resonance imaging \cite{Spincontrol:15}, where the aim is to shift the magnetization vector $\mathbf M$ at time $T$ from initial alignment with a strong external magnetic field, i.e., $\mathbf M(0) = (0,0,1)^T$, to the saturated state $\mathbf M_d=(1,0,0)^T$ using a radiofrequency pulse $u(t)=(\omega_x(t),\omega_y(t))^T$. To follow the physical setup, we scale the controls as $u(t) = \bar \gamma B_1 \tilde u(t)$, where $\bar \gamma \approx 267.51$ is the gyromagnetic ratio (in MHz per Tesla) and $B_1 = 10^{-2}$ is the strength of the modulated magnetic field (in milli-Tesla); the figures always show the unscaled control $\tilde u$. The control cost parameter (which in this setting can be interpreted as a penalty on the specific absorption rate of the radio energy) is set to $\alpha=10^{-1}$.
In all examples, the Bloch equation is discretized with $N_u=1000$ time intervals; the implementation of the discrete (linearized) Bloch and adjoint equations is taken from \cite{rfcontrol}.
The semismooth Newton iteration is then applied and terminated if the relative or absolute norm of the residual in the optimality condition drops below $10^{-7}$ or if $500$ iterations are exceeded.  The Newton step is solved via GMRES without restarts and without preconditioning, which is terminated if the relative residual drops below $10^{-10}$ or if $1000$ iterations are exceeded. 
The continuation in the Moreau--Yosida regularization is started with $\gamma_0=10^2$ and reduced by a factor of $1/2$ until $\gamma_{\min}=10^{-10}$ is reached or the semismooth Newton iteration fails to convergence.
We remark that in a practical implementation, these strict fixed tolerances should be replaced as in inexact Newton methods by adaptive criteria based on residuals in the outer loops.

We begin with a single isochromat with $\omega = 10^{-2}\bar\gamma$. \Cref{fig:Bloch1} shows the resulting optimal control $\tilde u$ and magnetization evolution $\mathbf{M}^{(\omega)}(t)$ for $M=3$ equally spaced radially distributed desired control values with magnitude $\omega_0=1$ and phases
$\theta_1=-\pi$, $\theta_2=-\pi/3$, $\theta_3=\pi/3$, which are marked by colored dashed lines.
At any time $t\in[0,T]$, the optimal control $\tilde u(t)=(\omega_x(t),\omega_y(t))$ can be seen to only take values from $\calM$ as desired.
(For an easier visual comprehension, $\tilde u(t)$ is plotted as a continuous curve so that a jump from one value in $\calM$ to another is shown as a connecting line.)
Indeed, most of the time we have $\tilde u=\bar u_0=0$, periodically intermitted by short time intervals where $\tilde u$ takes the values $\bar u_1,\bar u_2,\bar u_3\in\calM$ in a periodically rotating order.
Each of these time intervals coincides in the state trajectory with a change in $M_z$, while the $M_z$ component of $\mathbf M^{(\omega)}$ stays constant during $\tilde u=0$.
The final magnetization $\mathbf M^{(\omega)}(T)$ shows a very close attainment of the target $\mathbf M_d$. The situation is very similar for $M=6$ with $\omega_0=1$ and $\theta \in \{-\pi,-2\pi/3,-\pi/3,0,\pi/3,2\pi/3\}$, see \cref{fig:Bloch2}. In both cases, all nonzero desired control values are made use of equally.

We now consider the simultaneous control of $J=4$ isochromats with $\omega = 10^{-2}\bar\gamma\cdot(1,2,3,4)$. \Cref{fig:Bloch3} shows the result if the same target $\mathbf M_d=(1,0,0)^T$ is specified for all isochromats. Again, we have very close attainment of the target,
and again the control is zero most of the time, intermitted by regularly spaced intervals in which nonzero control values from $\calM$ are used.
This time, not all nonzero values from $\calM$ occur, but just $\bar u_2$ and $\bar u_3$ (indicated by the red and turquois dashed line).
In addition there are five time points at which control values outside $\calM$ are adopted, visible in the graph as short spikes emanating from $\tilde u=0$.
(Note, though, that these values still show the desired angles, merely at smaller than desired magnitudes.) 
This may be due to the fact that in this example, the Newton method failed to converge already for $\gamma<2\cdot 10^{-6}$. 
In the more realistic case where only a single isochromat -- in this case $j=3$ -- is supposed to be excited (i.e., $\mathbf M_d=(1,0,0)^T$ for $\mathbf M^{(\omega_3)}$ and $\mathbf M_d=(0,0,1)^T$ else), we again obtain a pure multibang control (see \cref{fig:Bloch4}).

\definecolor{mycolor1}{rgb}{0.00000,0.44700,0.74100}%
\definecolor{mycolor2}{rgb}{0.85000,0.32500,0.09800}%
\definecolor{mycolor3}{rgb}{0.92900,0.69400,0.12500}%
\definecolor{mycolor4}{rgb}{0.49400,0.18400,0.55600}%
\definecolor{mycolor5}{rgb}{0.46600,0.67400,0.18800}%
\definecolor{mycolor6}{rgb}{0.30100,0.74500,0.93300}%
\definecolor{mycolor7}{rgb}{0.63500,0.07800,0.18400}%

\begin{figure}[p]
    \centering
    \begin{subfigure}[t]{0.45\linewidth}
        \centering
        \begin{tikzpicture}[trim axis left]

\begin{axis}[%
width=\linewidth,
scale only axis,
xmin=0,
xmax=8,
tick align=outside,
xlabel={$t$},
ymin=-1,
ymax=1,
ylabel={$\omega_x$},
zmin=-1,
zmax=1,
zlabel={$\omega_y$},
view={-37.5}{30},
axis background/.style={fill=white},
axis x line*=bottom,
axis y line*=left,
axis z line*=left,
xmajorgrids,
ymajorgrids,
zmajorgrids
]
\addplot3 [color=mycolor1, line width=1pt]
 table[row sep=crcr] {%
0	-0	0\\
0.504	-0	0\\
0.511	-1	-0\\
0.567	-1	-0\\
0.574	0	0\\
1.288	-0	0\\
1.295	0.5	-0.866025403784438\\
1.351	0.5	-0.866025403784438\\
1.358	0	-0\\
2.065	-0	0\\
2.072	0.389203148246079	0.67411962722797\\
2.079	0.5	0.866025403784438\\
2.135	0.5	0.866025403784438\\
2.142	-0	0\\
2.849	-0	-0\\
2.856	-1	-0\\
2.912	-1	-0\\
2.926	-0	0\\
3.633	-0	-0\\
3.64	0.5	-0.866025403784438\\
3.696	0.5	-0.866025403784438\\
3.703	-0	0\\
4.417	0	-0\\
4.424	0.5	0.866025403784438\\
4.48	0.5	0.866025403784438\\
4.487	-0	0\\
5.201	0	-0\\
5.208	-1	-0\\
5.264	-1	-0\\
5.271	0	0\\
5.985	-0	0\\
5.992	0.5	-0.866025403784438\\
6.048	0.5	-0.866025403784438\\
6.055	-0	0\\
6.762	-0	-0\\
6.769	0.439094307431502	0.760533649785632\\
6.776	0.5	0.866025403784438\\
6.832	0.500000000000001	0.866025403784441\\
6.839	-0	-0\\
6.993	0	0\\
};
 \addplot3 [color=mycolor2, dashed, line width=1.5pt]
 table[row sep=crcr] {%
0	-1	-0\\
6.993	-1	-0\\
};
 \addplot3 [color=mycolor3, dashed, line width=1.5pt]
 table[row sep=crcr] {%
0	0.5	-0.866025403784438\\
6.993	0.5	-0.866025403784438\\
};
 \addplot3 [color=mycolor4, dashed, line width=1.5pt]
 table[row sep=crcr] {%
0	0.5	0.866025403784438\\
6.993	0.5	0.866025403784438\\
};
 \end{axis}
\end{tikzpicture}%
        \caption{control $\tilde u(t)$}\label{fig:Bloch1:u}
    \end{subfigure}
    \hfill
    \begin{subfigure}[t]{0.45\linewidth}
        \centering
        \input{bloch_d3_state.tikz}
        \caption{state $\mathbf{M}_u^{(\omega)}(t)$}\label{fig:Bloch1:M}
    \end{subfigure}
    \caption{Control and state for the Bloch model problem: $M=3$}\label{fig:Bloch1}
\end{figure} 
\begin{figure}
    \centering
    \begin{subfigure}[t]{0.45\linewidth}
        \centering
        \begin{tikzpicture}[trim axis left]

\begin{axis}[%
width=\linewidth,
scale only axis,
xmin=0,
xmax=8,
tick align=outside,
xlabel={$t$},
ymin=-1,
ymax=1,
ylabel={$\omega_x$},
zmin=-1,
zmax=1,
zlabel={$\omega_y$},
view={-37.5}{30},
axis background/.style={fill=white},
axis x line*=bottom,
axis y line*=left,
axis z line*=left,
xmajorgrids,
ymajorgrids,
zmajorgrids
]
\addplot3 [color=mycolor1, line width=1pt]
 table[row sep=crcr] {%
0	-8.88178419700125e-16	0\\
0.126	0	0\\
0.133	-0.5	0.866025403784438\\
0.161	-0.5	0.866025403784436\\
0.168	-0	-0\\
0.518	-0	0\\
0.525	-1	-0\\
0.546	-1	-0\\
0.553	-0.968339124108316	-0\\
0.56	0	-0\\
0.91	-0	-0\\
0.917	-0.5	-0.866025403784438\\
0.938	-0.5	-0.866025403784438\\
0.952	-0	-0\\
1.302	-0	-0\\
1.309	0.5	-0.866025403784438\\
1.33	0.5	-0.866025403784438\\
1.337	-0	0\\
1.694	-0	0\\
1.701	1	0\\
1.722	1	0\\
1.729	0	0\\
2.086	0	0\\
2.093	0.499999999999999	0.866025403784439\\
2.114	0.499999999999999	0.866025403784439\\
2.121	0	0\\
2.478	-0	1.77635683940025e-15\\
2.485	-0.5	0.866025403784438\\
2.506	-0.5	0.866025403784438\\
2.513	0	-0\\
2.863	-0	-0\\
2.87	-0.731267125527537	-0\\
2.877	-1	-0\\
2.898	-1	-0\\
2.905	-0	0\\
3.255	-0	-0\\
3.262	-0.500000000000001	-0.86602540378443\\
3.29	-0.5	-0.866025403784438\\
3.297	0	-0\\
3.647	-0	0\\
3.654	0.499999999999999	-0.866025403784437\\
3.682	0.5	-0.866025403784438\\
3.689	-0	0\\
4.039	0	0\\
4.046	1.00000000000001	0\\
4.074	1	0\\
4.081	0	0\\
4.431	0	0\\
4.438	0.499999999999999	0.866025403784439\\
4.466	0.499999999999999	0.86602540378444\\
4.473	0	-0\\
4.823	0	-0\\
4.83	-0.5	0.866025403784438\\
4.858	-0.5	0.866025403784436\\
4.865	0	-0\\
5.215	-0	-0\\
5.222	-1	-0\\
5.25	-0.999999999999999	-0\\
5.257	-0	0\\
5.607	-0	0\\
5.614	-0.5	-0.866025403784438\\
5.642	-0.499999999999997	-0.866025403784436\\
5.649	0	0\\
5.999	0	-0\\
6.006	0.5	-0.866025403784438\\
6.027	0.5	-0.866025403784438\\
6.034	0.324766587893095	-0.562512230831794\\
6.041	0	0\\
6.391	-0	0\\
6.398	1	0\\
6.419	1	0\\
6.426	-0	0\\
6.783	-0	-0\\
6.79	0.499999999999999	0.866025403784439\\
6.811	0.499999999999999	0.866025403784439\\
6.818	-0	0\\
6.993	0	0\\
};
 \addplot3 [color=mycolor2, dashed, line width=1.5pt]
 table[row sep=crcr] {%
0	-1	-0\\
6.993	-1	-0\\
};
 \addplot3 [color=mycolor3, dashed, line width=1.5pt]
 table[row sep=crcr] {%
0	-0.5	-0.866025403784438\\
6.993	-0.5	-0.866025403784438\\
};
 \addplot3 [color=mycolor4, dashed, line width=1.5pt]
 table[row sep=crcr] {%
0	0.5	-0.866025403784438\\
6.993	0.5	-0.866025403784438\\
};
 \addplot3 [color=mycolor5, dashed, line width=1.5pt]
 table[row sep=crcr] {%
0	1	0\\
6.993	1	0\\
};
 \addplot3 [color=mycolor6, dashed, line width=1.5pt]
 table[row sep=crcr] {%
0	0.499999999999999	0.866025403784439\\
6.993	0.499999999999999	0.866025403784439\\
};
 \addplot3 [color=mycolor7, dashed, line width=1.5pt]
 table[row sep=crcr] {%
0	-0.5	0.866025403784438\\
6.993	-0.5	0.866025403784438\\
};
 \end{axis}
\end{tikzpicture}%
        \caption{control $\tilde u(t)$}\label{fig:Bloch2:u}
    \end{subfigure}
    \hfill
    \begin{subfigure}[t]{0.45\linewidth}
        \centering
        \input{bloch_d6_state.tikz}
        \caption{state $\mathbf{M}_u^{(\omega)}(t)$}\label{fig:Bloch2:M}
    \end{subfigure}
    \caption{Control and state for the Bloch model problem: $M=6$}\label{fig:Bloch2}
\end{figure} 

\begin{figure}
    \centering
    \begin{subfigure}[t]{0.45\linewidth}
        \centering
        \begin{tikzpicture}[trim axis left]

\begin{axis}[%
width=\linewidth,
scale only axis,
xmin=0,
xmax=8,
tick align=outside,
xlabel={$t$},
ymin=-1,
ymax=1,
ylabel={$\omega_x$},
zmin=-1,
zmax=1,
zlabel={$\omega_y$},
view={-37.5}{30},
axis background/.style={fill=white},
axis x line*=bottom,
axis y line*=left,
axis z line*=left,
xmajorgrids,
ymajorgrids,
zmajorgrids
]
\addplot3 [color=mycolor1, line width=1pt]
 table[row sep=crcr] {%
0	-0.5	0.866025403784438\\
0.0490000000000004	-0.5	0.866025403784438\\
0.056	-0.390135461161352	0.675734440565775\\
0.0629999999999997	-0	-0\\
0.651	-0	0\\
0.665	-0.237138774803105	-0\\
0.672	-0	-0\\
0.77	0	0\\
0.777	-0.10146386341886	-0.175740566573695\\
0.784	-0.164113039195898	-0.284252122071837\\
0.791	0	-0\\
1.428	-0	0\\
1.435	0.155726530747316	-0.269726263340789\\
1.442	0.124406663474223	-0.215478661937476\\
1.449	-0	0\\
1.547	-0	0\\
1.554	0.2517482849423	0\\
1.561	0.0485040870096149	0\\
1.568	0	0\\
2.17	0	0\\
2.177	0.00902189733162384	0.0156263845590425\\
2.184	0.499999999999999	0.866025403784439\\
2.289	0.499999999999999	0.866025403784439\\
2.296	-0	-0\\
2.303	-0	-0\\
2.31	-0.5	0.866025403784438\\
2.401	-0.5	0.866025403784438\\
2.408	-0.335528775544908	0.581152886645155\\
2.415	0	-0\\
3.129	0	0\\
3.136	-0.0084788176348729	-0.0146857429317109\\
3.143	0	0\\
4.522	0	-0\\
4.536	0.499999999999999	0.866025403784439\\
4.641	0.499999999999999	0.866025403784439\\
4.648	-0	-0\\
4.655	-0.5	0.866025403784438\\
4.753	-0.5	0.866025403784438\\
4.76	-0.349794162256198	0.605861261218727\\
4.767	0	-0\\
6.874	-0	0\\
6.881	0.358708208735582	0.62130084262205\\
6.888	0.499999999999999	0.866025403784439\\
6.986	0.499999999999999	0.866025403784439\\
6.993	-0	-0\\
};
 \addplot3 [color=mycolor2, dashed, line width=1.5pt]
 table[row sep=crcr] {%
0	-1	-0\\
6.993	-1	-0\\
};
 \addplot3 [color=mycolor3, dashed, line width=1.5pt]
 table[row sep=crcr] {%
0	-0.5	-0.866025403784438\\
6.993	-0.5	-0.866025403784438\\
};
 \addplot3 [color=mycolor4, dashed, line width=1.5pt]
 table[row sep=crcr] {%
0	0.5	-0.866025403784438\\
6.993	0.5	-0.866025403784438\\
};
 \addplot3 [color=mycolor5, dashed, line width=1.5pt]
 table[row sep=crcr] {%
0	1	0\\
6.993	1	0\\
};
 \addplot3 [color=mycolor6, dashed, line width=1.5pt]
 table[row sep=crcr] {%
0	0.499999999999999	0.866025403784439\\
6.993	0.499999999999999	0.866025403784439\\
};
 \addplot3 [color=mycolor7, dashed, line width=1.5pt]
 table[row sep=crcr] {%
0	-0.5	0.866025403784438\\
6.993	-0.5	0.866025403784438\\
};
 \end{axis}
\end{tikzpicture}%
        \caption{control $\tilde u(t)$}\label{fig:Bloch3:u}
    \end{subfigure}
    \hfill
    \begin{subfigure}[t]{0.45\linewidth}
        \centering
        \input{bloch_d6_i4_state.tikz}
        \caption{state $\mathbf{M}_u^{(\omega_j)}(t)$}\label{fig:Bloch3:M}
    \end{subfigure}
    \caption{Control and state for the Bloch model problem: $M=6$, $J=4$}\label{fig:Bloch3}
\end{figure} 
\begin{figure}
    \centering
    \begin{subfigure}[t]{0.45\linewidth}
        \centering
        \begin{tikzpicture}[trim axis left]

\begin{axis}[%
width=\linewidth,
scale only axis,
xmin=0,
xmax=8,
tick align=outside,
xlabel={$t$},
ymin=-1,
ymax=1,
ylabel={$\omega_x$},
zmin=-1,
zmax=1,
zlabel={$\omega_y$},
view={-37.5}{30},
axis background/.style={fill=white},
axis x line*=bottom,
axis y line*=left,
axis z line*=left,
xmajorgrids,
ymajorgrids,
zmajorgrids
]
\addplot3 [color=mycolor1, line width=1pt]
 table[row sep=crcr] {%
0	-0	0\\
0.00699999999999967	0	0\\
0.0140000000000002	-0.5	0.866025403784438\\
0.0209999999999999	0	0\\
0.133	0	0\\
0.14	-1	-0\\
0.147	-1	-0\\
0.154	-0	0\\
0.266	0	-0\\
0.273	-0.5	-0.866025403784438\\
0.28	-0.301760939368978	-0.522665278726782\\
0.287	-0	0\\
0.399	0	-0\\
0.406	0.5	-0.866025403784438\\
0.413	0	-0\\
0.525	-0	0\\
0.532	1	0\\
0.539	1	0\\
0.546	-0	0\\
0.658	-0	-0\\
0.665	0.499999999999999	0.866025403784439\\
0.679	0	0\\
0.784	-0	0\\
0.791	-0.124654847034217	0.215908528472991\\
0.798	-0.5	0.866025403784438\\
0.805	-0	-0\\
0.917	-0	-0\\
0.924	-1	-0\\
0.931	-1	-0\\
0.938	-0	0\\
1.05	-0	-0\\
1.057	-0.5	-0.866025403784438\\
1.064	-0.0110260488915124	-0.0190976768868376\\
1.071	-0	-0\\
1.176	0	0\\
1.183	0.194960546499996	-0.337681572009386\\
1.19	0.5	-0.866025403784438\\
1.197	-0	-0\\
1.309	0	0\\
1.316	1	0\\
1.323	1	0\\
1.33	-0	0\\
1.442	0	0\\
1.449	0.499999999999999	0.866025403784439\\
1.456	-0	-0\\
1.568	0	0\\
1.575	-0.353729988054552	0.612678311471217\\
1.582	-0.5	0.866025403784438\\
1.589	-0	-0\\
1.701	-0	-0\\
1.708	-1	-0\\
1.715	-0.8746925557469	-0\\
1.722	-0	0\\
1.834	0	0\\
1.841	-0.5	-0.866025403784438\\
1.848	-0	-0\\
1.96	-0	-0\\
1.967	0.5	-0.866025403784438\\
1.974	0.5	-0.866025403784438\\
1.981	0	-0\\
2.093	0	0\\
2.1	1	0\\
2.107	0.820583989450242	0\\
2.114	0	0\\
2.219	-0	-0\\
2.226	0.020826304043859	0.0360722167378409\\
2.233	0.499999999999999	0.866025403784439\\
2.24	-0	0\\
2.352	-0	-0\\
2.359	-0.5	0.866025403784438\\
2.366	-0.5	0.866025403784438\\
2.373	-0	0\\
2.485	-0	-0\\
2.492	-1	-0\\
2.506	-0	-0\\
2.611	-0	0\\
2.618	-0.107193807473524	-0.185665120800901\\
2.625	-0.5	-0.866025403784438\\
2.632	-0	-0\\
2.744	0	0\\
2.751	0.5	-0.866025403784438\\
2.758	0.5	-0.866025403784438\\
2.765	0	-0\\
2.877	0	0\\
2.884	1	0\\
2.891	0.199063844241419	0\\
2.898	-0	0\\
3.003	-0	-0\\
3.01	0.210361500514878	0.364356806848197\\
3.017	0.499999999999999	0.866025403784439\\
3.024	-0	0\\
3.136	0	0\\
3.143	-0.5	0.866025403784438\\
3.15	-0.5	0.866025403784438\\
3.157	-0	0\\
3.269	-0	-0\\
3.276	-1	-0\\
3.283	-0	-0\\
3.395	-0	-0\\
3.402	-0.322207954766399	-0.558080548258258\\
3.409	-0.5	-0.866025403784438\\
3.416	-0	0\\
3.528	-0	0\\
3.535	0.5	-0.866025403784438\\
3.542	0.5	-0.866025403784438\\
3.549	0	-0\\
3.661	0	0\\
3.668	1	0\\
3.675	-0	0\\
3.787	0	0\\
3.794	0.367232102962029	0.636064660500599\\
3.801	0.499999999999999	0.866025403784439\\
3.808	0	-0\\
3.92	0	0\\
3.927	-0.5	0.866025403784438\\
3.934	-0.396769649352417	0.687225191579675\\
3.941	-0	-0\\
4.053	0	0\\
4.06	-1	-0\\
4.067	-0	-0\\
4.179	-0	0\\
4.186	-0.5	-0.866025403784438\\
4.193	-0.5	-0.866025403784438\\
4.2	-0	0\\
4.312	-0	-0\\
4.319	0.5	-0.866025403784438\\
4.326	0.308492440046711	-0.534324579911799\\
4.333	-0	0\\
4.438	0	0\\
4.445	0.099686261469107	0\\
4.452	1	0\\
4.459	0	0\\
4.571	-0	0\\
4.578	0.499999999999999	0.866025403784439\\
4.585	0.499999999999999	0.866025403784439\\
4.592	0	0\\
4.704	-0	0\\
4.711	-0.5	0.866025403784438\\
4.725	0	0\\
4.83	0	0\\
4.844	-1	-0\\
4.851	-0	-0\\
4.963	0	-0\\
4.97	-0.5	-0.866025403784438\\
4.977	-0.5	-0.866025403784438\\
4.984	-0	-0\\
5.096	0	-0\\
5.103	0.5	-0.866025403784438\\
5.11	0	-0\\
5.222	0	0\\
5.236	1	0\\
5.243	-0	0\\
5.355	0	-0\\
5.362	0.499999999999999	0.866025403784439\\
5.369	0.499999999999999	0.866025403784439\\
5.376	0	-0\\
5.488	-0	0\\
5.495	-0.5	0.866025403784438\\
5.502	-0	0\\
5.614	-0	-0\\
5.621	-0.847035175587634	-0\\
5.628	-1	-0\\
5.635	0	0\\
5.747	-0	-0\\
5.754	-0.5	-0.866025403784438\\
5.761	-0.5	-0.866025403784438\\
5.768	-0	-0\\
5.88	-0	0\\
5.887	0.5	-0.866025403784438\\
5.894	-0	0\\
6.006	0	0\\
6.013	0.872181644123008	0\\
6.02	1	0\\
6.027	0	0\\
6.139	-0	0\\
6.146	0.499999999999999	0.866025403784439\\
6.153	0.396574145549109	0.686886569059273\\
6.16	-0	0\\
6.272	0	-0\\
6.279	-0.5	0.866025403784438\\
6.286	-0	0\\
6.398	0	0\\
6.405	-1	-0\\
6.412	-1	-0\\
6.419	-0	0\\
6.531	-0	-0\\
6.538	-0.5	-0.866025403784438\\
6.545	-0.17042063016699	-0.295177190107132\\
6.552	0	-0\\
6.657	0	0\\
6.664	0.0155491830246897	-0.0269319750149499\\
6.671	0.5	-0.866025403784438\\
6.678	0	-0\\
6.79	0	0\\
6.797	1	0\\
6.804	1	0\\
6.811	-0	0\\
6.923	0	0\\
6.93	0.499999999999999	0.866025403784439\\
6.937	0.0852671501867031	0.147687036339977\\
6.944	-0	0\\
6.993	0	0\\
};
 \addplot3 [color=mycolor2, dashed, line width=1.5pt]
 table[row sep=crcr] {%
0	-1	-0\\
6.993	-1	-0\\
};
 \addplot3 [color=mycolor3, dashed, line width=1.5pt]
 table[row sep=crcr] {%
0	-0.5	-0.866025403784438\\
6.993	-0.5	-0.866025403784438\\
};
 \addplot3 [color=mycolor4, dashed, line width=1.5pt]
 table[row sep=crcr] {%
0	0.5	-0.866025403784438\\
6.993	0.5	-0.866025403784438\\
};
 \addplot3 [color=mycolor5, dashed, line width=1.5pt]
 table[row sep=crcr] {%
0	1	0\\
6.993	1	0\\
};
 \addplot3 [color=mycolor6, dashed, line width=1.5pt]
 table[row sep=crcr] {%
0	0.499999999999999	0.866025403784439\\
6.993	0.499999999999999	0.866025403784439\\
};
 \addplot3 [color=mycolor7, dashed, line width=1.5pt]
 table[row sep=crcr] {%
0	-0.5	0.866025403784438\\
6.993	-0.5	0.866025403784438\\
};
 \end{axis}
\end{tikzpicture}%
        \caption{control $\tilde u(t)$}\label{fig:Bloch4:u}
    \end{subfigure}
    \hfill
    \begin{subfigure}[t]{0.45\linewidth}
        \centering
        \input{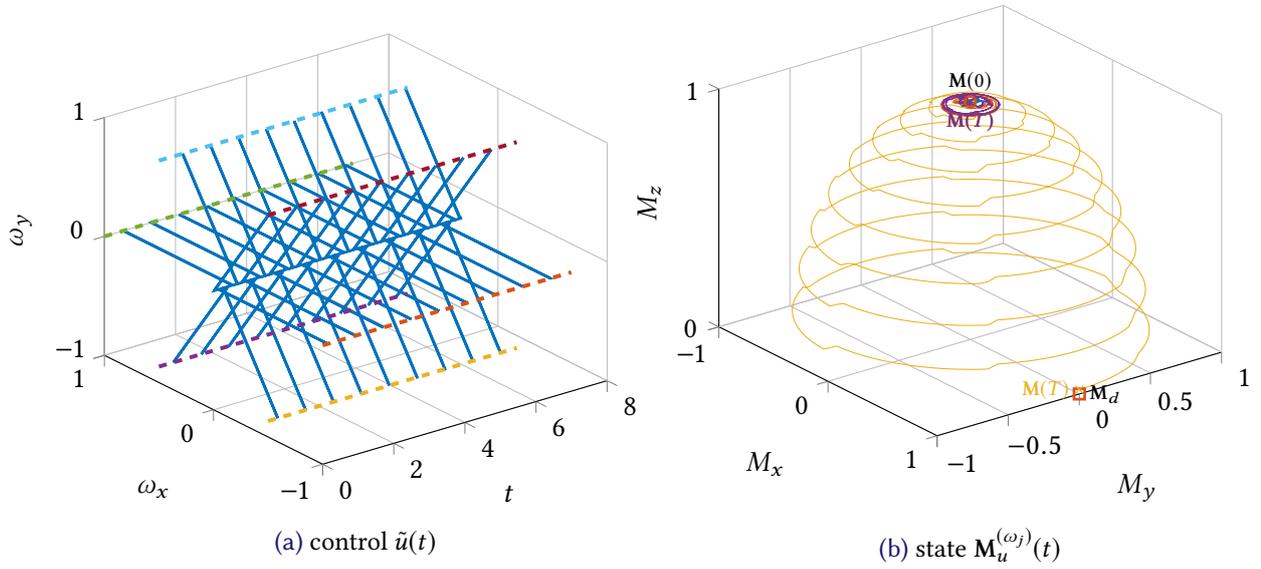}
        \caption{state $\mathbf{M}_u^{(\omega_j)}(t)$}\label{fig:Bloch4:M}
    \end{subfigure}
    \caption{Control and state for the Bloch model problem: $M=6$, $J=4$, $\mathbf M_d = (0,0,1)$ for $j=3$, $\mathbf M_0$ else}\label{fig:Bloch4}
\end{figure} 

\Cref{tab:iterBloch} summarizes the convergence behavior for the case $M=3$ and $J=1$. For a representative selection of values of $\gamma$, it shows the number of semi-smooth Newton iterations, the average number of GMRES iteration needed to solve a Newton step, the number of times a step of length less than $1$ was taken, and the number of nodes $t_m$ for which $u_\gamma(t_m)\notin\calM$. For moderate values of $\gamma$ (approximately $\gamma>10^{-6}$ in this case), very few iterations of both the semi-smooth Newton method and the inner GMRES method are required to reach the solution. If $\gamma$ is decreased further, however, the problem starts becoming significantly more difficult, requiring an increasing number of Newton iterations that in addition require a damping to lead to a decrease of the residual. These damped steps typically are taken after a few initial full steps and continue until the region of superlinear convergence is reached, after which the iteration terminates after a small number of full steps. The average number of GMRES steps, however, remains small. For $\gamma < 9.313\cdot10^{-8}$, the maximal number of semi-smooth Newton iterations are no longer sufficient to reach the given tolerance. However, the final row of the table demonstrates that already for $\gamma \approx 10^{-5}$ (where the convergence is still fast), the control is already almost perfectly multibang.
\begin{table}[t]
    \caption{Convergence behavior for the example in \cref{fig:Bloch1}: number of semi-smooth Newton steps, average number of GMRES iterations to solve a Newton step, number of times a line search was required, number of nodes $t_m$ with $u_\gamma(t_m)\notin\calM$}
    \label{tab:iterBloch}
    \centering
    \resizebox{\linewidth}{!}{%
        \begin{tabular}{lrrrrrrrrr}
            \toprule
            $\gamma$       & \num[round-precision=0]{1e2}  & \num[round-precision=0]{1.6e0} & \num[round-precision=0]{2.0e-1} & \num[round-precision=0]{1.2e-2} & \num[round-precision=0]{1.5e-3} & \num[round-precision=0]{1.9e-4} & \num[round-precision=0]{1.2e-5} & \num[round-precision=0]{1.5e-6} & \num[round-precision=0]{9.3e-8}\\
            \midrule
            \# SSN         & \num{3}    & \num{3}     & \num{4}      & \num{5}      & \num{5}      & \num{5}      & \num{4}      & \num{100}    & \num{101}\\
            avg. \# GMRES  & \num{3}    & \num{7}     & \num{7.5}    & \num{7.4}    & \num{7.8}    & \num{8.2}    & \num{3.75}   & \num{3.14}   & \num{4.3}  \\
            \# line search & \num{0}    & \num{0}     & \num{0}      & \num{0}      & \num{0}      & \num{0}      & \num{0}      & \num{98}     & \num{99} \\
            \# not MB     & \num{1000} & \num{1000}  & \num{862}    & \num{376}    & \num{191}    & \num{44}     & \num{3}      & \num{3}      & \num{3} \\
            \bottomrule
        \end{tabular}
    }
\end{table}

\subsection{Linearized elasticity}

We now address the behavior in the context of optimal control of elliptic partial differential equations for the model equations of two-dimensional linearized elasticity.
Here, we choose $\Omega=[0,1]\times[0,2]$  and $\Gamma=[0,1]\times\{0\}$, which models an elastic beam clamped at the bottom. The 
Lamé parameters are set to $\mu = \frac{E}{2(1+\nu)}$ and $\lambda = \frac{E\nu}{(1+\nu)(1-2\nu)}$ for the elastic modulus $E=20$ and the Poisson ratio $\nu =0.3$. We use a uniform structured mesh with $129$ vertices in each direction.
Since the state equation is linear, we use a direct solver for the Newton step. The Newton iteration is terminated if the active sets (i.e., the case distinctions in the definition of the Moreau--Yosida regularization) for each node coincide for two consecutive iterations, or if $50$ iterations are exceeded. The continuation in the regularization parameter $\gamma$ is performed as for the Bloch equation.

\Cref{fig:elasticExamples} shows the results for six different choices of target, multibang penalty, and control cost parameter.
In examples \ref{fig:elastic:1} to \ref{fig:elastic:4}, the target displacement $z(x)=R(x-(\frac12,1)^T)-x$ corresponds to a rotation $R\in SO(2)$ of the solid around its center.
Examples \ref{fig:elastic:1} and \ref{fig:elastic:2} use the penalty from \cref{sec:multibang:bloch} for $\alpha=10^{-3}$, while examples \ref{fig:elastic:3} and \ref{fig:elastic:4} use the penalty from \cref{sec:multibang:elast} for $\alpha=10^{-5}$ and $\alpha=10^{-3}$, respectively.
In all cases, the obtained control makes use of all control values in $\calM$ and aligns them with the rotation.
Furthermore, the center of the force vortex always lies slightly to the top right of the rotation center of the target state;
this allows a stronger overall rightward force in the lower part of the solid to compensate for the clamping at the bottom.
Note that unlike the case of (additional) gradient regularization of the control, small patches or sharp corners of the domains with homogeneous force are allowed.
\begin{figure}[t] 
    \begin{subfigure}[t]{0.15\linewidth}
        \centering
        \begin{tikzpicture}[x=\linewidth,y=\linewidth]
\path[use as bounding box] (-.5,-.55) rectangle (.5,.5);
\begin{axis}[%
    width=0.66\linewidth,
    height=0.66\linewidth,
    at={(-.33\linewidth,-.33\linewidth)},
    scale only axis,
    axis on top,
    xmin=-128,
    xmax=128,
    ymin=-128,
    ymax=128,
    axis line style={draw=none},
    ticks=none
    ]
    \addplot  graphics [xmin=-128,xmax=128,ymin=-128,ymax=128] {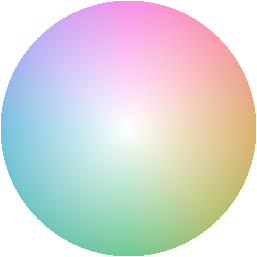};
\end{axis}
\begin{polaraxis}[%
    width=0.66\linewidth,
    height=0.66\linewidth,
    at={(-.33\linewidth,-.33\linewidth)},
    ymin=0,
    ymax=256,
    scale only axis,
    xtick={60,180,300},
    ytick={0,256},
    xticklabels={$\frac{\pi}3$,$\pi$,$-\frac{\pi}3$},
    yticklabels={$0$,$\sqrt{8}$},
    yticklabel style = {font=\tiny}
    ]
\end{polaraxis}
\end{tikzpicture}
        \begin{tikzpicture}[x=\linewidth,y=\linewidth]
\path[use as bounding box] (0,0) -- (1,0) -- (1,2.1) -- (0,2.1) -- cycle;    

\begin{axis}[%
width=\linewidth,
height=2\linewidth,
scale only axis,
axis on top,
clip=false,
xmin=0,
xmax=1,
ymin=0,
ymax=2,
axis background/.style={fill=white},
ticks=none
]
\addplot [forget plot] graphics [xmin=0, xmax=1, ymin=0, ymax=2] {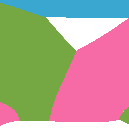};
\addplot[-Latex, 
color=black, 
point meta={sqrt((\thisrow{u})^2+(\thisrow{v})^2)}, 
point meta min=0, 
quiver={u=1.5*\thisrow{u}, v=1.5*\thisrow{v}, 
    every arrow/.append style={-{Latex[scale={1/1000*\pgfplotspointmetatransformed}]}}}]
 table[row sep=crcr] {%
x	y	u	v\\
0.0546875	0.109375	0.05	0.0866025403784439\\
0.0546875	0.359375	0.05	0.0866025403784439\\
0.0546875	0.609375	0.05	-0.0866025403784439\\
0.0546875	0.859375	0.05	-0.0866025403784439\\
0.0546875	1.109375	0.05	-0.0866025403784439\\
0.0546875	1.359375	0.05	-0.0866025403784439\\
0.0546875	1.609375	0.05	-0.0866025403784439\\
0.0546875	1.859375	0.05	-0.0866025403784439\\
0.1796875	0.359375	0.05	-0.0866025403784439\\
0.1796875	0.609375	0.05	-0.0866025403784439\\
0.1796875	0.859375	0.05	-0.0866025403784439\\
0.1796875	1.109375	0.05	-0.0866025403784439\\
0.1796875	1.359375	0.05	-0.0866025403784439\\
0.1796875	1.609375	0.05	-0.0866025403784439\\
0.1796875	1.859375	-0.1	-1.22464679914735e-17\\
0.3046875	0.359375	0.05	-0.0866025403784439\\
0.3046875	0.609375	0.05	-0.0866025403784439\\
0.3046875	0.859375	0.05	-0.0866025403784439\\
0.3046875	1.109375	0.05	-0.0866025403784439\\
0.3046875	1.359375	0.05	-0.0866025403784439\\
0.3046875	1.609375	0.05	-0.0866025403784439\\
0.3046875	1.859375	-0.1	-1.22464679914735e-17\\
0.4296875	0.109375	0.05	0.0866025403784439\\
0.4296875	0.359375	0.05	0.0866025403784439\\
0.4296875	0.609375	0.05	-0.0866025403784439\\
0.4296875	0.859375	0.05	-0.0866025403784439\\
0.4296875	1.109375	0.05	-0.0866025403784439\\
0.4296875	1.359375	0.05	-0.0866025403784439\\
0.4296875	1.859375	-0.1	-1.22464679914735e-17\\
0.5546875	0.109375	0.05	0.0866025403784439\\
0.5546875	0.359375	0.05	0.0866025403784439\\
0.5546875	0.609375	0.05	0.0866025403784439\\
0.5546875	0.859375	0.05	0.0866025403784439\\
0.5546875	1.109375	0.05	-0.0866025403784439\\
0.5546875	1.859375	-0.1	-1.22464679914735e-17\\
0.6796875	0.109375	0.05	0.0866025403784439\\
0.6796875	0.359375	0.05	0.0866025403784439\\
0.6796875	0.609375	0.05	0.0866025403784439\\
0.6796875	0.859375	0.05	0.0866025403784439\\
0.6796875	1.109375	0.05	0.0866025403784439\\
0.6796875	1.859375	-0.1	-1.22464679914735e-17\\
0.8046875	0.109375	0.05	0.0866025403784439\\
0.8046875	0.359375	0.05	0.0866025403784439\\
0.8046875	0.609375	0.05	0.0866025403784439\\
0.8046875	0.859375	0.05	0.0866025403784439\\
0.8046875	1.109375	0.05	0.0866025403784439\\
0.8046875	1.359375	0.05	0.0866025403784439\\
0.8046875	1.859375	-0.1	-1.22464679914735e-17\\
0.9296875	0.359375	0.05	0.0866025403784439\\
0.9296875	0.609375	0.05	0.0866025403784439\\
0.9296875	0.859375	0.05	0.0866025403784439\\
0.9296875	1.109375	0.05	0.0866025403784439\\
0.9296875	1.359375	0.05	0.0866025403784439\\
0.9296875	1.609375	0.05	0.0866025403784439\\
0.9296875	1.859375	-0.1	-1.22464679914735e-17\\
};
\end{axis}
\end{tikzpicture}
        \input{elast_rad3_deform.tikz}
        \caption{radial, $d=3$, $\alpha = 10^{-3}$}\label{fig:elastic:1}
    \end{subfigure}
    \hfill
    \begin{subfigure}[t]{0.15\linewidth}
        \centering
        \begin{tikzpicture}[x=\linewidth,y=\linewidth]
\path[use as bounding box] (-.5,-.55) rectangle (.5,.5);
\begin{axis}[%
    width=0.66\linewidth,
    height=0.66\linewidth,
    at={(-.33\linewidth,-.33\linewidth)},
    scale only axis,
    axis on top,
    xmin=-128,
    xmax=128,
    ymin=-128,
    ymax=128,
    axis line style={draw=none},
    ticks=none
    ]
    \addplot  graphics [xmin=-128,xmax=128,ymin=-128,ymax=128] {wheel.png};
\end{axis}
\begin{polaraxis}[%
    width=0.66\linewidth,
    height=0.66\linewidth,
    at={(-.33\linewidth,-.33\linewidth)},
    ymin=0,
    ymax=256,
    scale only axis,
    xtick={36,108,180,252,324},
    ytick={0,256},
    xticklabels={$\frac{\pi}5$,$\frac{2\pi}5$,$\pi$,$-\frac{2\pi}5$,$-\frac{\pi}5$},
    yticklabels={$0$,$\sqrt{8}$},
    yticklabel style = {font=\tiny}
    ]
\end{polaraxis}
\end{tikzpicture}
        \begin{tikzpicture}[x=\linewidth,y=\linewidth]
\path[use as bounding box] (0,0) -- (1,0) -- (1,2.1) -- (0,2.1) -- cycle;    

\begin{axis}[%
width=\linewidth,
height=2\linewidth,
scale only axis,
axis on top,
clip=false,
xmin=0,
xmax=1,
ymin=0,
ymax=2,
axis background/.style={fill=white},
ticks=none
]
\addplot [forget plot] graphics [xmin=0, xmax=1, ymin=0, ymax=2] {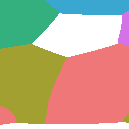};
\addplot[-Latex, 
color=black, 
point meta={sqrt((\thisrow{u})^2+(\thisrow{v})^2)}, 
point meta min=0, 
quiver={u=1.5*\thisrow{u}, v=1.5*\thisrow{v}, 
    every arrow/.append style={-{Latex[scale={1/1000*\pgfplotspointmetatransformed}]}}}]
 table[row sep=crcr] {%
x	y	u	v\\
0.0546875	0.109375	0.0809016994374947	0.0587785252292473\\
0.0546875	0.359375	0.0809016994374947	-0.0587785252292473\\
0.0546875	0.609375	0.0809016994374947	-0.0587785252292473\\
0.0546875	0.859375	0.0809016994374947	-0.0587785252292473\\
0.0546875	1.109375	0.0809016994374947	-0.0587785252292473\\
0.0546875	1.359375	-0.0309016994374947	-0.0951056516295154\\
0.0546875	1.609375	-0.0309016994374947	-0.0951056516295154\\
0.0546875	1.859375	-0.0309016994374947	-0.0951056516295154\\
0.1796875	0.109375	0.0809016994374947	-0.0587785252292473\\
0.1796875	0.359375	0.0809016994374947	-0.0587785252292473\\
0.1796875	0.609375	0.0809016994374947	-0.0587785252292473\\
0.1796875	0.859375	0.0809016994374947	-0.0587785252292473\\
0.1796875	1.109375	0.0809016994374947	-0.0587785252292473\\
0.1796875	1.359375	-0.0309016994374947	-0.0951056516295154\\
0.1796875	1.609375	-0.0309016994374947	-0.0951056516295154\\
0.1796875	1.859375	-0.0309016994374947	-0.0951056516295154\\
0.3046875	0.109375	0.0809016994374947	-0.0587785252292473\\
0.3046875	0.359375	0.0809016994374947	-0.0587785252292473\\
0.3046875	0.609375	0.0809016994374947	-0.0587785252292473\\
0.3046875	0.859375	0.0809016994374947	-0.0587785252292473\\
0.3046875	1.109375	0.0809016994374947	-0.0587785252292473\\
0.3046875	1.609375	-0.0309016994374947	-0.0951056516295154\\
0.3046875	1.859375	-0.0309016994374947	-0.0951056516295154\\
0.4296875	0.109375	0.0809016994374947	0.0587785252292473\\
0.4296875	0.359375	0.0809016994374947	0.0587785252292473\\
0.4296875	0.609375	0.0809016994374947	0.0587785252292473\\
0.4296875	0.859375	0.0809016994374947	-0.0587785252292473\\
0.4296875	1.109375	0.0809016994374947	-0.0587785252292473\\
0.4296875	1.859375	-0.0309016994374947	-0.0951056516295154\\
0.5546875	0.109375	0.0809016994374947	0.0587785252292473\\
0.5546875	0.359375	0.0809016994374947	0.0587785252292473\\
0.5546875	0.609375	0.0809016994374947	0.0587785252292473\\
0.5546875	0.859375	0.0809016994374947	0.0587785252292473\\
0.5546875	1.109375	0.0809016994374947	0.0587785252292473\\
0.5546875	1.859375	-0.1	-1.22464679914735e-17\\
0.6796875	0.109375	0.0809016994374947	0.0587785252292473\\
0.6796875	0.359375	0.0809016994374947	0.0587785252292473\\
0.6796875	0.609375	0.0809016994374947	0.0587785252292473\\
0.6796875	0.859375	0.0809016994374947	0.0587785252292473\\
0.6796875	1.109375	0.0809016994374947	0.0587785252292473\\
0.6796875	1.859375	-0.1	-1.22464679914735e-17\\
0.8046875	0.109375	0.0809016994374947	0.0587785252292473\\
0.8046875	0.359375	0.0809016994374947	0.0587785252292473\\
0.8046875	0.609375	0.0809016994374947	0.0587785252292473\\
0.8046875	0.859375	0.0809016994374947	0.0587785252292473\\
0.8046875	1.109375	0.0809016994374947	0.0587785252292473\\
0.8046875	1.859375	-0.1	-1.22464679914735e-17\\
0.9296875	0.109375	0.0809016994374947	0.0587785252292473\\
0.9296875	0.359375	0.0809016994374947	0.0587785252292473\\
0.9296875	0.609375	0.0809016994374947	0.0587785252292473\\
0.9296875	0.859375	0.0809016994374947	0.0587785252292473\\
0.9296875	1.109375	0.0809016994374947	0.0587785252292473\\
0.9296875	1.359375	-0.0309016994374947	0.0951056516295154\\
0.9296875	1.859375	-0.1	-1.22464679914735e-17\\
};
\end{axis}
\end{tikzpicture}
        \input{elast_rad5_deform.tikz}
        \caption{radial, $d=5$, $\alpha = 10^{-3}$}\label{fig:elastic:2}
    \end{subfigure}
    \hfill
    \begin{subfigure}[t]{0.15\linewidth}
        \centering
        \begin{tikzpicture}[x=\linewidth,y=\linewidth]
\path[use as bounding box] (-.5,-.55) rectangle (.5,.5);
\begin{axis}[%
    width=0.66\linewidth,
    height=0.66\linewidth,
    at={(-.33\linewidth,-.33\linewidth)},
    scale only axis,
    axis on top,
    xmin=-128,
    xmax=128,
    ymin=-128,
    ymax=128,
    axis line style={draw=none},
    ticks=none
    ]
    \addplot  graphics [xmin=-128,xmax=128,ymin=-128,ymax=128] {wheel.png};
\end{axis}
\begin{polaraxis}[%
    width=0.66\linewidth,
    height=0.66\linewidth,
    at={(-.33\linewidth,-.33\linewidth)},
    ymin=0,
    ymax=256,
    scale only axis,
    xtick={45,135,225,315},
    ytick={128,256},
    xticklabels={$\frac{\pi}4$,$\frac{3\pi}{4}\!$,$\frac{-3\pi}4\!$,$\frac{-\pi}4$},
    yticklabels={$\sqrt{2}$,$\sqrt{8}$},
    yticklabel style = {font=\tiny}
    ]
\end{polaraxis}
\end{tikzpicture}
        \begin{tikzpicture}[x=\linewidth,y=\linewidth]
\path[use as bounding box] (0,0) -- (1,0) -- (1,2.1) -- (0,2.1) -- cycle;    

\begin{axis}[%
width=\linewidth,
height=2\linewidth,
scale only axis,
axis on top,
clip=false,
xmin=0,
xmax=1,
ymin=0,
ymax=2,
axis background/.style={fill=white},
ticks=none
]
\addplot [forget plot] graphics [xmin=0, xmax=1, ymin=0, ymax=2] {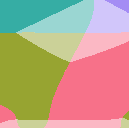};
\addplot[-Latex, 
color=black, 
point meta={sqrt((\thisrow{u})^2+(\thisrow{v})^2)}, 
point meta min=0, 
quiver={u=1.5*\thisrow{u}, v=1.5*\thisrow{v}, 
    every arrow/.append style={-{Latex[scale={1/1000*\pgfplotspointmetatransformed}]}}}]
 table[row sep=crcr] {%
x	y	u	v\\
0.0546875	0.109375	0.0353553390593274	0.0353553390593274\\
0.0546875	0.359375	0.0707106781186548	-0.0707106781186548\\
0.0546875	0.609375	0.0707106781186548	-0.0707106781186548\\
0.0546875	0.859375	0.0707106781186548	-0.0707106781186548\\
0.0546875	1.109375	0.0707106781186548	-0.0707106781186548\\
0.0546875	1.359375	0.0707106781186548	-0.0707106781186548\\
0.0546875	1.609375	-0.0707106781186548	-0.0707106781186548\\
0.0546875	1.859375	-0.0707106781186548	-0.0707106781186548\\
0.1796875	0.109375	0.0353553390593274	-0.0353553390593274\\
0.1796875	0.359375	0.0707106781186548	-0.0707106781186548\\
0.1796875	0.609375	0.0707106781186548	-0.0707106781186548\\
0.1796875	0.859375	0.0707106781186548	-0.0707106781186548\\
0.1796875	1.109375	0.0707106781186548	-0.0707106781186548\\
0.1796875	1.359375	0.0707106781186548	-0.0707106781186548\\
0.1796875	1.609375	-0.0707106781186548	-0.0707106781186548\\
0.1796875	1.859375	-0.0707106781186548	-0.0707106781186548\\
0.3046875	0.109375	0.0353553390593274	-0.0353553390593274\\
0.3046875	0.359375	0.0707106781186548	-0.0707106781186548\\
0.3046875	0.609375	0.0707106781186548	-0.0707106781186548\\
0.3046875	0.859375	0.0707106781186548	-0.0707106781186548\\
0.3046875	1.109375	0.0707106781186548	-0.0707106781186548\\
0.3046875	1.359375	0.0353553390593274	-0.0353553390593274\\
0.3046875	1.609375	-0.0353553390593274	-0.0353553390593274\\
0.3046875	1.859375	-0.0707106781186548	-0.0707106781186548\\
0.4296875	0.109375	0.0353553390593274	0.0353553390593274\\
0.4296875	0.359375	0.0707106781186548	0.0707106781186548\\
0.4296875	0.609375	0.0707106781186548	-0.0707106781186548\\
0.4296875	0.859375	0.0707106781186548	-0.0707106781186548\\
0.4296875	1.109375	0.0707106781186548	-0.0707106781186548\\
0.4296875	1.359375	0.0353553390593274	-0.0353553390593274\\
0.4296875	1.609375	-0.0353553390593274	-0.0353553390593274\\
0.4296875	1.859375	-0.0707106781186548	-0.0707106781186548\\
0.5546875	0.109375	0.0353553390593274	0.0353553390593274\\
0.5546875	0.359375	0.0707106781186548	0.0707106781186548\\
0.5546875	0.609375	0.0707106781186548	0.0707106781186548\\
0.5546875	0.859375	0.0707106781186548	0.0707106781186548\\
0.5546875	1.109375	0.0353553390593274	-0.0353553390593274\\
0.5546875	1.359375	0.0353553390593274	-0.0353553390593274\\
0.5546875	1.609375	-0.0353553390593274	-0.0353553390593274\\
0.5546875	1.859375	-0.0353553390593274	-0.0353553390593274\\
0.6796875	0.109375	0.0353553390593274	0.0353553390593274\\
0.6796875	0.359375	0.0707106781186548	0.0707106781186548\\
0.6796875	0.609375	0.0707106781186548	0.0707106781186548\\
0.6796875	0.859375	0.0707106781186548	0.0707106781186548\\
0.6796875	1.109375	0.0707106781186548	0.0707106781186548\\
0.6796875	1.359375	0.0353553390593274	0.0353553390593274\\
0.6796875	1.609375	-0.0353553390593274	-0.0353553390593274\\
0.6796875	1.859375	-0.0353553390593274	-0.0353553390593274\\
0.8046875	0.109375	0.0353553390593274	0.0353553390593274\\
0.8046875	0.359375	0.0707106781186548	0.0707106781186548\\
0.8046875	0.609375	0.0707106781186548	0.0707106781186548\\
0.8046875	0.859375	0.0707106781186548	0.0707106781186548\\
0.8046875	1.109375	0.0707106781186548	0.0707106781186548\\
0.8046875	1.359375	0.0353553390593274	0.0353553390593274\\
0.8046875	1.609375	-0.0353553390593274	0.0353553390593274\\
0.8046875	1.859375	-0.0353553390593274	0.0353553390593274\\
0.9296875	0.109375	0.0353553390593274	0.0353553390593274\\
0.9296875	0.359375	0.0707106781186548	0.0707106781186548\\
0.9296875	0.609375	0.0707106781186548	0.0707106781186548\\
0.9296875	0.859375	0.0707106781186548	0.0707106781186548\\
0.9296875	1.109375	0.0707106781186548	0.0707106781186548\\
0.9296875	1.359375	0.0353553390593274	0.0353553390593274\\
0.9296875	1.609375	-0.0353553390593274	0.0353553390593274\\
0.9296875	1.859375	-0.0353553390593274	0.0353553390593274\\
};
\end{axis}
\end{tikzpicture}
        \input{elast_box-3_deform.tikz}
        \caption{concentric, $\alpha = 10^{-3}$}\label{fig:elastic:3}
    \end{subfigure}
    \hfill
    \begin{subfigure}[t]{0.15\linewidth}
        \centering
        \begin{tikzpicture}[x=\linewidth,y=\linewidth]
\path[use as bounding box] (-.5,-.55) rectangle (.5,.5);
\begin{axis}[%
    width=0.66\linewidth,
    height=0.66\linewidth,
    at={(-.33\linewidth,-.33\linewidth)},
    scale only axis,
    axis on top,
    xmin=-128,
    xmax=128,
    ymin=-128,
    ymax=128,
    axis line style={draw=none},
    ticks=none
    ]
    \addplot  graphics [xmin=-128,xmax=128,ymin=-128,ymax=128] {wheel.png};
\end{axis}
\begin{polaraxis}[%
    width=0.66\linewidth,
    height=0.66\linewidth,
    at={(-.33\linewidth,-.33\linewidth)},
    ymin=0,
    ymax=256,
    scale only axis,
    xtick={45,135,225,315},
    ytick={128,256},
    xticklabels={$\frac{\pi}4$,$\frac{3\pi}{4}\!$,$\frac{-3\pi}4\!$,$\frac{-\pi}4$},
    yticklabels={$\sqrt{2}$,$\sqrt{8}$},
    yticklabel style = {font=\tiny}
    ]
\end{polaraxis}
\end{tikzpicture}
        \begin{tikzpicture}[x=\linewidth,y=\linewidth]
\path[use as bounding box] (0,0) -- (1,0) -- (1,2.1) -- (0,2.1) -- cycle;    

\begin{axis}[%
width=\linewidth,
height=2\linewidth,
scale only axis,
axis on top,
clip=false,
xmin=0,
xmax=1,
ymin=0,
ymax=2,
axis background/.style={fill=white},
ticks=none
]
\addplot [forget plot] graphics [xmin=0, xmax=1, ymin=0, ymax=2] {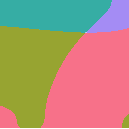};
\addplot[-Latex, 
color=black, 
point meta={sqrt((\thisrow{u})^2+(\thisrow{v})^2)}, 
point meta min=0, 
quiver={u=1.5*\thisrow{u}, v=1.5*\thisrow{v}, 
    every arrow/.append style={-{Latex[scale={1/1000*\pgfplotspointmetatransformed}]}}}]
 table[row sep=crcr] {%
x	y	u	v\\
0.0546875	0.109375	0.0707106781186548	0.0707106781186548\\
0.0546875	0.359375	0.0707106781186548	-0.0707106781186548\\
0.0546875	0.609375	0.0707106781186548	-0.0707106781186548\\
0.0546875	0.859375	0.0707106781186548	-0.0707106781186548\\
0.0546875	1.109375	0.0707106781186548	-0.0707106781186548\\
0.0546875	1.359375	0.0707106781186548	-0.0707106781186548\\
0.0546875	1.609375	-0.0707106781186548	-0.0707106781186548\\
0.0546875	1.859375	-0.0707106781186548	-0.0707106781186548\\
0.1796875	0.109375	0.0707106781186548	-0.0707106781186548\\
0.1796875	0.359375	0.0707106781186548	-0.0707106781186548\\
0.1796875	0.609375	0.0707106781186548	-0.0707106781186548\\
0.1796875	0.859375	0.0707106781186548	-0.0707106781186548\\
0.1796875	1.109375	0.0707106781186548	-0.0707106781186548\\
0.1796875	1.359375	0.0707106781186548	-0.0707106781186548\\
0.1796875	1.609375	-0.0707106781186548	-0.0707106781186548\\
0.1796875	1.859375	-0.0707106781186548	-0.0707106781186548\\
0.3046875	0.109375	0.0707106781186548	-0.0707106781186548\\
0.3046875	0.359375	0.0707106781186548	-0.0707106781186548\\
0.3046875	0.609375	0.0707106781186548	-0.0707106781186548\\
0.3046875	0.859375	0.0707106781186548	-0.0707106781186548\\
0.3046875	1.109375	0.0707106781186548	-0.0707106781186548\\
0.3046875	1.359375	0.0707106781186548	-0.0707106781186548\\
0.3046875	1.609375	-0.0707106781186548	-0.0707106781186548\\
0.3046875	1.859375	-0.0707106781186548	-0.0707106781186548\\
0.4296875	0.109375	0.0707106781186548	0.0707106781186548\\
0.4296875	0.359375	0.0707106781186548	0.0707106781186548\\
0.4296875	0.609375	0.0707106781186548	0.0707106781186548\\
0.4296875	0.859375	0.0707106781186548	-0.0707106781186548\\
0.4296875	1.109375	0.0707106781186548	-0.0707106781186548\\
0.4296875	1.359375	0.0707106781186548	-0.0707106781186548\\
0.4296875	1.609375	-0.0707106781186548	-0.0707106781186548\\
0.4296875	1.859375	-0.0707106781186548	-0.0707106781186548\\
0.5546875	0.109375	0.0707106781186548	0.0707106781186548\\
0.5546875	0.359375	0.0707106781186548	0.0707106781186548\\
0.5546875	0.609375	0.0707106781186548	0.0707106781186548\\
0.5546875	0.859375	0.0707106781186548	0.0707106781186548\\
0.5546875	1.109375	0.0707106781186548	0.0707106781186548\\
0.5546875	1.359375	0.0707106781186548	-0.0707106781186548\\
0.5546875	1.609375	-0.0707106781186548	-0.0707106781186548\\
0.5546875	1.859375	-0.0707106781186548	-0.0707106781186548\\
0.6796875	0.109375	0.0707106781186548	0.0707106781186548\\
0.6796875	0.359375	0.0707106781186548	0.0707106781186548\\
0.6796875	0.609375	0.0707106781186548	0.0707106781186548\\
0.6796875	0.859375	0.0707106781186548	0.0707106781186548\\
0.6796875	1.109375	0.0707106781186548	0.0707106781186548\\
0.6796875	1.359375	0.0707106781186548	0.0707106781186548\\
0.6796875	1.609375	-0.0707106781186548	-0.0707106781186548\\
0.6796875	1.859375	-0.0707106781186548	-0.0707106781186548\\
0.8046875	0.109375	0.0707106781186548	0.0707106781186548\\
0.8046875	0.359375	0.0707106781186548	0.0707106781186548\\
0.8046875	0.609375	0.0707106781186548	0.0707106781186548\\
0.8046875	0.859375	0.0707106781186548	0.0707106781186548\\
0.8046875	1.109375	0.0707106781186548	0.0707106781186548\\
0.8046875	1.359375	0.0707106781186548	0.0707106781186548\\
0.8046875	1.609375	-0.0707106781186548	0.0707106781186548\\
0.8046875	1.859375	-0.0707106781186548	-0.0707106781186548\\
0.9296875	0.109375	0.0707106781186548	0.0707106781186548\\
0.9296875	0.359375	0.0707106781186548	0.0707106781186548\\
0.9296875	0.609375	0.0707106781186548	0.0707106781186548\\
0.9296875	0.859375	0.0707106781186548	0.0707106781186548\\
0.9296875	1.109375	0.0707106781186548	0.0707106781186548\\
0.9296875	1.359375	0.0707106781186548	0.0707106781186548\\
0.9296875	1.609375	-0.0707106781186548	0.0707106781186548\\
0.9296875	1.859375	-0.0707106781186548	0.0707106781186548\\
};
\end{axis}
\end{tikzpicture}
        \input{elast_box-5_deform.tikz}
        \caption{concentric, $\alpha = 10^{-5}$}\label{fig:elastic:4}
    \end{subfigure}
    \hfill
    \begin{subfigure}[t]{0.15\linewidth}
        \centering
        \begin{tikzpicture}[x=\linewidth,y=\linewidth]
\path[use as bounding box] (-.5,-.55) rectangle (.5,.5);
\begin{axis}[%
    width=0.66\linewidth,
    height=0.66\linewidth,
    at={(-.33\linewidth,-.33\linewidth)},
    scale only axis,
    axis on top,
    xmin=-128,
    xmax=128,
    ymin=-128,
    ymax=128,
    axis line style={draw=none},
    ticks=none
    ]
    \addplot  graphics [xmin=-128,xmax=128,ymin=-128,ymax=128] {wheel.png};
\end{axis}
\begin{polaraxis}[%
    width=0.66\linewidth,
    height=0.66\linewidth,
    at={(-.33\linewidth,-.33\linewidth)},
    ymin=0,
    ymax=256,
    scale only axis,
    xtick={45,135,225,315},
    ytick={128,256},
    xticklabels={$\frac{\pi}4$,$\frac{3\pi}{4}\!$,$\frac{-3\pi}4\!$,$\frac{-\pi}4$},
    yticklabels={$\sqrt{2}$,$\sqrt{8}$},
    yticklabel style = {font=\tiny}
    ]
\end{polaraxis}
\end{tikzpicture}
        \begin{tikzpicture}[x=\linewidth,y=\linewidth]
\path[use as bounding box] (0,0) -- (1,0) -- (1,2.1) -- (0,2.1) -- cycle;    

\begin{axis}[%
width=\linewidth,
height=2\linewidth,
scale only axis,
axis on top,
clip=false,
xmin=0,
xmax=1,
ymin=0,
ymax=2,
axis background/.style={fill=white},
ticks=none
]
\addplot [forget plot] graphics [xmin=0, xmax=1, ymin=0, ymax=2] {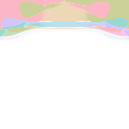};
\addplot[-Latex, 
color=black, 
point meta={sqrt((\thisrow{u})^2+(\thisrow{v})^2)}, 
point meta min=0, 
point meta max=0.1,
quiver={u=1.5*\thisrow{u}, v=1.5*\thisrow{v}, 
    every arrow/.append style={-{Latex[scale={1/1000*\pgfplotspointmetatransformed}]}}}]
 table[row sep=crcr] {%
x	y	u	v\\
0.0546875	1.359375	-0.000283327586604607	0.000995558151701052\\
0.0546875	1.609375	-0.0353553390593274	0.0353553390593274\\
0.0546875	1.859375	0.0353553390593274	0.0353553390593274\\
0.1796875	1.359375	0.000263743367003969	-0.000410714927635264\\
0.1796875	1.609375	-0.0353553390593274	-0.0263140159852521\\
0.1796875	1.859375	0.0353553390593274	-0.0353553390593274\\
0.3046875	1.609375	-0.0353553390593274	-0.0273054273281385\\
0.3046875	1.859375	0.0353553390593274	-0.0353553390593274\\
0.4296875	1.609375	-0.0353553390593274	-0.00560682635689849\\
0.4296875	1.859375	0.0353553390593274	-0.00167050569451843\\
0.5546875	1.609375	-0.0353553390593274	0.00406141696236012\\
0.5546875	1.859375	0.0353553390593274	0.00274745639862095\\
0.6796875	1.609375	-0.0353553390593274	0.0230661354414778\\
0.6796875	1.859375	0.0353553390593274	0.0353553390593274\\
0.8046875	1.359375	0.000175031022078319	0.000271420370308644\\
0.8046875	1.609375	-0.0353553390593274	0.00715909474315652\\
0.8046875	1.859375	0.0353553390593274	0.0353553390593274\\
0.9296875	1.359375	-0.000186682252276792	-0.000701918731637707\\
0.9296875	1.609375	-0.0353553390593274	-0.0353553390593274\\
0.9296875	1.859375	0.0353553390593274	-0.0353553390593274\\
};
\end{axis}
\end{tikzpicture}
        \input{elast_box_c2_deform.tikz}
        \caption{concentric, $\alpha = 10^{-5}$}\label{fig:elastic:5}
    \end{subfigure}
    \hfill
    \begin{subfigure}[t]{0.15\linewidth}
        \centering
        \begin{tikzpicture}[x=\linewidth,y=\linewidth]
\path[use as bounding box] (-.5,-.55) rectangle (.5,.5);
\begin{axis}[%
    width=0.66\linewidth,
    height=0.66\linewidth,
    at={(-.33\linewidth,-.33\linewidth)},
    scale only axis,
    axis on top,
    xmin=-128,
    xmax=128,
    ymin=-128,
    ymax=128,
    axis line style={draw=none},
    ticks=none
    ]
    \addplot  graphics [xmin=-128,xmax=128,ymin=-128,ymax=128] {wheel.png};
\end{axis}
\begin{polaraxis}[%
    width=0.66\linewidth,
    height=0.66\linewidth,
    at={(-.33\linewidth,-.33\linewidth)},
    ymin=0,
    ymax=256,
    scale only axis,
    xtick={45,135,225,315},
    ytick={128,256},
    xticklabels={$\frac{\pi}4$,$\frac{3\pi}{4}\!$,$\frac{-3\pi}4\!$,$\frac{-\pi}4$},
    yticklabels={$\sqrt{2}$,$\sqrt{8}$},
    yticklabel style = {font=\tiny}
    ]
\end{polaraxis}
\end{tikzpicture}
        \begin{tikzpicture}[x=\linewidth,y=\linewidth]
\path[use as bounding box] (0,0) -- (1,0) -- (1,2.1) -- (0,2.1) -- cycle;    

\begin{axis}[%
width=\linewidth,
height=2\linewidth,
scale only axis,
axis on top,
clip=false,
xmin=0,
xmax=1,
ymin=0,
ymax=2,
axis background/.style={fill=white},
ticks=none
]
\addplot [forget plot] graphics [xmin=0, xmax=1, ymin=0, ymax=2] {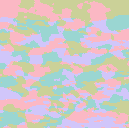};
\addplot[-Latex, 
color=black, 
point meta={sqrt((\thisrow{u})^2+(\thisrow{v})^2)}, 
point meta min=0, 
point meta max=0.1,
quiver={u=1.5*\thisrow{u}, v=1.5*\thisrow{v}, 
    every arrow/.append style={-{Latex[scale={1/1000*\pgfplotspointmetatransformed}]}}}]
 table[row sep=crcr] {%
x	y	u	v\\
0.0546875	0.109375	-0.0353553390593274	-0.0353553390593274\\
0.0546875	0.359375	0.0353553390593274	-0.0353553390593274\\
0.0546875	0.609375	-0.0353553390593274	0.0353553390593274\\
0.0546875	0.859375	0.0353553390593274	0.0353553390593274\\
0.0546875	1.109375	-0.0353553390593274	0.0353553390593274\\
0.0546875	1.359375	0.0353553390593274	-0.0353553390593274\\
0.0546875	1.609375	0.0353553390593274	0.0353553390593274\\
0.0546875	1.859375	0.0353553390593274	0.0353553390593274\\
0.1796875	0.109375	-0.0353553390593274	-0.0353553390593274\\
0.1796875	0.359375	0.0353553390593274	0.0353553390593274\\
0.1796875	0.609375	0.0353553390593274	-0.0353553390593274\\
0.1796875	0.859375	-0.0353553390593274	0.0353553390593274\\
0.1796875	1.109375	-0.0353553390593274	-0.0353553390593274\\
0.1796875	1.359375	-0.0353553390593274	0.0353553390593274\\
0.1796875	1.609375	-0.0353553390593274	-0.0353553390593274\\
0.1796875	1.859375	0.0353553390593274	0.0353553390593274\\
0.3046875	0.109375	-0.0353553390593274	-0.0353553390593274\\
0.3046875	0.359375	-0.0353553390593274	0.0353553390593274\\
0.3046875	0.609375	-0.0353553390593274	-0.0353553390593274\\
0.3046875	0.859375	0.0353553390593274	0.0353553390593274\\
0.3046875	1.109375	-0.0353553390593274	0.0353553390593274\\
0.3046875	1.359375	-0.0353553390593274	0.0353553390593274\\
0.3046875	1.609375	0.0353553390593274	-0.0353553390593274\\
0.3046875	1.859375	0.0353553390593274	0.0353553390593274\\
0.4296875	0.109375	0.0353553390593274	-0.0353553390593274\\
0.4296875	0.359375	0.0353553390593274	0.0353553390593274\\
0.4296875	0.609375	-0.0353553390593274	-0.00132554437269138\\
0.4296875	0.859375	-0.0353553390593274	-0.0353553390593274\\
0.4296875	1.109375	-0.0353553390593274	-0.0353553390593274\\
0.4296875	1.359375	0.0353553390593274	0.0353553390593274\\
0.4296875	1.609375	-0.0353553390593274	-0.0353553390593274\\
0.4296875	1.859375	0.0353553390593274	-0.0353553390593274\\
0.5546875	0.109375	-0.0353553390593274	0.0353553390593274\\
0.5546875	0.359375	0.0353553390593274	-0.0353553390593274\\
0.5546875	0.609375	-0.0353553390593274	-0.0353553390593274\\
0.5546875	0.859375	-0.0353553390593274	0.0353553390593274\\
0.5546875	1.109375	-0.0353553390593274	0.0353553390593274\\
0.5546875	1.359375	0.0353553390593274	-0.0353553390593274\\
0.5546875	1.609375	0.0353553390593274	0.0353553390593274\\
0.5546875	1.859375	0.0353553390593274	-0.0353553390593274\\
0.6796875	0.109375	0.0353553390593274	0.0353553390593274\\
0.6796875	0.359375	-0.0353553390593274	-0.0353553390593274\\
0.6796875	0.609375	0.0353553390593274	-0.0353553390593274\\
0.6796875	0.859375	-0.0353553390593274	-0.0353553390593274\\
0.6796875	1.109375	-0.0353553390593274	0.0353553390593274\\
0.6796875	1.359375	-0.0353553390593274	0.0353553390593274\\
0.6796875	1.609375	0.0353553390593274	-0.0353553390593274\\
0.6796875	1.859375	0.0353553390593274	-0.0353553390593274\\
0.8046875	0.109375	-0.0353553390593274	0.0353553390593274\\
0.8046875	0.359375	-0.0353553390593274	-0.0353553390593274\\
0.8046875	0.609375	-0.0353553390593274	0.0353553390593274\\
0.8046875	0.859375	-0.0353553390593274	0.0353553390593274\\
0.8046875	1.109375	-0.0353553390593274	-0.0353553390593274\\
0.8046875	1.359375	-0.0353553390593274	0.0353553390593274\\
0.8046875	1.609375	-0.0353553390593274	0.0353553390593274\\
0.8046875	1.859375	0.0353553390593274	0.0353553390593274\\
0.9296875	0.109375	0.0353553390593274	0.0353553390593274\\
0.9296875	0.359375	0.0353553390593274	-0.0353553390593274\\
0.9296875	0.609375	-0.0353553390593274	-0.0353553390593274\\
0.9296875	0.859375	-0.0353553390593274	-0.0353553390593274\\
0.9296875	1.109375	0.0353553390593274	0.0353553390593274\\
0.9296875	1.359375	0.0353553390593274	0.0353553390593274\\
0.9296875	1.609375	-0.0353553390593274	-0.0353553390593274\\
0.9296875	1.859375	0.0353553390593274	-0.0353553390593274\\
};
\end{axis}
\end{tikzpicture}
        \input{elast_box_c3_deform.tikz}
        \caption{concentric, $\alpha = 10^{-5}$}\label{fig:elastic:6}
    \end{subfigure}
    \caption{Control (top rows: phase and magnitude color coded as shown in color wheel with values in $\calM$ indicated, additionally indicated by arrows) and state (bottom row: target deformation in gray, achieved deformation in red) for the elasticity model problem}
    \label{fig:elasticExamples}
\end{figure}

Example \ref{fig:elastic:5} shows that the control is not guaranteed to take values in $\calM$; here, the target displacement $z$ is the displacement induced by a deadload to the left applied at the top domain boundary.
Since the target was induced by a forcing with zero load throughout the bulk material, the optimal control mainly takes the non-preferred value of zero.
However, a slight random perturbation of $z$ again leads to a pure multibang control, as shown in example \ref{fig:elastic:6}.

We again show the convergence behavior for the example in \cref{fig:elastic:3} in \cref{tab:iterElastic}. Since this example is linear, only a few Newton itertions ($2$ to $6$) are required for all values of $\gamma$, and correspondingly only few line searches are carried out for $\gamma<10^{-5}$. As before, the multibang structure is already strongly promoted for $\gamma\approx 10^{-6}$. (Let us point out that the elastic body is fixed at the bottom boundary so that the control has to be $0$ there, which for this example does not lie in $\calM$.)
\begin{table}[b]
    \caption{Convergence behavior for the example in \cref{fig:elastic:3}: number of semi-smooth Newton steps, number of times a line search was required, number of nodes with $u_\gamma(x)\notin\calM$}
    \label{tab:iterElastic}
    \centering
    \resizebox{\linewidth}{!}{%
        \begin{tabular}{lrrrrrrrrrrrrr}
            \toprule
            $\gamma$       & \num[round-precision = 0]{1.953e-1} & \num[round-precision = 0]{1.221e-2} & \num[round-precision = 0]{1.526e-3} & \num[round-precision = 0]{1.907e-4} & \num[round-precision = 0]{1.192e-5} & \num[round-precision = 0]{1.490e-6} & \num[round-precision = 0]{1.863e-7} & \num[round-precision = 0]{1.164e-8} & \num[round-precision = 0]{1.445e-9} & \num[round-precision = 0]{1.819e-10}\\
            \midrule
            \# SSN         & \num{2}    & \num{4}    & \num{5}    & \num{5}    & \num{4}   & \num{6}  & \num{4}  & \num{4}  & \num{5}  & \num{6} \\
            \# line search & \num{0}    & \num{0}    & \num{0}    & \num{0}    & \num{0}   & \num{2}  & \num{1}  & \num{2}  & \num{3}  & \num{4}\\
            \# not MB      & \num{4225} & \num{4210} & \num{3747} & \num{1245} & \num{179} & \num{84} & \num{71} & \num{68} & \num{68} & \num{68}  \\
            \bottomrule	
        \end{tabular}
    }
\end{table}

\clearpage

\section{Conclusion}

A preference for a small number of predefined discrete control values can be achieved by a piecewise affine pointwise regularization term whose corners lie at the preferred values.
In contrast to the case of scalar controls treated in \cite{CK:2013,CK:2015}, the case of vector-valued controls allows giving multiple control values an equal preference,
and numerical experiments show that this feature is indeed exploited by the optimal control.
Furthermore, the optimal control problems leading to admissible controls turn out to be dense among a family of control problems.
A more precise characterization of control problems with admissible solutions would be desirable and should be further investigated.
For instance, for certain control problems such as the elasticity-based example, one might conjecture that targets leading to non-multibang controls are nowhere dense.

\appendix

\section{Properties of Bloch equation}\label{sec:BlochProperties}
Here we verify that the state operator \eqref{eq:settingBloch} satisfies the required assumptions \ref{enm:weakWeakContinuity}--\ref{enm:adjointConvergence}.
In the following, a subscript to $\mathbf M^{(\omega)}$ and $\mathbf B^{(\omega)}$ shall always refer to the chosen control $u$.

\begin{proposition}\label{thm:operatorProperties}
    The operator $S$ as defined in \eqref{eq:settingBloch} is well-defined and satisfies \ref{enm:weakWeakContinuity}--\ref{enm:compactness}.
\end{proposition}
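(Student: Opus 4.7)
The plan is to treat the Bloch equation as a linear non-autonomous system with skew-symmetric coefficients and extract all three properties from routine ODE estimates, exploiting the fact that the range of $S$ is finite-dimensional.

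First I would verify well-definedness. Writing the Bloch equation as $\dot{\mathbf M}^{(\omega)} = A(\mathbf B_u^{(\omega)})\mathbf M^{(\omega)}$, where $A(b)$ is the skew-symmetric matrix such that $A(b)v = v \times b$, gives a linear ODE whose coefficient matrix lies in $L^2((0,T);\R^{3\times 3})$ for any $u \in L^2((0,T);\R^2)$. Carathéodory theory yields a unique absolutely continuous local solution; the skew-symmetry of $A$ implies $\frac{d}{dt}\abs{\mathbf M^{(\omega)}}^2 = 2\mathbf M^{(\omega)}\cdot(\mathbf M^{(\omega)}\times\mathbf B_u^{(\omega)}) = 0$, so $\abs{\mathbf M^{(\omega)}(t)} \equiv 1$ and the solution exists on all of $[0,T]$. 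This shows $S$ is well-defined with image contained in a fixed ball of $(\R^3)^J$.

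For \ref{enm:weakWeakContinuity}, let $u_n \rightharpoonup u$ in $U$. The pointwise bound $\abs{\mathbf M_{u_n}^{(\omega)}} = 1$ together with $\abs{\dot{\mathbf M}_{u_n}^{(\omega)}} \leq \abs{\mathbf B_{u_n}^{(\omega)}} \in L^2$ (uniformly in $n$ by weak boundedness of $u_n$) yields a uniform bound in $H^1((0,T);\R^3)$, and by Aubin--Lions / Arzelà--Ascoli a subsequence converges uniformly to some $\widetilde{\mathbf M}^{(\omega)}$. In the integral identity $\mathbf M_{u_n}^{(\omega)}(t) = \mathbf M_0 + \int_0^t \mathbf M_{u_n}^{(\omega)}(s)\times\mathbf B_{u_n}^{(\omega)}(s)\,\dd s$, the $\omega$-component of $\mathbf B_{u_n}^{(\omega)}$ passes to the limit by uniform convergence, while the $u_n$-component is the product of a strongly $L^2$-convergent factor ($\mathbf M_{u_n}^{(\omega)}$) and a weakly $L^2$-convergent factor ($u_n$), whose pairing against $\chi_{[0,t]}\in L^\infty$ converges. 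Hence $\widetilde{\mathbf M}^{(\omega)}$ solves the Bloch equation with control $u$, and uniqueness identifies it with $\mathbf M_u^{(\omega)}$; the full sequence converges since the limit does not depend on the subsequence. Property \ref{enm:compactness} then follows immediately because $Y = (\R^3)^J$ is finite-dimensional and $S$ maps $U$ into the bounded set $\{y : \abs{y}\leq \sqrt{J}\}$.

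For \ref{enm:FrechetDifferentiability}, take as candidate derivative the map $\phi \mapsto (\delta\mathbf M_\phi^{(\omega_1)}(T),\ldots,\delta\mathbf M_\phi^{(\omega_J)}(T))$ defined by the linearized system \eqref{eq:BlochFrechetEquation}; linearity and boundedness $\abs{S'(u)\phi}\leq C\norm{\phi}_{L^2}$ follow from the same cross-product trick as above. The error $\mathbf R^{(\omega)} := \mathbf M_{u+\phi}^{(\omega)} - \mathbf M_u^{(\omega)} - \delta\mathbf M_\phi^{(\omega)}$ satisfies
\begin{equation}
    \dot{\mathbf R}^{(\omega)} = \mathbf R^{(\omega)}\times\mathbf B_u^{(\omega)} + (\mathbf M_{u+\phi}^{(\omega)}-\mathbf M_u^{(\omega)})\times\mathbf B_\phi^{0},\qquad \mathbf R^{(\omega)}(0)=0,
\end{equation}
so $\frac{d}{dt}\abs{\mathbf R^{(\omega)}}^2 = 2\mathbf R^{(\omega)}\cdot((\mathbf M_{u+\phi}^{(\omega)}-\mathbf M_u^{(\omega)})\times\mathbf B_\phi^{0})$, the skew term vanishing. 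A preliminary Lipschitz estimate, obtained in the same way from $\mathbf D^{(\omega)} := \mathbf M_{u+\phi}^{(\omega)}-\mathbf M_u^{(\omega)}$, gives $\norm{\mathbf D^{(\omega)}}_{C([0,T])} \leq \sqrt{T}\norm{\phi}_{L^2}$. Feeding this back yields $\abs{\mathbf R^{(\omega)}(T)} \leq \norm{\mathbf D^{(\omega)}}_{\infty}\int_0^T \abs{\mathbf B_\phi^{0}}\,\dd t \leq T\norm{\phi}_{L^2}^2$, which is $o(\norm{\phi}_{L^2})$ and establishes Fréchet differentiability.

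The only real work is the bookkeeping in the Gronwall-style estimates, which is standard; the conceptual content lies entirely in step two, where one must reconcile weak convergence of the control with the product structure of the nonlinearity, and this is resolved by the compactness embedding $H^1\hookrightarrow C([0,T])$.
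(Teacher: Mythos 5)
Your proof is correct and takes a genuinely different route from the paper's in the two technical steps.

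For weak-to-weak continuity, the paper works directly with a Gronwall-type estimate \eqref{eq:Gronwall} and reduces the question to showing that $f_i(r) := \int_0^r \Delta B_i(s)\,\dd s$ tends to zero in $L^2(0,T)$; it then verifies this via a double-integral expansion of $\|f_i\|_{L^2}^2$ and the weak convergence of $\Delta B_i$. You instead use the $H^1(0,T)\hookrightarrow C([0,T])$ compact embedding (via $|\mathbf M_{u_n}^{(\omega)}|=1$ and $|\dot{\mathbf M}_{u_n}^{(\omega)}|\le|\mathbf B_{u_n}^{(\omega)}|$, uniformly bounded in $L^2$), extract a uniformly convergent subsequence, and pass to the limit in the integral form of the Bloch equation by pairing the strongly-convergent magnetization factor against the weakly-convergent control; uniqueness of solutions and the subsequence principle conclude. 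This avoids the auxiliary $L^2$-convergence lemma entirely and is arguably more robust. For Fréchet differentiability, the paper nests two Gronwall estimates and produces constants depending on $\|u\|_U$. You instead exploit skew-symmetry to kill the dominant term in $\frac{\dd}{\dd t}|\mathbf R^{(\omega)}|^2$, leaving only the lower-order driving term, which gives the clean estimate $|\mathbf R^{(\omega)}(T)|\le T\|\phi\|_{L^2}^2$ with explicit constants and no exponential factors. Both of your substitutions buy a shorter, more transparent argument; what the paper's Gronwall route buys is a template it can reuse (with minor adaptations) in \cref{prop:blochAdjointContinuity} for the uniform-in-$t$ convergence needed for \ref{enm:adjointConvergence}, which your compactness argument also supplies but which the paper develops by strengthening the Gronwall estimate. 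Well-definedness and the dispatch of \ref{enm:compactness} via finite-dimensional range coincide with the paper.
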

\begin{proof}
    Introducing the skew-symmetric matrix
    \begin{equation}
    B_u^\omega(t)=\left(\begin{smallmatrix}0&\omega&-(u(t))_2\\-\omega&0&(u(t))_1\\(u(t))_2&-(u(t))_1&0\end{smallmatrix}\right)\,,
    \end{equation}
    the homogeneous linear Bloch equation $\frac{\dd}{\dd t}{\mathbf{M}}^{(\omega)}_u(t) = B_u^\omega(t)\mathbf{M}^{(\omega)}_u(t)$ for a control $u(t)\in\R^2$ has a solution $\mathbf{M}^{(\omega)}_u(t)$ by Carathéodory's existence theorem. 
    Furthermore, 
    \begin{equation}
        \frac{\dd}{\dd t}|\mathbf{M}^{(\omega)}_u(t)|_2^2=2\mathbf{M}^{(\omega)}_u(t)\cdot\frac{\dd}{\dd t}{\mathbf{M}}^{(\omega)}_u(t)=0\,,
    \end{equation}
    and thus $|\mathbf{M}^{(\omega)}_u(t)|_2=1$ for all $t$.
    Now let $u_i\rightharpoonup u$ weakly in $L^2(\Omega;\R^2)$. Then,
    \begin{equation}
        \left\{\begin{aligned}
                \frac{\dd}{\dd t}{\mathbf{M}}^{(\omega)}_{u_i}(t)-\frac{\dd}{\dd t}{\mathbf{M}}^{(\omega)}_{u}(t)
                &=B_{u_i}^\omega(t)\left(\mathbf{M}^{(\omega)}_{u_i}(t)-\mathbf{M}^{(\omega)}_{u}(t)\right)
                +(B_{u_i}^\omega(t)-B_u^\omega(t))\mathbf{M}^{(\omega)}_{u}(t)\,,\quad t\in[0,T],\\
                \mathbf{M}^{(\omega)}_{u_i}(0) &= \mathbf{M}^{(\omega)}_{u}(0)\,.
        \end{aligned}\right.
    \end{equation}
    Upon abbreviating $\Delta M_i=\mathbf{M}^{(\omega)}_{u_i}-\mathbf{M}^{(\omega)}_{u}$ and $\Delta B_i=(B_{u_i}^\omega-B_u^\omega)\mathbf{M}^{(\omega)}_{u}$ and integrating from $0$ to $t$, we arrive at
    \begin{equation}
        \begin{aligned}
            |\Delta M_i(t)|_2
            &=\Big|\int_0^tB_{u_i}^\omega(s)\Delta M_i(s)\,\dd s+\int_0^t\Delta B_i(s)\,\dd s\,\Big|_2\\
            &\leq\int_0^t|B_{u_i}^\omega(s)|_2|\Delta M_i(s)|_2\,\dd s+\Big|\int_0^t\Delta B_i(s)\,\dd s\, \Big|_2\,.
        \end{aligned}
    \end{equation}
    Gronwall's inequality now implies that
    \begin{equation}\label{eq:Gronwall}
        \left|\Delta M_i(t)\right|_2
        \leq\Big|\int_0^t\Delta B_i(s)\,\dd s\,\Big|_2
        +\int_0^t\Big|\int_0^r\Delta B_i(s)\,\dd s\,\Big|_2|B_{u_i}^\omega(r)|_2\exp\left(\int_r^t|B_{u_i}^\omega(s)|_2\,\dd s\right)\,\dd r\,.
    \end{equation}
    The first term converges to zero due to $\Delta B_i\rightharpoonup0$ in $L^2(\Omega;\R^{3})$ (since $\mathbf{M}^{(\omega)}_{u}\in L^\infty(\Omega;\R^3)$).
    Additionally, the exponential is bounded by $\exp\left(\sqrt T\|B_{u_i}^\omega\|_{L^2(\Omega;\R^{3\times3})}\right)\leq C\in\R$ independent of $i$.
    Thus, the right-hand side converges to zero if
    \begin{equation}
        f_i\to0\text{ in }L^2(\Omega;\R)
        \qquad\text{for}\qquad
        f_i:\Omega\to\R,\quad r\mapsto\int_0^r\Delta B_i(s)\,\dd s\,.
    \end{equation}
    This is indeed the case since 
    \begin{equation}
        \|f_i\|_{L^2(\Omega)}^2=\int_{\{s\in(0,T)^3:s_1,s_2\leq s_3\}}\Delta B_i(s_1)\cdot\Delta B_i(s_2)\,\dd s
    \end{equation}
    and $s\mapsto \Delta B_i(s_1)\cdot\Delta B_i(s_2)$ converges weakly to zero in $L^2((0,T)^3;\R)$. 
    Thus $\mathbf M^{(\omega_j)}_{u_i}(T)$ converges for all $j$, and therefore $S(u_i)\to S(u)$.
    This argument also implies uniqueness of the solution.

    Moreover, $S$ is Fréchet-differentiable, and its derivative at $u\in L^2(\Omega;\R^2)$ is given by
    \begin{equation}
        S'(u):U\to Y,\quad
        \phi\mapsto \delta\mathbf M_\phi(T)=(\delta\mathbf M_\phi^{(\omega_1)}(T),\ldots,\delta\mathbf M_\phi^{(\omega_J)}(T))
        \quad
    \end{equation}
    with $\delta\mathbf M_\phi^{(\omega)}$ solving the linearized state equation (note $\partial_u(B_u^\omega)(\phi)=B_\phi^0$)
    \begin{equation}\label{eq:BlochFrechetEquation}
        \left\{\begin{aligned}
                \tfrac{\dd}{\dd t} \delta\mathbf M_\phi^{(\omega)}(t)&=B_{u}^{\omega}(t)\delta\mathbf M_\phi^{(\omega)}(t)+B_\phi^0(t)\mathbf M^{(\omega)}_{u}(t)\,,\qquad t\in[0,T],\\
                \delta\mathbf M_\phi^{(\omega)}(0)&=(0,0,0)^T\,.
        \end{aligned}\right.
    \end{equation}
    Indeed, $\delta\mathbf M_\phi(T)$ is obviously linear in $\phi$, and the unique solvability follows just like for $\mathbf M_u^{(\omega)}$.
    Furthermore, for any $\tilde u\in U$ with $\|\tilde u-u\|_{U}\leq1$ and $\phi=\tilde u-u$ we have
    \begin{equation}
        \tfrac\dd{\dd t}(\mathbf{M}^{(\omega)}_{\tilde u}-\mathbf{M}^{(\omega)}_{u}-\delta\mathbf M_\phi^{(\omega)})
        =B_{\tilde u}^{\omega}(\mathbf{M}^{(\omega)}_{\tilde u}-\mathbf{M}^{(\omega)}_{u}-\delta\mathbf M_\phi^{(\omega)})
        +(B_{\tilde u}^{\omega}-B_{u}^{\omega})\delta\mathbf M_\phi^{(\omega)}
    \end{equation}
    with zero initial condition.
    Gronwall estimates analogous to \eqref{eq:Gronwall} (now for $\delta\mathbf M_\phi^{(\omega)}$ and $\mathbf{M}^{(\omega)}_{\tilde u}-\mathbf{M}^{(\omega)}_{u}-\delta\mathbf M_\phi^{(\omega)}$, exploiting that $|B_{\tilde u}^\omega(r)|_2\exp\left(\int_r^t|B_{\tilde u}^\omega(s)|_2\,\dd s\right)$ is bounded by a constant only depending on $\|u\|_{U}$) imply that
    \begin{equation}
        |\delta\mathbf M_\phi^{(\omega)}(t)|_2\leq\tilde C\|B_{\tilde u}^{\omega}-B_{u}^{\omega}\|_{L^2(\Omega;\R^{3\times 3})}\leq2\tilde C\|\tilde u-u\|_{U}
    \end{equation}
    for a constant $\tilde C>0$ and all $t\in\Omega$ as well as
    \begin{equation}
        \begin{aligned}
            |\mathbf{M}^{(\omega)}_{\tilde u}(T)-\mathbf{M}^{(\omega)}_{u}(T)-\delta\mathbf M_\phi^{(\omega)}(T)|_2
            &\leq C\sup_{t\in\Omega}\Big|\int_0^t(B_{\tilde u}^{\omega}(s)-B_{u}^{\omega}(s))\delta\mathbf M_\phi^{(\omega)}(s)\,\dd s\,\Big|_2\\
            &\leq C\|B_{\tilde u}^{\omega}-B_{u}^{\omega}\|_{L^1(\Omega;\R^{3\times 3})}\|\delta\mathbf M_\phi^{(\omega)}\|_{L^\infty(\Omega;\R^3)}\\
            &\leq C\|\tilde u-u\|_{U}^2\,,
        \end{aligned}
    \end{equation}
    where $C$ denotes a positive constant (not necessarily the same in all inequalities).
    We thus have 
    \begin{equation}
        |S(\tilde u)-S(u)-S'(u)(\tilde u-u)|_2\leq C\|\tilde u-u\|_{U}^2 
    \end{equation}
    as required.

    The compactness follows from the finite dimensionality of $\ran S$.
\end{proof}

We will also require some regularity results for the adjoint operator $S'(u)^*$.
\begin{proposition}\label{prop:bloch_adjoint}
    For $S$ from \eqref{eq:settingBloch} and $u\in U$ we have
    \begin{equation}
        S'(u)^*:Y\to U\,,\quad
    (S'(u)^*y)(t)=\sum_{j=1}^J\left(\begin{smallmatrix}0&\left(\mathbf M_u^{(\omega_j)}(t)\right)_3&-\left(\mathbf M_u^{(\omega_j)}(t)\right)_2\\-\left(\mathbf M_u^{(\omega_j)}(t)\right)_3&0&\left(\mathbf M_u^{(\omega_j)}(t)\right)_1\end{smallmatrix}\right)\mathbf{\Psi}_{u,j}(t)\,,
    \end{equation}
    where $\mathbf{\Psi}_{u,j}$ solves the adjoint equation
    \begin{equation}
        \frac{\dd}{\dd t}\mathbf{\Psi}_{u,j}(t)=\mathbf{\Psi}_{u,j}(t)\times\mathbf B_u^{(\omega_j)}(t)\,,\quad
        \mathbf{\Psi}_{u,j}(T)=y_j\,,\quad
        j=1,\ldots,J\,.
    \end{equation}
\end{proposition}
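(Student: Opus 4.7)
The plan is to run the standard Lagrangian/duality argument for the adjoint of a solution operator of a linear ODE. From the proof of \cref{thm:operatorProperties} we know that $S'(u)\phi=(\delta\mathbf M_\phi^{(\omega_1)}(T),\ldots,\delta\mathbf M_\phi^{(\omega_J)}(T))$ with each $\delta\mathbf M_\phi^{(\omega_j)}$ solving the linearized equation \eqref{eq:BlochFrechetEquation}, and the duality pairing on $Y=(\R^3)^J$ reads $\langle S'(u)\phi,y\rangle_Y=\sum_{j=1}^J\delta\mathbf M_\phi^{(\omega_j)}(T)\cdot y_j$. The goal is to rewrite this as $\int_0^T\phi(t)\cdot v(t)\,\dd t$ and identify $v=S'(u)^*y$.

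First I would check that $\mathbf\Psi_{u,j}$ is well-defined. Using $\mathbf a\times\mathbf B_u^{(\omega)}=B_u^\omega\mathbf a$ for the skew-symmetric matrix $B_u^\omega$ introduced in the proof of \cref{thm:operatorProperties}, the adjoint equation is equivalent to the linear ODE $\tfrac{\dd}{\dd t}\mathbf\Psi_{u,j}=B_u^{\omega_j}\mathbf\Psi_{u,j}$ with terminal datum $\mathbf\Psi_{u,j}(T)=y_j$. Since $B_u^{\omega_j}$ has $L^2$-in-$t$ entries, Carathéodory existence and uniqueness apply just as for $\mathbf M_u^{(\omega)}$; skew-symmetry of $B_u^{\omega_j}$ further implies $|\mathbf\Psi_{u,j}(t)|_2\equiv|y_j|_2$, so $\mathbf\Psi_{u,j}\in L^\infty([0,T];\R^3)$.

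The core step is to differentiate $t\mapsto\delta\mathbf M_\phi^{(\omega_j)}(t)\cdot\mathbf\Psi_{u,j}(t)$. Combining \eqref{eq:BlochFrechetEquation} with the adjoint equation yields
\begin{equation}
\tfrac{\dd}{\dd t}\bigl(\delta\mathbf M_\phi^{(\omega_j)}\cdot\mathbf\Psi_{u,j}\bigr)
=\bigl(B_u^{\omega_j}\delta\mathbf M_\phi^{(\omega_j)}+B_\phi^0\mathbf M_u^{(\omega_j)}\bigr)\cdot\mathbf\Psi_{u,j}+\delta\mathbf M_\phi^{(\omega_j)}\cdot B_u^{\omega_j}\mathbf\Psi_{u,j},
\end{equation}
and the two $B_u^{\omega_j}$ contributions cancel by skew-symmetry. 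Integrating from $0$ to $T$ and using $\delta\mathbf M_\phi^{(\omega_j)}(0)=0$, $\mathbf\Psi_{u,j}(T)=y_j$, I obtain
\begin{equation}
\delta\mathbf M_\phi^{(\omega_j)}(T)\cdot y_j=\int_0^T\bigl(B_\phi^0(t)\mathbf M_u^{(\omega_j)}(t)\bigr)\cdot\mathbf\Psi_{u,j}(t)\,\dd t.
\end{equation}

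To finish, an explicit expansion of $B_\phi^0$ (linear in $\phi_1,\phi_2$) rewrites $(B_\phi^0\mathbf M)\cdot\mathbf\Psi=\phi\cdot A(\mathbf M)\mathbf\Psi$ with $A(\mathbf M)=\bigl(\begin{smallmatrix}0&M_3&-M_2\\-M_3&0&M_1\end{smallmatrix}\bigr)$, so summing over $j$ gives $\langle S'(u)\phi,y\rangle_Y=\int_0^T\phi(t)\cdot v(t)\,\dd t$ with $v$ equal to the right-hand side of the claimed identity; the $L^\infty$-bounds on $\mathbf M_u^{(\omega_j)}$ and $\mathbf\Psi_{u,j}$ ensure $v\in L^\infty\subset U$. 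No genuine obstacle is expected; the only points requiring care are the sign in the identification $\mathbf a\times\mathbf B_u^{(\omega)}=B_u^\omega\mathbf a$ and the algebraic rearrangement of $B_\phi^0\mathbf M$ into the matrix $A(\mathbf M)$ acting on $\mathbf\Psi$, both of which are routine.
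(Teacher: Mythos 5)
Your proof is correct and takes essentially the same approach as the paper: pair $\delta\mathbf M_\phi^{(\omega_j)}$ with the backward-solved $\mathbf\Psi_{u,j}$, differentiate their inner product, cancel the state-equation contribution, and read off the adjoint from the remaining term. The only cosmetic difference is that you carry out the cancellation via skew-symmetry of the matrix $B_u^{\omega_j}$, whereas the paper uses the equivalent scalar-triple-product identity for the cross product; your additional remark on existence of $\mathbf\Psi_{u,j}$ and its $L^\infty$ bound is a reasonable (though routine) supplement to what the paper leaves implicit.
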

\begin{proof}
    From \cref{thm:operatorProperties} we have   $S'(u)\phi=(\delta\mathbf M_\phi^{(\omega_1)}(T),\ldots,\delta\mathbf M_\phi^{(\omega_J)}(T))$ for any $u,\phi\in U$ with $\delta\mathbf M_\phi^{(\omega)}$ solving \eqref{eq:BlochFrechetEquation}.
    Thus we obtain for $y\in (\R^3)^J$ that
    \begin{equation}
        \begin{aligned}
            \int_\Omega \phi(t)\cdot(S'(u)^* y)(t)\,\dd t
            =\scalprod{ y,S'(u)\phi}
            &=\sum_{j=1}^J y_j^T\delta\mathbf M_\phi^{(\omega_j)}(T)
            =\sum_{j=1}^J\mathbf{\Psi}_{u,j}(T)^T\delta\mathbf M_\phi^{(\omega_j)}(T)\\
            &=\sum_{j=1}^J\int_\Omega\mathbf{\Psi}_{u,j}(t)^T\frac{\dd}{\dd t} \delta\mathbf M_\phi^{(\omega_j)}(t)+\frac{\dd}{\dd t}\mathbf{\Psi}_{u,j}(t)^T\delta\mathbf M_\phi^{(\omega_j)}(t)\,\dd t\\
            &=\sum_{j=1}^J\int_\Omega\mathbf{\Psi}_{u,j}(t)^T\left[\frac{\dd}{\dd t} \delta\mathbf M_\phi^{(\omega_j)}(t)-\delta\mathbf M_\phi^{(\omega_j)}(t)\times\mathbf B_u^{\omega_j}(t)\right]\,\dd t\\
            & =\sum_{j=1}^J\int_\Omega\mathbf{\Psi}_{u,j}(t)^T\left[B_\phi^0(t)\mathbf M_u^{(\omega_j)}(t)\right]\,\dd t\,,
        \end{aligned}
    \end{equation}
    from which the result follows.
\end{proof}

\begin{proposition}\label{prop:blochAdjointContinuity}
    For any $u\in U$, we have  $\ran S'(u)^*\hookrightarrow L^\infty(\Omega;\R^2)$.
    Moreover, $u\mapsto S'(u)^*$ is continuous in $L^\infty(\Omega;\R^2)$ under weak convergence of $u$ in $U$, thus it satisfies \ref{enm:adjointConvergence} with $V=L^1(\Omega;\R^2)$.
\end{proposition}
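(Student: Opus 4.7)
The plan is to build on the explicit formula for $S'(u)^*$ from \cref{prop:bloch_adjoint} and on the Gronwall-type estimates already developed in the proof of \cref{thm:operatorProperties}. The $L^\infty$ bound is essentially a conservation-of-norm argument: as in the proof of \cref{thm:operatorProperties} one checks that $|\mathbf{M}_u^{(\omega)}(t)|_2\equiv1$ by differentiating $|\mathbf{M}_u^{(\omega)}|_2^2$, and by exactly the same computation (the adjoint equation $\frac{\dd}{\dd t}\mathbf{\Psi}_{u,j}=\mathbf{\Psi}_{u,j}\times\mathbf{B}_u^{(\omega_j)}$ has the same skew-symmetric structure) one obtains $|\mathbf{\Psi}_{u,j}(t)|_2\equiv|y_j|_2$. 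Since the matrix coefficient of $\mathbf{\Psi}_{u,j}(t)$ in the formula for $S'(u)^*y$ has entries bounded by $|\mathbf{M}_u^{(\omega_j)}(t)|_2=1$, one concludes $\|S'(u)^*y\|_{L^\infty(\Omega;\R^2)}\leq CJ\max_j|y_j|_2$, which gives the desired embedding $\ran S'(u)^*\hookrightarrow L^\infty(\Omega;\R^2)$.

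For the weak-to-strong continuity, let $\tilde u\rightharpoonup u$ in $U$. The first step is to upgrade the convergence $\mathbf{M}_{\tilde u}^{(\omega_j)}\to\mathbf{M}_u^{(\omega_j)}$ obtained in \cref{thm:operatorProperties} from pointwise (at $t=T$) to uniform on $[0,T]$. Inspecting the Gronwall bound \eqref{eq:Gronwall}, the right-hand side depends on $t$ only through $\int_0^t\Delta B(s)\,\dd s$ and an integral of bounded factors, both of which are equicontinuous in $t$; since the map $r\mapsto\int_0^r\Delta B(s)\,\dd s$ is compact from $L^2(\Omega;\R^{3\times3})$ to $C^0([0,T];\R^{3\times3})$ (Cauchy--Schwarz gives equicontinuity, Arzelà--Ascoli compactness), weak convergence $\Delta B\rightharpoonup 0$ in $L^2$ implies uniform convergence to $0$. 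Hence $\mathbf{M}_{\tilde u}^{(\omega_j)}\to\mathbf{M}_u^{(\omega_j)}$ in $L^\infty(\Omega;\R^3)$.

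Because the adjoint equation for $\mathbf{\Psi}_{u,j}$ has exactly the same bilinear dependence on $u$ as the state equation (just solved backwards from the fixed terminal datum $y_j$), the identical Gronwall argument yields $\mathbf{\Psi}_{\tilde u,j}\to\mathbf{\Psi}_{u,j}$ in $L^\infty(\Omega;\R^3)$. Substituting these $L^\infty$ convergences into the formula of \cref{prop:bloch_adjoint} and using that the matrix and the vector factor are uniformly bounded, a straightforward product estimate gives $\|S'(\tilde u)^*y-S'(u)^*y\|_{L^\infty(\Omega;\R^2)}\to 0$ for every $y\in Y$. Taking $V=L^1(\Omega;\R^2)$, we have $U=L^2(\Omega;\R^2)\hookrightarrow V$ on the bounded interval $\Omega$ and $V^*=L^\infty(\Omega;\R^2)$, so hypothesis \ref{enm:adjointConvergence} is verified.

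The only slightly delicate step is the upgrade from pointwise to uniform convergence in the first displayed estimate; once the compactness of $r\mapsto\int_0^r(\cdot)\,\dd s$ from $L^2$ into $C^0$ is invoked, the remainder of the argument is a transcription of the proof of \cref{thm:operatorProperties} together with the uniform pointwise norm conservation for $\mathbf{M}$ and $\mathbf{\Psi}$.
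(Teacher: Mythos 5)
Your proof is correct, and it takes a genuinely different route for the crucial step. The paper establishes the uniform-in-$t$ convergence of $f_i:=\int_0^{\cdot}\Delta B_i(s)\,\dd s$ by invoking the Dunford--Pettis criterion (equi-integrability of $\Delta B_i$ from weak $L^1$-convergence) and then running a compactness/contradiction argument on the maximizing times $t_i$. You instead observe that the indefinite integral is a compact linear operator from $L^2(\Omega)$ into $C^0([0,T])$ (Cauchy--Schwarz gives uniform $\tfrac12$-Hölder equicontinuity on $L^2$-bounded sets, then Arzelà--Ascoli), and a compact operator turns weak convergence into strong convergence, so $\Delta B_i\rightharpoonup 0$ in $L^2$ immediately yields $f_i\to 0$ in $C^0$. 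Both arguments are valid; yours is more abstract and shorter, while the paper's is more elementary and self-contained. You also supply an explicit proof of the embedding $\ran S'(u)^*\hookrightarrow L^\infty(\Omega;\R^2)$ via the pointwise norm conservation $|\mathbf{M}_u^{(\omega_j)}(t)|_2\equiv 1$ and $|\mathbf{\Psi}_{u,j}(t)|_2\equiv|y_j|_2$ (both following from the skew-symmetric structure), whereas the paper treats this claim as immediate from the formula in \cref{prop:bloch_adjoint} and moves straight to the continuity statement. The remaining steps — reducing to $L^\infty$-convergence of $\mathbf{M}_{u_i}^{(\omega_j)}$ and $\mathbf{\Psi}_{u_i,j}$, bounding the second Gronwall term by its value at $t=T$, and transferring the argument from the state to the adjoint via the identical bilinear structure — coincide with the paper's.
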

\begin{proof}
    By the formula for $S'(u)^*$ from \cref{prop:bloch_adjoint}, it is enough to show that $\mathbf M_{u_i}^{(\omega_j)}$ and $\mathbf{\Psi}_{u_i,j}$ converge in $L^\infty(\Omega;\R^3)$ as $u_i\rightharpoonup u$ in $U$.
    It suffices to consider $\mathbf M_{u_i}^{(\omega_j)}$, since the adjoint variable $\mathbf{\Psi}_{u,j}$ satisfies the same differential equation.
    Thus, we only have to show that the right-hand side in \eqref{eq:Gronwall} converges to zero uniformly in $t$.
    Note that the second integral is bounded above by the one for $t=T$ which has already been shown to converge to zero. Hence it suffices to show $\int_0^t\Delta B_i(s)\,\dd s\to0$ uniformly in $t$ as $i\to\infty$.
    Since $\Delta B_i\rightharpoonup0$ in $L^2(\Omega;\R^3)$, we also have weak convergence in $L^1(\Omega;\R^3)$ so that by the Dunford--Pettis criterion the $\Delta B_i$ are equi-integrable.
    Now let $t_i\in[0,T]$ be such that $\big|\int_0^{t_i}\Delta B_i(s)\,\dd s\,\big|_2\geq\sup_{t\in[0,T]}\big|\int_0^{t}\Delta B_i(s)\,\dd s\,\big|_2-\frac1i$,
    and assume that for a subsequence (still indexed by $i$) we have $\big|\int_0^{t_i}\Delta B_i(s)\,\dd s\,\big|_2\geq C>0$ for all $i$.
    Upon taking another subsequence, we can further assume that $t_i\to\hat t\in[0,T]$.
    Due to the equi-integrability, there is a $\Delta t>0$ such that $\int_{\hat t-\Delta t}^{\hat t+\Delta t}|\Delta B_i(s)|_2\,\dd s<C/2$;
    thus for $i$ large enough we have $\big|\int_0^{\hat t}\Delta B_i(s)\,\dd s\,\big|_2\geq C/2$.
    However, this contradicts the weak convergence of $\Delta B_i$ to $0$ so that indeed $\int_0^t\Delta B_i(s)\,\dd s\to0$ uniformly in $t$ as $i\to\infty$.
\end{proof}

\section*{Acknowledgments}

CC is supported by the German Science Fund (DFG) under grant CL 487/1-1.
CT gratefully acknowledges support by the DFG Research Training Group 2088 Project A1.
BW’s research was supported by the Alfried Krupp Prize for Young University Teachers awarded by the Alfried Krupp von Bohlen und Halbach-Stiftung.
The work was also supported by the Deutsche Forschungsgemeinschaft (DFG), Cells-in-Motion Cluster of Excellence (EXC1003 -- CiM), University of Münster, Germany. 

\printbibliography

\end{document}